\documentclass{article}
\usepackage{amsmath,amssymb,amsfonts,amsthm}
\usepackage{bm,graphicx,comment,color}
\usepackage{algorithmic,algorithm}
\usepackage{mathtools}
\usepackage{mathrsfs}
\usepackage{amsthm}

\usepackage{comment}
\usepackage{subfigure} 
\usepackage{ulem}
\usepackage{cancel}

\usepackage{multirow}
\usepackage{longtable}
\usepackage{multirow}
\usepackage{makecell}
\usepackage{xcolor}

\usepackage{ragged2e}\justifying\let\raggedright\justifying

\usepackage{graphicx,url,pgf}
\usepackage{color}
\usepackage{booktabs}
\usepackage[all]{xy}
\usepackage{latexsym}
\usepackage{graphics}
\usepackage{tabularx}
\usepackage{mathrsfs}
\usepackage{delarray}
\usepackage{subfig}
\usepackage{comment}

\numberwithin{equation}{section}
\newtheorem{theorem}{Theorem}[section]

\newtheorem{lemma}[theorem]{Lemma}

\newtheorem{remark}[theorem]{Remark}
\newtheorem{example}[theorem]{Example}

\newcommand{\W}{\mathcal W}
\newcommand{\w}{_{_{\mathcal W}}}

\newcommand{\EQT}{\mathcal E \mathcal Q \mathcal T}

\newcommand{\one}{{\bf 1}}

\usepackage[numbers,sort&compress]{natbib}

\begin{document}

\title{Theoretical analysis and numerical solution {of} a nonlinear vector equation}
\author{Yuezhi Wang\quad Gwi Soo Kim \quad Jie Meng}
\maketitle

\begin{abstract}
Theoretical and computational properties of a  vector equation 
$Ax-\|x\|_1x=b$  {are}  investigated, where {$A$} is an invertible $M$-matrix and $b$ is a nontrivial nonnegative vector. Specifically,  A is a structured $M$-matrix of the form $A=\alpha I_n-T_n(a)$
 with $T_n(a)$  being a Toeplitz matrix and $\alpha$ a specific scalar. Existence and uniqueness of a nonnegative solution is proved from a functional point of view, followed by the computation of the  solution and the perturbation analysis of the   equation. In particular,  
 a structure-preserving doubling algorithm (SDA) is analyzed and proved using a technique distinct from that for the Riccati equations. We show that SDA always applies and the convergence rate is at least 1/2. Numerical experiments are performed to demonstrate the effectiveness of the proposed algorithms. 
 
 \end{abstract}
 
\section{Introduction}
We are concerned with a vector equation
\begin{equation}\label{eq:equation1}
Ax-\|x\|_1x=b,
\end{equation}
where  $A\in \mathbb R^{n\times n}$ is an invertible $M$-matrix and $b\in \mathbb R^{n}$ is a nontrivial nonnegative vector. More precisely, the matrix $A$ can be written as $A=\alpha I_n-T_n(a)\in \mathbb R^{n\times n}$, where $T_n(a)$ is a Toeplitz matrix associated with $a(z)=\sum_{i=1-n}^{n-1}a_iz^i$ and $\alpha =2-\|a\|_{\w} $, where $\|a\|_{\w}:=\sum_{i=1-n}^{n-1}|a_i|$. Moreover, the vector $b$ satisfies $\|b\|_1<(1-\|a\|_{\w})^2$. In the infinite dimensional case,  this equation is  encountered in the numerical treatment  of the square root of an infinite $M$-matrix, which has the form $G+\one u^T$, where $G$ is a quasi-Toeplitz $M$-matrix and $u$ is an infinite vector with finite 1-norm. We refer the reader to \cite{arxiv_jie} for more details.

Quasi-Toeplitz matrices which can be written as the sum of a Toeplitz matrix and a low-rank correction matrix are ubiquitous in  Markov processes with infinite levels and phases \cite{motyer-taylor}. Recently, semi-infinite quasi-Toeplitz matrices of the form $A=G+{\bf 1}v_A^T$, where $G$ is a semi-infinite quasi-Toeplitz matrix and $v_A\in \ell^1$,   have been treated in some recent papers, see for instance \cite{BSM1,BMML2020,BSR,BMM,kim_meng}, due to their various applications such as  in the analysis of two-dimensional random walks in the quarter plane. One  subclass of the quasi-Toeplitz matrices is the $M$-matrices with a quasi-Toeplitz structure, which have been investigated in \cite{ckm,laa_jie,arxiv_jie}  in terms of their theoretical and computational properties.

  An semi-infinite quasi-Toeplitz $M$-matrix which is of interest usually has the form  $W=T_W+E_W+\one v_W^T$,   where $T_W$ is a semi-infinite Toeplitz matrix, $E_W$ is a low-rank matrix, and $v_W\in \ell^1$. It has been proved in \cite{arxiv_jie} that under mild conditions, matrix $W$ admits a unique square root $G=T_G+E_G+\one v_G^T $  that is also an semi-infinite quasi-Toeplitz $M$-matrix. In \cite{eqtexample}, invertible $M$-matrices of the form $W=T(w)+\one v_W^T$ are encountered, where $T(w)$ is an upper triangular quasi-Toeplitz $M$-matrix  associated with $w(z)=\sum_{i=0}^{\infty}w_iz^i$ and $v_W\in \ell^1$. Then in  \cite{arxiv_jie}, it was shown that matrix  $W$ admits a square root $G=T(g)+\one v_G^T$, where  $v_G$ solves an equation of the form \eqref{eq:equation1} with  semi-infinite coefficients. 
 In this paper, we show that the vector $v_G$ can be approximated by a vector  $\widehat{v}_G=(v^T,0)^T$,  where $v\in \mathbb R^p$ for sufficiently large $p$, and solves an equation that can be formalized as equation \eqref{eq:equation1}. This way, the computation of the vector $\widehat{v}_G$ is transformed to solve a finite-dimensional vector equation \eqref{eq:equation1}.

We propose a framework for the numerical treatment of the  finite-dimensional vector equation \eqref{eq:equation1}. In particular, we observe that equation \eqref{eq:equation1} has  a solution $x=(A-\mu_xI)^{-1}b$, where the matrix $A$ is a finite Toeplitz $M$-matrix,  $\mu_x$  satisfies  $\mu_x=\|(A-\mu_x I)^{-1}b\|_1$, from which we deduce a function $f(\mu)=\|(A-\mu  I)^{-1}b\|_1$ and  $\mu_x$ is a fixed point of $f(\mu )$.  The existence and uniqueness of a solution to equation \eqref{eq:equation1} can be obtained in terms of the existence of a fixed point of the function {$f(\mu)$}.  On the other hand, when the coefficient matrices of equation \eqref{eq:equation1} are of infinite size, two fixed-point iterations are proposed for the numerical treatment of the required solution \cite{arxiv_jie}. Instead of adjusting the fixed-point iterations to the finite-dimensional case, we propose new iterations, including a relaxed fixed-point iteration and Newton's iteration, to find a fixed point $\mu_x$ of  $f(\mu)$, followed by setting the solution of equation \eqref{eq:equation1} to $x=(A-\mu_xI)^{-1}b$.  Moreover, we show that a structure-preserving doubling algorithm can be applied, and the convergence is at least linear with rate 1/2.  Finally, we provide an upper perturbation bound for the solution.

This paper is organized as follows.  In the rest of this section, we recall some definitions and key properties related to semi-infinite quasi-Toeplitz $M$-matrices. In Section 2 we show how equation \eqref{eq:equation1} is obtained, followed by a proof of the existence and uniqueness of the solution of interest. In Section 3, we propose algorithms designed to compute the required solution, and in Section 4, we give an upper perturbation bound of the solution, concerning perturbations in the coefficient matrix $A$ and the vector $b$. Finally, in Section 5, we demonstrate the effectiveness of the proposed algorithms through numerical experiments. 

\subsection{Preliminary Concepts}
Let $a(z)=\sum_{i\in \mathbb Z}a_iz^i$  be a complex valued  function on the unit circle $\mathbb T$, a semi-infinite Toeplitz matrix $T(a)$ is said to be associated with a symbol $a(z)$   if  $(T(a))_{i,j}=a_{j-i}$. In the infinite case,  $a(z)$ belongs to the Wiener algebra $\W$, which is defined as $\mathcal W:=\{a(z): z\in \mathbb T,  \|a\|_{\w }:=\sum_{i\in \mathbb Z}|a_i|<\infty\}$, so that with $\|T(a)\|_{\infty}=\|a\|_{\w }$  the operator $T(a)$ is a bounded linear operator from $\ell^{\infty}$ to itself \cite{AB}. 
For an $n\times n$ Toeplitz matrix $T_n(a)$, we say that $T_n(a)$ is associated with a symbol  $a(z)=\sum_{k=1-n}^{n-1}a_kz^k$ in the sense that   $(T(a))_{i,j}=a_{j-i}$ for $1\leq i,j\leq n$. 
In this paper, we consider Toeplitz matrices with real elements, and when referring to $T(a) $ and $T_n(a)$, we assume the function $a(z)$ has only real coefficients.

Finite $M$-matrices are a subset of the real square matrices whose off-diagonal elements are nonpositive. In particular,   according to \cite[Definition 6.17]{higham}, 
an $n\times n$ real matrix $A$ is an $M$-matrix  if 
\begin{align*}
A=sI -B,\quad {\rm with }\ B\geq 0\quad {\rm and}\ s\geq \rho(B),
\end{align*}
where $\rho(B)$ is the spectral radius of $B$. If $s>\rho(B)$, then $A$ is an invertible $M$-matrix.

  Semi-infinite quasi-Toeplitz $M$-matrices are a subset of the semi-infinite quasi-Toeplitz matrices, the latter  can be written as
 \begin{equation}\label{eqt}
{{A=T(a)+E+\one v^T,}} 
\end{equation}
where ${{a(z)}} \in \mathcal W$, $E=(e_{i,j})_{i,j\in \mathbb Z^+}$ is such that $\lim_{i\rightarrow \infty}\sum_{j=1}^{\infty}|e_{i,j}|=0$, and $v=(v_1,v_2,\ldots)^T\in \ell^1$. This way, the matrix ${{A}}$ is a bounded linear operator from $\ell^{\infty}$ to itself \cite{BMML2020}.  Matrices with format \eqref{eqt} form a set $\EQT_{\infty}$,  which has been proved to be a Banach algebra \cite{BMML2020}. 
On the other hand, it has been deduced in \cite{arxiv_jie} that the characterization  of quasi-Toeplitz $M$-matrices is formally identical to finite $M$-matrices, that is, 
 a semi-infinite quasi-Toeplitz matrix ${{A}} \in \EQT_{\infty}$ is an infinite $M$-matrix   if it  can be expressed as ${{A=\beta I-A_1}} $  with  ${{A_1}}\geq 0$ and $\rho({{A_1}} )\leq \beta$. Moreover, if $\rho({{A_1}} )<\beta$, then ${{A}} $ is an invertible quasi-Toeplitz $M$-matrix.   We mention that for general infinite $M$-matrices and $M$-operators in a real Banach space,  the characterization is typically more complicated  and may depend more on the specific structure of the underlying space. We  refer the reader to \cite{AS,IM3,PN} for more details of definitions and properties of $M$-operators in a real Banach space.

Semi-infinite quasi-Toeplitz matrices and $M$-matrices possess many elegant properties due to the typical structure. We recall some of the properties that will be used in the subsequent analysis. 

\begin{lemma}\cite[Theorem 1.14]{AB}\label{lem:norm}
    If $a\in \mathcal W$, then $\|T(a)\|_1=\|T(a)\|_{\infty}=\|a\|_{\w}$. 
\end{lemma}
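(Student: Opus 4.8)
The plan is to reduce both operator norms to elementary computations with absolute row and column sums. Recall that an infinite matrix $M=(M_{i,j})_{i,j\ge1}$ defines a bounded operator on $\ell^\infty$ with $\|M\|_\infty=\sup_{i\ge1}\sum_{j\ge1}|M_{i,j}|$ as soon as that supremum is finite, and likewise a bounded operator on $\ell^1$ with $\|M\|_1=\sup_{j\ge1}\sum_{i\ge1}|M_{i,j}|$. In each case ``$\le$'' is the triangle inequality, while ``$\ge$'' follows by applying $M$ to suitable unit-ball elements: the sign pattern $x_j=\mathrm{sgn}(M_{i,j})$ of row $i$ gives $|(Mx)_i|=\sum_{j}|M_{i,j}|$ for the $\ell^\infty$ bound, and the coordinate vector $e_j$ gives $\|Me_j\|_1=\sum_{i}|M_{i,j}|$ for the $\ell^1$ bound; taking the supremum over $i$, respectively over $j$, yields the two identities. (Alternatively, one may take $\|T(a)\|_\infty=\|a\|_{\mathcal W}$ from the discussion preceding the lemma and deduce the $\ell^1$ statement by observing that $T(a)^{\top}=T(\tilde a)$ with $\tilde a(z)=a(z^{-1})$, so that $\|\tilde a\|_{\mathcal W}=\|a\|_{\mathcal W}$.)

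Applying this to $T(a)$ is then a matter of identifying which coefficients of $a$ occur in a given row or column. Since $(T(a))_{i,j}=a_{j-i}$, the $i$-th row of $T(a)$ consists exactly of the coefficients $a_k$ with $k\ge 1-i$, so
\begin{equation*}
\sum_{j\ge1}\bigl|(T(a))_{i,j}\bigr|=\sum_{k\ge 1-i}|a_k|=:s_i,
\end{equation*}
and the $j$-th column consists of the $a_k$ with $k\le j-1$, so
\begin{equation*}
\sum_{i\ge1}\bigl|(T(a))_{i,j}\bigr|=\sum_{k\le j-1}|a_k|=:t_j.
\end{equation*}
Both sequences $(s_i)_{i\ge1}$ and $(t_j)_{j\ge1}$ are nondecreasing and bounded above by $\|a\|_{\mathcal W}=\sum_{k\in\mathbb Z}|a_k|$, and since $a\in\mathcal W$ their tails $\|a\|_{\mathcal W}-s_i=\sum_{k<1-i}|a_k|$ and $\|a\|_{\mathcal W}-t_j=\sum_{k\ge j}|a_k|$ tend to $0$ as $i,j\to\infty$. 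Hence $\sup_{i}s_i=\sup_{j}t_j=\|a\|_{\mathcal W}$, which in particular is finite; substituting back into the two norm formulas gives $\|T(a)\|_1=\|T(a)\|_\infty=\|a\|_{\mathcal W}$, as claimed.

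The only genuinely delicate point is the ``$\ge$'' half of the operator-norm characterizations: that cancellation cannot beat the absolute row/column sum, and that the operator norm still equals the relevant supremum even though here it is approached only in the limit $i,j\to\infty$ and never attained by a single row or column. For $\ell^1$ this is immediate from testing against the coordinate vectors $e_j$; for $\ell^\infty$ one must check that the chosen sign vector indeed lies in the unit ball of $\ell^\infty$ (it does, trivially) and then pass to the supremum over rows. Everything else is bookkeeping of which $a_k$ appear in each row and column, together with the tail estimate for the Wiener class.
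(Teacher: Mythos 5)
Your proof is correct. Note that the paper does not prove this statement at all — it is quoted as Theorem 1.14 from B\"ottcher and Grudsky's book — so there is no in-paper argument to compare against; your computation (identifying the $i$-th row and $j$-th column absolute sums of $T(a)$ as partial sums $s_i=\sum_{k\ge 1-i}|a_k|$ and $t_j=\sum_{k\le j-1}|a_k|$, observing both increase to $\|a\|_{\mathcal W}$, and invoking the row-sum and column-sum characterizations of the $\ell^\infty$ and $\ell^1$ operator norms) is the standard elementary derivation, and your transpose shortcut via $T(a)^\top=T(\tilde a)$ with $\tilde a(z)=a(1/z)$ is a clean alternative. One remark worth keeping in mind: the fact that the supremum is approached only as $i,j\to\infty$ and never attained at a finite index is precisely why the finite sections satisfy only the inequality $\|T_n(a)\|_1\le\|a\|_{\mathcal W}$, which is what the paper actually uses immediately after the lemma in \eqref{norm1}.
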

For any $n\in \mathbb Z^+$, we denote by $T_n(a)$ the principal $n\times n$ section of $T(a)$.   In view of Lemma \ref{lem:norm}, we have $\|T_n(a)\|_1\le \|T(a)\|_1= \|a\|_{\w}$. Moreover, if $A:=sI-T(a)$ is an invertible $M$-matrix, then according to \cite[Theorem 2.13]{laa_jie}, we know that $A_n:=sI_n-T_n(a)$ is also an invertible $M$-matrix. This way, we obtain $A_n^{-1}\geq 0$ and if $\|a\|_{\w}<s$, we have 
\begin{align}\label{norm1}
\|A_n^{-1}\|_1\le \frac1s\sum_{i=0}^{\infty}\Big(\frac{\|T_n(a)\|_{1}}{s}\Big)^i
\le \frac1s\sum_{i=0}^{\infty}\Big(\frac{\|a\|_{\w}}{s}\Big)^i
=\frac{1}{s-\|a\|_{\w}}.
\end{align}

\begin{lemma}\cite[Corollary 1.11]{AB}\label{AB}
$T(a)$ with $a(z)\in \W$ is invertible on $\ell^{\infty}$ if and only if   $a(z)\ne 0$ for all $z\in \mathbb T$ and  ${\rm wind}\ a=0$, where  ${\rm wind}\ a$ means the winding number of the function $a(z)$. 
\end{lemma}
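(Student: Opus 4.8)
This is the classical Gohberg--Coburn invertibility criterion for Toeplitz operators with continuous symbol, specialized to Wiener-class symbols; I would prove the two implications separately, with the Wiener--Hopf factorization as the engine. Throughout, let $\mathcal W_+:=\{c\in\mathcal W:c_i=0\ \text{for}\ i<0\}$ and $\mathcal W_-:=\{c\in\mathcal W:c_i=0\ \text{for}\ i>0\}$ denote the analytic and co-analytic Wiener subalgebras, and recall the algebraic identities $T(cd)=T(c)T(d)$, valid whenever $d\in\mathcal W_+$ or $c\in\mathcal W_-$; by Lemma~\ref{lem:norm} these hold as identities of bounded operators on $\ell^\infty$.

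\emph{Sufficiency.} Assume $a\ne 0$ on $\mathbb T$ and $\mathrm{wind}\,a=0$. By Wiener's theorem $1/a\in\mathcal W$, so $a$ is invertible in the Banach algebra $\mathcal W$, and $\mathrm{wind}\,a=0$ places $a$ in the connected component of the constant $1$ in the group of invertibles of $\mathcal W$; the holomorphic functional calculus then yields $h\in\mathcal W$ with $a=e^{h}$. Splitting $h=h_-+h_0+h_+$ into its strictly co-analytic, constant, and strictly analytic Fourier parts, and setting $a_\pm:=e^{h_\pm}\in\mathcal W_\pm$ (with inverses $e^{-h_\pm}\in\mathcal W_\pm$) and $\alpha:=e^{h_0}\in\mathbb C$, we obtain the Wiener--Hopf factorization $a=a_-\,\alpha\,a_+$. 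The identities above give $T(a)=T(a_-)\,T(\alpha a_+)=\alpha\,T(a_-)\,T(a_+)$, and $T(a_+)$, $T(a_-)$ are invertible on $\ell^\infty$ with inverses $T(1/a_+)$, $T(1/a_-)$; hence $T(a)$ is invertible.

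\emph{Necessity.} Suppose $T(a)$ is invertible. If $a(z_0)=0$ for some $z_0\in\mathbb T$, then $T(a)$ is not bounded below: with the normalized truncated exponentials $x_N=N^{-1/2}(z_0^{\,0},\dots,z_0^{\,N-1},0,\dots)$ one gets $\|T(a)x_N\|\to 0$ in $\ell^2$, since $(T(a)x_N)_i\approx z_0^{\,i}a(z_0)=0$ away from the two ends of the block and the end contributions vanish in norm because $a\in\ell^1$; as the invertibility of a Wiener-class Toeplitz operator does not depend on the underlying $\ell^p$ (equivalently, $T(a)$ is Fredholm on $\ell^\infty$ iff $a$ is non-vanishing on $\mathbb T$), this contradicts the hypothesis, so $a\ne 0$ on $\mathbb T$. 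Now assume, for contradiction, $\mathrm{wind}\,a=k\ne 0$, and write $a(z)=z^k b(z)$ with $b\in\mathcal W$ non-vanishing and $\mathrm{wind}\,b=0$, so that $T(b)$ is invertible by the sufficiency part. The factorization identities give $T(a)=T(b)\,T(z^k)$ when $k>0$ (as $z^k\in\mathcal W_+$) and $T(a)=T(z^k)\,T(b)$ when $k<0$ (as $z^k\in\mathcal W_-$). Since $T(z^k)$ is a nonzero power of a one-sided shift on $\ell^\infty$ --- with a $|k|$-dimensional kernel if $k>0$ and a $|k|$-dimensional cokernel if $k<0$ --- and $T(b)$ is invertible, $T(a)$ fails to be injective or to be surjective, contradicting invertibility. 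Hence $\mathrm{wind}\,a=0$.

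The delicate step is the Wiener--Hopf factorization: proving that a non-vanishing, winding-zero symbol $a\in\mathcal W$ has a logarithm in $\mathcal W$, equivalently that the winding-zero symbols are exactly the identity component of the invertible group of $\mathcal W$ --- this rests on Wiener's $1/f$ theorem together with the holomorphic functional calculus in the Banach algebra $\mathcal W$. A secondary point needing care is transferring the necessity arguments, most naturally run in $\ell^2$, to $\ell^\infty$: I would handle this by invoking the independence of Fredholmness, index, and invertibility of Wiener-class Toeplitz operators on the choice of $\ell^p$ (all part of the theory in \cite{AB}) rather than redoing approximate-kernel estimates directly in $\ell^\infty$, where the sup-norm is sensitive to the block endpoints.
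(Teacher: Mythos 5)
This lemma is not proved in the paper; it is quoted directly from \cite[Corollary 1.11]{AB} (B\"ottcher and Grudsky), so there is no in-paper argument to compare against. Your Wiener--Hopf proof is the classical Gohberg--Coburn argument and is, in substance, the proof developed in that reference: the sufficiency half via the factorization $a=a_-\alpha a_+$ with $a_\pm^{\pm1}\in\mathcal W_\pm$ and $T(a)^{-1}=\alpha^{-1}T(1/a_+)T(1/a_-)$ is sound as written, and the necessity half correctly reduces to the nontrivial kernel or cokernel of the shift factor $T(z^k)$ once the winding is factored out. Two caveats on the steps you yourself flag. First, the logarithm step is not a direct application of holomorphic functional calculus to the single element $a$: the range $a(\mathbb T)$ is a closed curve avoiding $0$ but need not lie in a simply connected domain avoiding $0$, so one cannot simply apply a branch of $\log$; what is actually needed is the Banach-algebra fact that $\exp(\mathcal W)$ is exactly the identity component $G_0(\mathcal W)$ of the invertible group (an open-subgroup argument), combined with the topological fact that $\mathrm{wind}\,a=0$ characterizes membership in $G_0(\mathcal W)$. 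You correctly identify this as the crux, but be careful not to attribute it to functional calculus alone. Second, in the necessity direction your appeal to $\ell^p$-independence is a legitimate citation, but within \cite{AB} that independence is itself derived from the Wiener--Hopf machinery, so a self-contained account would instead argue on $\ell^\infty$ (or on its predual $\ell^1$, using the transpose $T(a)^{T}=T(\tilde a)$ and the norm equality of Lemma~\ref{lem:norm}) that a zero of $a$ destroys Fredholmness and that $T(z^k)$ carries index $-k$. Neither point is a gap in a citation-level sketch; both are genuine technical loads that the reference carries for you.
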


\begin{lemma}\cite[Lemma 2.18]{BMML2020}\label{lem:eqt}
	Suppose $A=T(a)+E_A+\one v_A^T\in \EQT_{\infty}$ and  $B=T(b)+E_B+\one v_B^T\in \EQT_{\infty}$, then $C=AB\in \EQT_{\infty}$ and $C=T(c)+E_C+\one v_C^T$, where $c=ab$ and $v_C=(\sum_{j\in \mathbb Z}a_j)v_B+B^Tv_A$. 
\end{lemma}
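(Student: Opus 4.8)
The plan is to expand $C=AB=(T(a)+E_A+\one v_A^T)(T(b)+E_B+\one v_B^T)$ into its nine summands and to sort these into three groups: a genuine semi-infinite Toeplitz block, which will turn out to be $T(c)$ with $c=ab$; a correction block obeying the row-sum decay condition that defines $E_C$; and a rank-one block $\one v_C^T$. Since $\W$ is a Banach algebra we may set $c:=ab\in\W$, so $T(c)$ is a legitimate Toeplitz piece, and the real content of the lemma is that all the leftover error material has decaying row sums and that the rank-one part carries exactly the coefficient $(\sum_j a_j)v_B+B^Tv_A$.

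First I would treat the Toeplitz--Toeplitz product. Comparing entries, $(T(a)T(b))_{i,j}=\sum_{k\ge 1}a_{k-i}b_{j-k}$ whereas $(T(c))_{i,j}=\sum_{k\in\Z}a_{k-i}b_{j-k}$, so $R:=T(a)T(b)-T(c)$ has $R_{i,j}=-\sum_{k\le 0}a_{k-i}b_{j-k}$, and the $\ell^1$-norm of its $i$-th row is at most $\|b\|_{\w}\sum_{l\le -i}|a_l|\to 0$ because $a\in\ell^1$; hence $R$ lies in the correction class. An analogous elementary computation gives $T(a)\one=s_a\one+w$ with $s_a:=\sum_{k\in\Z}a_k$ and $w_i=-\sum_{k\le -i}a_k\to 0$, so that $T(a)\one v_B^T=s_a\one v_B^T+wv_B^T$, where $wv_B^T$ has $i$-th row $\ell^1$-norm $|w_i|\,\|v_B\|_1\to 0$.

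Next I would dispose of the remaining cross-terms $E_AT(b)$, $E_AE_B$, $E_A\one v_B^T$ and $T(a)E_B$. For the first three the bounds are immediate from the boundedness and decay of the row sums of $E_A$ (using also that the row sums of $E_B$ are bounded and $\|v_B\|_1<\infty$). The only term needing a little care is $T(a)E_B$: with $\rho_k:=\sum_j|(E_B)_{k,j}|$, the $\ell^1$-norm of its $i$-th row is at most $\sum_{k\ge 1}|a_{k-i}|\rho_k=\sum_{m\ge 1-i}|a_m|\rho_{m+i}$, which tends to $0$ by dominated convergence since $|a_m|\rho_{m+i}\to 0$ for each $m$ and is dominated by the summable sequence $|a_m|\sup_k\rho_k$. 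Collecting $R$, $wv_B^T$ and these four terms into $E_C$ then gives $\lim_{i\to\infty}\sum_j|(E_C)_{i,j}|=0$.

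It remains to assemble the rank-one block from $s_a\one v_B^T+\one v_A^TT(b)+\one v_A^TE_B+\one v_A^T\one v_B^T$. Using $v_A^TT(b)=(T(b)^Tv_A)^T$, $v_A^TE_B=(E_B^Tv_A)^T$ and $v_A^T\one v_B^T=(\one^Tv_A)v_B^T$, this equals $\one v_C^T$ with
\[
v_C=s_av_B+T(b)^Tv_A+E_B^Tv_A+(\one^Tv_A)v_B=\Big(\sum_{j\in\Z}a_j\Big)v_B+B^Tv_A,
\]
the last equality because $B^Tv_A=T(b)^Tv_A+E_B^Tv_A+v_B(\one^Tv_A)$. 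One finally checks $v_C\in\ell^1$: $s_av_B\in\ell^1$ trivially, while $T(b)^Tv_A$ (of $\ell^1$-norm $\le\|b\|_{\w}\|v_A\|_1$ by Lemma~\ref{lem:norm}), $E_B^Tv_A$ (of $\ell^1$-norm $\le\sup_k\rho_k\cdot\|v_A\|_1$) and $v_B(\one^Tv_A)$ all lie in $\ell^1$. This yields $C=T(c)+E_C+\one v_C^T\in\EQT_\infty$ with the asserted data. I expect the main obstacle to be the bookkeeping in the third step — in particular verifying that the Toeplitz--Toeplitz correction $R$ and the mixed term $T(a)E_B$ have decaying row sums — since neither is a row-local operation and one must carefully couple the $\ell^1$-summability of $a$ with the decay of the row sums of the correction matrices.
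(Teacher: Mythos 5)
The paper does not prove this lemma: it is cited verbatim from \cite[Lemma 2.18]{BMML2020} with no internal argument, so there is no in-text proof to compare against. Your blind reconstruction is nevertheless correct and is the natural proof one would give: the nine-term expansion of $AB$, the identification of $R=T(a)T(b)-T(c)$ as a correction whose $i$-th row sum is bounded by $\|b\|_{\w}\sum_{l\le -i}|a_l|\to 0$, the splitting $T(a)\one=s_a\one+w$ with $w_i=-\sum_{m\le -i}a_m$, the dominated-convergence estimate for $T(a)E_B$, and the regrouping of $s_a\one v_B^T+\one v_A^T T(b)+\one v_A^T E_B+(\one^Tv_A)\one v_B^T$ into $\one v_C^T$ with $v_C=s_a v_B+B^Tv_A$ all check out. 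One small point worth making explicit, though it follows from what you wrote: the correction class also requires $\sup_i\sum_j|(E_C)_{i,j}|<\infty$ so that $C$ is bounded on $\ell^\infty$; since each of your six contributions to $E_C$ has finite row sums that converge to $0$, this uniform bound is automatic.
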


The following result can be deduced from Theorem 3.7, Lemma 2.3 and Lemma 3.1 in \cite{arxiv_jie}.

\begin{lemma}\label{lem:square}
Suppose  ${{A=I-(T(a)+ E_A+\one v_A^T)}}  \in \EQT_{\infty}$ satisfies ${{T(a)+E_A+\one v_A^T}}\geq 0$ and ${{\|T(a)+E_A+\one v_A^T\|_{\infty}}}<1 $, then 
\begin{itemize}
	\item[ {\rm i)}] ${{\|T(a)\|_{\infty}<1, \|T(a)+\one v_A^T\|_{\infty}=\|a\|_{\w}+\|v_A\|_1<1}}$;
	
	\item [{\rm ii)}]there is  a unique $G=T(g)+E_G+\one v_G^T\in \EQT_{\infty}$ such that $G\geq 0$, $\|G\|_{\infty}<1 $ and  $(I-G)^2={{A}}$;
	
	\item [{\rm iii)}] $T(g)\geq 0$, $v_G\geq 0$, and  $\|T(g)+\one v_G^T\|_{\infty}=\|g\|_{\w}+\|v_G\|_1<1 $;
	\item [{\rm iv)}]  The matrices $I-T(g)$ and $ I-T(g)-\one v_G^T$ are   both invertible quasi-Toeplitz $M$-matrices. 
\end{itemize}
\end{lemma}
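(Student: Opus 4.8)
The plan is to dispatch parts i) and iv) by direct positivity and norm estimates, and to obtain parts ii)--iii) by recognizing that the stated hypotheses are precisely the input required by the quasi-Toeplitz square-root results of \cite{arxiv_jie}, which I will then invoke.

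For part i), I would set $A_1:=T(a)+E_A+\one v_A^T\ge 0$ and first push the entrywise nonnegativity down to the symbol and the rank-one vector by a limiting argument. Since $\lim_{i\to\infty}\sum_{j}|(e_A)_{i,j}|=0$ and $v_A\in\ell^1$, fixing $k$ and letting $i\to\infty$ in $(A_1)_{i,\,i+k}=a_{k}+(e_A)_{i,\,i+k}+(v_A)_{i+k}\ge 0$ forces $a_k\ge 0$ for every $k$, while fixing $j$ and letting $i\to\infty$ in $(A_1)_{i,j}=a_{j-i}+(e_A)_{i,j}+(v_A)_{j}\ge 0$ forces $(v_A)_j\ge 0$; hence $T(a)\ge 0$ and $v_A\ge 0$. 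With nonnegativity available, the $\ell^1$-norm of row $i$ of $A_1$ equals $\sum_{k\ge 1-i}a_k+\sum_j(e_A)_{i,j}+\|v_A\|_1$, and since $\sum_{k\ge 1-i}a_k$ increases to $\|a\|_{\w}$ while $|\sum_j(e_A)_{i,j}|\le\sum_j|(e_A)_{i,j}|\to 0$, this row norm tends to $\|a\|_{\w}+\|v_A\|_1$; as every row norm is bounded by $\|A_1\|_\infty<1$, I obtain $\|a\|_{\w}+\|v_A\|_1<1$. The same computation with $E_A$ deleted, using $a,v_A\ge 0$ and that the supremum of the row sums is attained in the limit $i\to\infty$, gives $\|T(a)+\one v_A^T\|_{\infty}=\|a\|_{\w}+\|v_A\|_1$, and in particular $\|T(a)\|_{\infty}=\|a\|_{\w}<1$ (consistent with Lemma~\ref{lem:norm}).

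For parts ii)--iii), I would note that $A=I-A_1$ with $A_1\ge 0$ and $\rho(A_1)\le\|A_1\|_{\infty}<1$, so $A$ is an invertible quasi-Toeplitz $M$-matrix in the sense recalled in the preliminaries, and the square-root theorem \cite[Theorem~3.7]{arxiv_jie} applies, delivering the unique $G=T(g)+E_G+\one v_G^T\in\EQT_{\infty}$ with $G\ge 0$, $\|G\|_{\infty}<1$ and $(I-G)^2=A$ required in ii). The signs $T(g)\ge 0$ and $v_G\ge 0$ in iii) are then read off from \cite[Lemma~2.3, Lemma~3.1]{arxiv_jie}, which describe the Toeplitz and rank-one parts of this solution; alternatively, once $G\ge 0$ is known, the limiting argument of part i) applied to $G$ itself yields $T(g)\ge 0$ and $v_G\ge 0$, after which $\|T(g)+\one v_G^T\|_{\infty}=\|g\|_{\w}+\|v_G\|_1$ equals the limit of the row $\ell^1$-norms of $G$ and is therefore $\le\|G\|_{\infty}<1$.

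Part iv) is then immediate: by iii) both $I-T(g)$ and $I-T(g)-\one v_G^T$ have the form $I-A_2$ with $A_2\in\EQT_{\infty}$, $A_2\ge 0$ and $\rho(A_2)\le\|A_2\|_{\infty}<1$ (using $\|T(g)\|_{\infty}=\|g\|_{\w}<1$ and $\|T(g)+\one v_G^T\|_{\infty}=\|g\|_{\w}+\|v_G\|_1<1$), hence they are invertible quasi-Toeplitz $M$-matrices by the characterization recalled above. The main obstacle is parts ii)--iii): the remaining parts are just positivity and norm bookkeeping, whereas the existence, uniqueness, and $\EQT_{\infty}$-membership of $G$, together with the structure of its parts, genuinely rest on the fixed-point construction and the Banach-algebra properties of $\EQT_{\infty}$ developed in \cite{arxiv_jie}; the real task of the proof is therefore only to verify that the standing hypotheses $T(a)+E_A+\one v_A^T\ge 0$ and $\|T(a)+E_A+\one v_A^T\|_{\infty}<1$ match the input needed by \cite[Theorem~3.7, Lemma~2.3, Lemma~3.1]{arxiv_jie}, which is clear once part i) has been established.
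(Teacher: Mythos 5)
Your proposal is correct and matches the paper's approach: the paper offers no proof, only the one-line remark that the lemma ``can be deduced from Theorem 3.7, Lemma 2.3 and Lemma 3.1 in \cite{arxiv_jie},'' and you invoke exactly those results for the substantive parts ii)--iii). Your limiting arguments for the nonnegativity of $T(a)$, $v_A$, $T(g)$, $v_G$ and the resulting row-norm identities giving i), the strict inequalities in iii), and the $M$-matrix conclusion in iv) are a correct and useful elaboration of what the paper leaves implicit.
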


The above lemma gives a bound to $\|g\|_{\w}$, that is, $\|g\|_{\w}<1-\|v_G\|_1$. We show that $\|g\|_{\w}\leq 1-\max \{\|v_A\|_1, \|v_G\|_1\}$. 

\begin{lemma}\label{lem:pro}
Suppose ${{A=I-(T(a)+E_A+\one v_A^T)}} \in \EQT_{\infty}$ and $G=I-(T(g)+E_G+\one v_G^T)$ is such that $G^2={{A}} $, then under the conditions in Lemma \ref{lem:square}, we have $\|g\|_{\w}+\|v_A\|_1<1$ and $\|v_A\|_1\leq (1-\|g\|_{\w})^2$.
\end{lemma}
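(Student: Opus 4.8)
The plan is to deduce both estimates from one symbol identity, $a=2g-g^{2}$, together with the $M$-matrix bound $\|a\|_{\w}+\|v_A\|_1<1$ already recorded in Lemma~\ref{lem:square}(i). First I would fix notation: write $\widehat{G}:=T(g)+E_G+\one v_G^T$ and $\widehat{A}:=T(a)+E_A+\one v_A^T$, so that $G=I-\widehat{G}$, $A=I-\widehat{A}$, and the conditions of Lemma~\ref{lem:square} become $\widehat{A}\ge 0$ and $\|\widehat{A}\|_{\infty}<1$. Here $G$ is the square root of $A$ provided by Lemma~\ref{lem:square}(ii), so by part (iii) we have $T(g)\ge 0$; hence $g_k\ge 0$ for every $k$, and consequently $\sigma:=g(1)=\sum_k g_k=\|g\|_{\w}$, with $\sigma<1$ by Lemma~\ref{lem:square}(iii).

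The second step is to identify the Toeplitz symbol of $A$. Expanding $G^{2}=A$ gives $\widehat{A}=I-(I-\widehat{G})^{2}=2\widehat{G}-\widehat{G}^{2}$. Since $\EQT_{\infty}$ is a Banach algebra, $\widehat{G}^{2}\in\EQT_{\infty}$, and by Lemma~\ref{lem:eqt} its Toeplitz symbol equals $g^{2}$; hence $2\widehat{G}-\widehat{G}^{2}$ has Toeplitz symbol $2g-g^{2}$. Because the Toeplitz symbol of an element of $\EQT_{\infty}$ is uniquely determined --- it is recovered by $a_k=\lim_{m\to\infty}(\widehat{A})_{m,m+k}$, the entries of $E_A$ and of $\one v_A^T$ tending to $0$ as $m\to\infty$ --- we conclude $a=2g-g^{2}$, so $a(1)=2\sigma-\sigma^{2}$. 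Therefore
\[
\|a\|_{\w}=\sum_k|a_k|\ \ge\ \sum_k a_k\ =\ a(1)\ =\ 2\sigma-\sigma^{2}\ \ge\ \sigma\ =\ \|g\|_{\w},
\]
the last inequality because $2\sigma-\sigma^{2}-\sigma=\sigma(1-\sigma)\ge 0$.

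It remains to combine this with Lemma~\ref{lem:square}(i). From $\|g\|_{\w}\le\|a\|_{\w}$ and $\|a\|_{\w}+\|v_A\|_1<1$ we get immediately $\|g\|_{\w}+\|v_A\|_1\le\|a\|_{\w}+\|v_A\|_1<1$, which is the first assertion. For the second, the same strict bound gives $\|v_A\|_1<1-\|a\|_{\w}\le 1-(2\sigma-\sigma^{2})=(1-\sigma)^{2}=(1-\|g\|_{\w})^{2}$.

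The only step that is not a routine manipulation is the identity $a=2g-g^{2}$, which relies on Lemma~\ref{lem:eqt} to compute the Toeplitz part of $\widehat{G}^{2}$ and on the uniqueness of the Toeplitz symbol in the $\EQT_{\infty}$ representation, so that the symbol of $A$ fixed in the statement is forced to be $2g-g^{2}$. Everything else is elementary; in particular the argument shows that the second inequality is in fact strict.
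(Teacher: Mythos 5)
Your proof is correct and takes essentially the same approach as the paper: both hinge on the symbol identity $a=2g-g^2$ (via Lemma \ref{lem:eqt}) and the resulting bound $\|a\|_{\w}\geq 2\|g\|_{\w}-\|g\|_{\w}^2$, combined with $\|a\|_{\w}+\|v_A\|_1<1$ from Lemma \ref{lem:square}(i). The only cosmetic differences are that you derive the key bound via $\|g\|_{\w}=g(1)$ (exploiting nonnegativity of the coefficients of $g$) rather than via $\|g^2\|_{\w}\leq\|g\|_{\w}^2$, and you obtain the first conclusion directly from $\|g\|_{\w}\leq\|a\|_{\w}$ rather than deducing it from the second conclusion as the paper does.
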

\begin{proof}
In view of Lemma \ref{lem:square} and Lemma \ref{lem:eqt}, we know that  $\|v_A\|_1+\|a\|_{\w}<1$ and  $a(z)=2g(z)-g(z)^2$, from the second equation,  we obtain  $\|a\|_{\w}=\|2g-g^2\|_{\w}\geq 2\|g\|_{\w}-\|g\|_{\w}^2$, it follows that 
\[
\|v_A\|_1<1-\|a\|_{\w}<1-2\|g\|_{\w}+\|g\|_{\w}^2=(1-\|g\|_{\w})^2.
\]

Since $0<1-\|g\|_{\w}<1$, we have $(1-\|g\|_{\w})^2<1-\|g\|_{\w}$, so that $\|v_A\|_1\leq 1-\|g\|_{\w}$, that is, $\|v_A\|_1+\|g\|_{\w}< 1$. 

\end{proof}

\section{Theoretical analysis }\label{theo}
In this section, we  first show in detail where equation \eqref{eq:equation1} is derived from, then we  investigate its solution of interest. In what follows, the dimension of the identity matrix 
$I$ is determined by context.
\subsection{Construction of the problem}

Under the conditions in Lemma \ref{lem:square}, the matrix $A=I-T(a)-E_A-\one v_A^T\in \EQT_{\infty}$ admits a unique square root of the form $G=I-T(g)-E_G-\one v_G^T$, that is,  $G^2=A$. For the case where $E_G$ is trivial,  the computation of $G$ can be divided into two parts, one of which is to compute the Toeplitz matrix $T(g)$ based on the evaluation/interpolation technique \cite{ckm}. This way, for a given $\epsilon>0$,  $T(g)$ can be approximated by $T(\tilde{g})$ with $\tilde{g}(z)=\sum_{i=-N}^N \tilde{g}_iz^i$  and it holds $\|g-\tilde{g}\|_{\w}<\epsilon$. 

Then it  remains  to compute the nonnegative vector $v_G$, which was shown  in  \cite[page 14]{arxiv_jie}   to satisfy
\begin{equation}\label{eq:sys1}
(\alpha I-T(g))^Tv_G-\mu_G v_G=v_A,
\end{equation}
where  $\alpha=2-\|g\|_{\w}$ and $\mu_G=\|v_G\|_1$.  Observe that  in the numerical treatment,   $T(g)$ is approximated by $T(\tilde{g})$, where $\tilde{g}(z)$  has only finitely many nonzero coefficients.  Then in the numerical treatment of equation \eqref{eq:sys1}, $T(g)$ is actually replaced by $T(\tilde{g})$.   Without loss of generality, we may assume $g(z)=\sum_{i=-(N-1)}^{N-1}g_iz^i$ for some constant $N$.

In real applications,  the vector $v_A$ always has a finite number of nonzero elements, see \cite{BMML2020} for instance. In the following, we assume that the vector $v_A$ contains only the first $s$ nonzero entries. On the other hand, since $v_G\in \ell^1$, that is, $\|v_G\|_1<\infty$, then for arbitrary small $\epsilon$, there is sufficiently large $t$, such that for $\widehat{v}_G=(\widehat{v}_1^T, 0)^T$   it holds 
\begin{equation}\label{eq:vg}  
\|\widehat{v}_G-v_G\|_1< \epsilon,
\end{equation}
where the vector $\widehat{v}_1$ is composed of  the first $t$ elements of vector $v_G$.  

Set $p=\max\{s, t, N\}$,  where $N$ is the bandwidth of Toeplitz matrix $T(g)$, and $s$ and $t$ are the number of nonzero elements of the vector ${v}_A$ and $\widehat{v}_G$, respectively.   We partition the matrix $(\alpha I-T(g))^T$ into $\left(\begin{array}{cc} {T}_{11}&{T}_{12} \\ {T}_{21}& {T}_{22}\end{array}\right)$, where ${T}_{11}=\alpha I_p-T_p(\bar{g})\in \mathbb R^{p\times p}$, ${T}_{12}\in \mathbb R^{p\times \infty}$, ${T}_{21}\in \mathbb R^{\infty\times p}$ and ${T}_{22}\in \mathbb R^{\infty\times \infty}$. 
Now  write $v_G=(v_1^T, v_2^T)^T$, where $v_1\in \mathbb R^p$ and $v_2\in \mathbb R^{\infty}$. Then it is clear that 
\begin{equation}\label{eq:normv2}
\|v_2\|_1\leq \|\widehat{v}_G-v_G\|_1<\epsilon,\ 
\|v_1\|_1\leq \|v_G\|_1<1,\ {\rm and}\ \|v_1\|+\|g\|_{\w}<1.  
\end{equation}
Moreover, 
we obtain from equation \eqref{eq:sys1} that 
\begin{equation}\label{eq:v1}
T_{11} v_1-\mu_G v_1=\widehat{v}_A-T_{12}v_2, 
\end{equation}
where $\widehat{v}_A \in \mathbb R^p$ is composed of the first $p$ elements of $v_A$, and it is clear that $\|\widehat{v}_A\|_1=\|v_A\|_1$.   
It can be seen that equation \eqref{eq:v1} is equivalent to 
\begin{equation}\label{eq:v11}
T_{11}v_1-\mu_1v_1=\widehat{v}_A-T_{12}v_2+(\mu_G-\mu_1)v_1,
\end{equation}
where $\mu_1=\|v_1\|_1$. 

Consider the equation 
\begin{equation}\label{eq:v2}
T_{11}y-\mu_y y=\widehat{v}_A,
\end{equation}
where $\mu_y=\|y\|_1$. If equation \eqref{eq:v2} has a solution $y$,  we show that the vector $v_1$ can be approximated by $y$. 

\begin{lemma}\label{lem:y}
Let $v_G=(v_1^T, v_2^T)^T$ with $v_1\in \mathbb R^p$ and $v_2\in \mathbb R^{\infty}$ be a solution of equation $\eqref{eq:sys1}$. For given $\epsilon>0$, if $\|v_2\|_1<\epsilon$ and  equation \eqref{eq:v2} has a solution $y$ such that $\|g\|_{\w }+\|y\|_1<1$, then  it satisfies $\|v_1-y\|_1<c\epsilon$ for some constant $c$. 
\end{lemma}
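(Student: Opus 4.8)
The plan is to rewrite the linear systems \eqref{eq:v11} and \eqref{eq:v2} in resolvent form, compare the two resolvents by means of the resolvent identity, and close the resulting inequality by a self-absorption argument. The substantive point is that the self-referential coefficient that appears stays below $1$, and this is exactly the strict bound $\|v_A\|_1<(1-\|g\|_{\w})^2$ furnished by Lemma \ref{lem:pro}; everything else is bookkeeping with the norm estimate \eqref{norm1}.

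First I would set up uniform resolvent bounds. Writing $T_{11}-\mu I=(2-\|g\|_{\w}-\mu)I_p-T_p(\bar g)$ and using $\|T_p(\bar g)\|_1\le\|g\|_{\w}$ (Lemma \ref{lem:norm} and the remark following it), one gets, exactly as in \eqref{norm1}, that whenever $\mu<2-2\|g\|_{\w}$ the matrix $T_{11}-\mu I$ is invertible with $\|(T_{11}-\mu I)^{-1}\|_1\le 1/(2-2\|g\|_{\w}-\mu)$. By \eqref{eq:normv2} one has $\mu_1=\|v_1\|_1<1-\|g\|_{\w}$, and by hypothesis $\mu_y=\|y\|_1<1-\|g\|_{\w}$; for both of these values $2-2\|g\|_{\w}-\mu>1-\|g\|_{\w}$, so $\|(T_{11}-\mu_1 I)^{-1}\|_1$ and $\|(T_{11}-\mu_y I)^{-1}\|_1$ are both bounded by $1/(1-\|g\|_{\w})$. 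Moreover, since the $1$-norm is additive over the splitting $v_G=(v_1^T,v_2^T)^T$, one has $\mu_G=\mu_1+\|v_2\|_1$, whence $0\le\mu_G-\mu_1=\|v_2\|_1<\epsilon$.

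Next, from \eqref{eq:v11} I would write $v_1=(T_{11}-\mu_1 I)^{-1}\widehat v_A+(T_{11}-\mu_1 I)^{-1}r$ with $r:=-T_{12}v_2+(\mu_G-\mu_1)v_1$, and from \eqref{eq:v2}, $y=(T_{11}-\mu_y I)^{-1}\widehat v_A$. Since $T_{12}$ is a submatrix of $-T(\bar g)$ one has $\|T_{12}v_2\|_1\le\|g\|_{\w}\|v_2\|_1$, and by \eqref{eq:normv2} $(\mu_G-\mu_1)\|v_1\|_1<\epsilon$, so $\|r\|_1<(1+\|g\|_{\w})\epsilon$. Subtracting the two expressions, using the resolvent identity
\[
(T_{11}-\mu_1 I)^{-1}-(T_{11}-\mu_y I)^{-1}=(\mu_1-\mu_y)(T_{11}-\mu_1 I)^{-1}(T_{11}-\mu_y I)^{-1},
\]
together with $|\mu_1-\mu_y|=\bigl|\,\|v_1\|_1-\|y\|_1\,\bigr|\le\|v_1-y\|_1$ and $\|\widehat v_A\|_1=\|v_A\|_1$, one arrives at
\[
\|v_1-y\|_1\le\frac{\|v_A\|_1}{(1-\|g\|_{\w})^2}\,\|v_1-y\|_1+\frac{(1+\|g\|_{\w})\epsilon}{1-\|g\|_{\w}}.
\]

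Finally I would invoke Lemma \ref{lem:pro}, which gives $\|v_A\|_1<(1-\|g\|_{\w})^2$, so that $\theta:=\|v_A\|_1/(1-\|g\|_{\w})^2<1$ and the first term on the right can be absorbed into the left-hand side, yielding $\|v_1-y\|_1\le c\epsilon$ with $c=(1-\|g\|_{\w}^2)/\bigl((1-\|g\|_{\w})^2-\|v_A\|_1\bigr)>0$. The main obstacle is precisely this closing step: the estimate for $\|v_1-y\|_1$ feeds back on itself through $|\mu_1-\mu_y|$, and it is the strict inequality in Lemma \ref{lem:pro} that keeps the self-referential coefficient strictly below $1$; the remaining ingredients are the uniform resolvent bound from \eqref{norm1} and the observation that all the shifts $\mu_1,\mu_y,\mu_G$ stay below $1-\|g\|_{\w}$.
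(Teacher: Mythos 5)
Your proof is correct, and it takes a genuinely different route from the paper's. The paper subtracts \eqref{eq:v1} and \eqref{eq:v2} directly and isolates $(T_{11}-\mu_1 I)(v_1-y)=(\mu_1-\mu_y)y-T_{12}v_2+(\mu_G-\mu_1)v_1$; the self-referential coefficient that then appears is $\|y\|_1/(2-2\|g\|_{\w}-\mu_1)$, which the paper shows is $<1$ using only the hypothesis $\|g\|_{\w}+\|y\|_1<1$ and $\|g\|_{\w}+\mu_1<1$, without invoking Lemma \ref{lem:pro} at all. You instead write both $v_1$ and $y$ in resolvent form and apply the resolvent identity, which shifts the self-referential coefficient to $\|v_A\|_1/(1-\|g\|_{\w})^2$, and you then need Lemma \ref{lem:pro} to see that this is $<1$. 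Both arguments close, and they are genuinely two ways of bounding the same self-referential feedback through $|\mu_1-\mu_y|$. Two small advantages of your version: your bound $\|T_{12}\|_1\le\|g\|_{\w}$ (using that $T_{12}$ is purely off-diagonal, hence a submatrix of $-T(\bar g)$) is sharper than the paper's $\|T_{12}\|_1\le 2$; and your constant $c=(1-\|g\|_{\w}^2)/\big((1-\|g\|_{\w})^2-\|v_A\|_1\big)$ depends only on the problem data $\|g\|_{\w}$ and $\|v_A\|_1$, whereas the paper's $c=(3-\|g\|_{\w})/(2-2\|g\|_{\w}-\mu_1-\|y\|_1)$ depends on the unknown solution norms $\mu_1=\|v_1\|_1$ and $\|y\|_1$, which makes your bound more usable a priori. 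The paper's version has the minor merit of being logically independent of Lemma \ref{lem:pro}.
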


\begin{proof}
According to equations \eqref{eq:v1} and \eqref{eq:v2}, we have  
\[
T_{11}(v_1-y)-\mu_1 v_1+\mu_y y=-T_{12}v_2+(\mu_G-\mu_1)v_1,
\]
from which we obtain 
\[
(T_{11}-\mu_1 I)(v_1-y)= (\mu_1 -\mu_y)y-T_{12}v_2+(\mu_G-\mu_1)v_1.
\]

Since $T(\bar{g})=(g_{i-j})_{i,j\in \mathbb Z^+} \geq 0$, $\|T(\bar{g})\|_{\infty}=\|\bar{g}\|_{\w}=\|g\|_{\w}< 1$, and according to Lemma \ref{lem:square} we have $\|g\|_{\w}+\mu_G=\|g\|_{\w }+\|v_G\|_1 <1$, then it follows from \eqref{eq:normv2} that $\alpha -\mu_1\geq \alpha-\mu_G =2-(\|g\|_{\w }+\mu_G)>1$, so that  $(\alpha-\mu_1 ) I-T(\bar{g})$ is an invertible quasi-Toeplitz $M$-matrix. In view of \cite[Theorem 2.13]{laa_jie}, we know that $T_{11}-\mu_1 I$, which is the principal $p\times p$ section of $(\alpha-\mu_1 ) I-T(\bar{g})$,   is  an invertible $M$-matrix. Then, we obtain 
\begin{equation}\label{eq:cy}
v_1-y=(T_{11}-\mu_1I)^{-1}\big((\mu_1 -\mu_y)y-T_{12}v_2+(\mu_G-\mu_1)v_1\big).
\end{equation}
 Together with the fact that $|\mu_G-\mu_1|\leq \|v_G-v_1\|_1=\|v_2\|_1$,  it follows that  
\[
\|v_1-y\|_1\leq\|(T_{11}-\mu_1 I)^{-1}\|_{1}\big (  |\mu_1-\mu_y|\|y\|_1+(\|T_{12}\|_{1}+\|v_1\|_1)\|v_2\|_1\big ).
\]
According to \eqref{norm1}, we have $\|(T_{11}-\mu I)^{-1}\|_1\le \frac{1}{2-2\|g\|_{\w}-\mu_1}$, and it follows that 
\[
\|v_1-y\|_1\leq\frac{1}{2-2\|g\|_{\w}-\mu_1}\big (  |\mu_1-\mu_y|\|y\|_1+(\|T_{12}\|_{1}+\|v_1\|_1)\|v_2\|_1\big ).
\]
Observe that $\|T_{12}\|_{1} \leq \|\alpha I-T(\bar{g})\|_{1}\leq \alpha +\|g\|_{\w}{\le} 2$,  $|\mu_1-\mu_y|\le \|v_1-y\|_1$, and it follows from \eqref{eq:normv2} that $\|v_1\|_1<1-\|g\|_{\w}$.  We thus have 
\[
\|v_1-y\|_1\leq \frac{ \|y\|_1 }{2-2\|g\|_{\w}-\mu_1}\|v_1-y\|_1+\frac{ 3-\|g\|_{\w} }{2-2\|g\|_{\w}-\mu_1}\epsilon. 
\]
Recall that  $\|g\|_{\w}+\mu_1<1$ and we have assumed $\|g\|_{\w}+\|y\|_1<1$, from which we deduce that  
$
\frac{ \|y\|_1 }{2-2\|g\|_{\w}-\mu_1}<1.
$
Set $c=\frac{3-\|g\|_{\w}}{2-2\|g\|_{\w}-\mu_1-\|y\|_1},$ then $c>0$ and we have $\|v_1-y_1\|_1\leq c\epsilon$. The proof is complete.  
\end{proof}

Together with \eqref{eq:vg} and 
Lemma \ref{lem:y}, one can see that for sufficiently large $p$, the vector  $v_y=(y^T,0^T)^T$ with $y\in \mathbb R^p$ being the solution of equation \eqref{eq:v2} satisfies 
$\|v_G-v_y\|_1\leq (1+c)\epsilon$. This provides some insights on the  numerical treatment of the vector $v_G$, that is, if the constant $c$ is small,  we may compute the solution of equation \eqref{eq:v2}, followed by extending the solution to an infinite vector.

Concerning equation \eqref{eq:v2},  observe  that  $T_{11}=\alpha I_p -T_p(\bar{g})$ is an invertible $M$-matrix.  
On the other hand, $\hat{v}_A\geq 0$ and $\|\hat{v}_A\|_1=\|v_A\|_1<(1-\|g\|_{\w})^2$.  Replace $T_{11}$ and $\hat{v}_A$  in \eqref{eq:v2} by $A=\alpha I-T_n(a)$ and a nontrivial nonnegative vector  $b$, respectively, 
 it can be seen that equation \eqref{eq:v2}  is exactly equation \eqref{eq:equation1} if  $A$ and  $b$ satisfy 
\begin{equation}\label{eq:ba}
T_n(a)\geq 0, \quad \|a\|_{\w}+\|b\|_1<1\quad {\rm and}\quad \|b\|_1<(1-\|a\|_{\w})^2.
\end{equation}

 In what follows, we investigate the solvability of equation \eqref{eq:equation1}, and propose numerical algorithms for the numerical treatment of the solution $x$ such that $\|a\|_{\w}+\|x\|_1<1$. Indeed, according to Lemma \ref{lem:y},  a solution satisfying $\|a\|_{\w} + \|x\|_1 < 1$ is the one of interest. We denote this solution by $x_*$ in the remainder of this paper and show that such a solution is unique.

\subsection{Solvability of equation \eqref{eq:equation1}}

  In this subsection, we show that equation \eqref{eq:equation1} admits a unique solution $x_*$. 
 
\begin{lemma}\label{lem:mu0}
Equation \eqref{eq:equation1} has a solution $x$ if and only if   equation \begin{equation}\label{eq:mumu}
    \mu=\|(A-\mu I)^{-1}b\|_1
\end{equation} has a solution $\mu$. If  $\mu$ is a solution of equation \eqref{eq:mumu}, then $x=(A-\mu I)^{-1}b$ is a solution of equation \eqref{eq:equation1}. 
\end{lemma}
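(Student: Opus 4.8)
The plan is to prove the biconditional by exhibiting the tight correspondence between solutions $x$ of \eqref{eq:equation1} and fixed points $\mu$ of \eqref{eq:mumu}. For the forward direction, suppose $x$ solves $Ax - \mu_x x = b$ with $\mu_x = \|x\|_1$. Rewriting this as $(A - \mu_x I)x = b$, I would first argue that $A - \mu_x I$ is invertible: since $A = \alpha I_n - T_n(a)$ is an invertible $M$-matrix and $x$ is presumably nonnegative (or at least we restrict to the relevant range), $\mu_x = \|x\|_1 \ge 0$ is small enough that $A - \mu_x I = (\alpha - \mu_x)I_n - T_n(a)$ is still an invertible $M$-matrix by the bound $\|a\|_{\w} < \alpha - \mu_x$ — this uses \eqref{norm1}-type reasoning together with the constraint $\|a\|_\w + \|x\|_1 < 1 < \alpha$. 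Hence $x = (A - \mu_x I)^{-1}b$, and taking $1$-norms gives $\mu_x = \|x\|_1 = \|(A - \mu_x I)^{-1} b\|_1$, so $\mu_x$ solves \eqref{eq:mumu}.

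For the reverse direction, suppose $\mu$ solves \eqref{eq:mumu}, i.e.\ $\mu = \|(A - \mu I)^{-1} b\|_1$. Implicit in this equation is that $A - \mu I$ is invertible, so set $x := (A - \mu I)^{-1} b$. Then by construction $\|x\|_1 = \|(A - \mu I)^{-1}b\|_1 = \mu$, and therefore $(A - \mu I) x = b$ rearranges to $Ax - \mu x = Ax - \|x\|_1 x = b$, which is exactly \eqref{eq:equation1}. This simultaneously verifies the last sentence of the statement, that $(A - \mu I)^{-1}b$ is a solution.

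The main subtlety — and really the only place any care is needed — is the well-definedness of $(A - \mu I)^{-1}$ in both directions: one must be sure we are never inverting a singular matrix. In the reverse direction this is essentially free, since the right-hand side of \eqref{eq:mumu} only makes sense when $A - \mu I$ is invertible, so I would simply note that \eqref{eq:mumu} is implicitly a statement about $\mu$ in the domain where $A - \mu I$ is nonsingular. In the forward direction it requires the observation that a solution $x$ of \eqref{eq:equation1} yields $\mu_x = \|x\|_1$, and then invertibility of $A - \mu_x I$ follows from the $M$-matrix structure and the norm bound $\|a\|_\w < 2 - \mu_x$; alternatively, if the lemma is meant to cover all solutions and not merely the nonnegative one $x_*$, one can argue that any solution forces $A - \mu_x I$ invertible because otherwise $b$ would have to lie in a proper subspace while still satisfying the norm identity, which the hypotheses \eqref{eq:ba} preclude. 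I expect the cleanest exposition is to state the invertibility of $A - \mu I$ for all $\mu \le \|v_G\|_1 < 1$ (or more precisely all $\mu$ with $\mu + \|a\|_\w < 1$) as a preliminary observation, then the rest of the proof is two lines in each direction.
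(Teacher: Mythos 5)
Your proposal follows the same argument as the paper's: in both directions you pass between $x$ and $\mu$ via $x = (A-\mu I)^{-1}b$ and the identity $\mu = \|x\|_1$, which is exactly what the paper does. Your additional attention to the invertibility of $A-\mu_x I$ (via $\mu_x + \|a\|_{\w} < 1 < 2-\|a\|_{\w}$, so that $A-\mu_x I = (2-\|a\|_{\w}-\mu_x)I - T_n(a)$ remains an invertible $M$-matrix) is a sound and welcome tightening of a step the paper leaves implicit; the only caveat is that the hand-waved alternative at the end, about forcing $b$ into a proper subspace, is not a rigorous argument and should be dropped in favor of the norm-bound reasoning, which already suffices for the solution of interest.
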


\begin{proof}
Suppose $x$ is a solution of equation \eqref{eq:equation1}, then we have $x=(A-\mu_xI)^{-1}b$, and it is clear that  $\mu_x=\|(A-\mu_x I)^{-1}b\|_1$, that is, $\mu_x$ is a solution of equation \eqref{eq:mumu}. On the other hand, suppose $\mu $ solves equation \eqref{eq:mumu}, set $x=(A-\mu I)^{-1}b$, then $\mu_x=\|(A-\mu I)^{-1}b\|_1=\mu$, and  
\[
Ax-\mu_x x=(A-\mu I)(A-\mu I)^{-1}b=b,
\]
that is, $x=(A-\mu I)^{-1}b$ is a solution of equation \eqref{eq:equation1}. 
\end{proof}

In view of Lemma \ref{lem:mu0}, one can see that the solvability of equation $\mu =\|(A-\mu I)^{-1}b\|_1$ implies the solvability of equation \eqref{eq:equation1}. Definite the function $f:[0,1-\|a\|_{\w}]\rightarrow \mathbb R$ such that 
 \begin{equation}\label{eq:f}
 f(\mu)=\|(A-\mu I)^{-1}b\|_1.
 \end{equation}
 For any $0\le \mu \le 1-\|a\|_{\w}$, we  get  $2-\|a\|_{\w} -\mu \geq 1> \|a\|_{\w}$. It follows that  $A-\mu I = (2-\|a\|_{\w}-\mu )I - T_n(a)$ is an invertible $M$-matrix, so that the function $f(\mu)$ is well defined. 

The function $f$ defined in \eqref{eq:f} plays a crucial role in the analysis of the existence and uniqueness of the solution to equation \eqref{eq:equation1}. We first show that it has  a unique fixed point in the interval $[0,1-\|a\|_{\w}]$. 

Since $A-\mu I$ is an invertible $M$-matrix for any $\mu\in [0,1-\|a\|_{\w}]$, so  $(A-\mu I)^{-1}\geq 0$. Together with the fact that $b\ge 0$, we have for any $\mu, \mu+\Delta \mu\in [0,1-\|a\|_{\w}]$ that 
\[
\|(A-(\mu+\Delta \mu)I)^{-1}b\|_1={\one}^T(A-(\mu+\Delta \mu)I)^{-1}b,
\]
and 
\[
\|(A-\mu I)^{-1}b\|_1={\one^T}(A-\mu I)^{-1}b.
\]
Then we have
\begin{align}\label{eq:deri0}
	f(\mu + \Delta \mu) - f(\mu) 
	&= {\one^T} \big((I - \Delta \mu (A-\mu I)^{-1})^{-1} -I\big)(A-\mu I)^{-1}b\notag \\
& =  {\one^T}(A-\mu I)^{-2}b\Delta \mu+o(\Delta \mu ),
\end{align}
where the last equality follows from the fact that
$$\big(I - \Delta \mu (A-\mu I)^{-1})^{-1}=\sum_{i=0}^{\infty}(\Delta \mu (A-\mu I)^{-1})^i, $$  for $|\Delta \mu |$ sufficiently small.  As a consequence of \eqref{eq:deri0}, we obtain 
 \begin{equation}\label{eq:deri}
 f'(\mu) = \|(A-\mu I)^{-2} b\|_1.
 \end{equation}

 With the help of function $f'(\mu)$, we are ready to show that equation \eqref{eq:equation1} has a unique solution $x_*$ such that $\|a\|_{\w}+\|x_*\|_1<1$, and we have 

\begin{theorem}\label{thm:solution}(Existence and uniqueness of the solution)
 {Among the solutions $x_*$ of equation  \eqref{eq:equation1} there is exactly one which satisfies} $\|a\|_{\w}+\|x_*\|_1<1$. 
\end{theorem}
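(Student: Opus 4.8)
The plan is to pass, via Lemma~\ref{lem:mu0}, from equation \eqref{eq:equation1} to the scalar problem of finding a fixed point of the function $f$ in \eqref{eq:f} on the interval $J:=[0,1-\|a\|_{\w}]$, and then to prove that $f$ has exactly one fixed point there. The constraint $\|a\|_{\w}+\|x\|_1<1$ is exactly the requirement $\|x\|_1\in[0,1-\|a\|_{\w})$, so by Lemma~\ref{lem:mu0} the solutions of \eqref{eq:equation1} in the class of interest correspond bijectively to fixed points $\mu$ of $f$ in $J$, the correspondence being $x\mapsto\|x\|_1$ and $\mu\mapsto(A-\mu I)^{-1}b$. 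It therefore suffices to show that $f$ has a unique fixed point in $J$.

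For existence I would apply the intermediate value theorem to $h(\mu):=f(\mu)-\mu$. The map $f$ is continuous on $J$ because $A-\mu I=(2-\|a\|_{\w}-\mu)I-T_n(a)$ is an invertible $M$-matrix for every $\mu\in J$ (indeed $2-\|a\|_{\w}-\mu\ge1>\|a\|_{\w}$), so $(A-\mu I)^{-1}$ depends rationally on $\mu$ with no pole on $J$. At the left endpoint, $h(0)=\|A^{-1}b\|_1\ge0$. At the right endpoint, $A-(1-\|a\|_{\w})I=I-T_n(a)$, so \eqref{norm1} gives $f(1-\|a\|_{\w})\le\|b\|_1/(1-\|a\|_{\w})$, which is strictly less than $1-\|a\|_{\w}$ by the bound $\|b\|_1<(1-\|a\|_{\w})^2$ in \eqref{eq:ba}; hence $h(1-\|a\|_{\w})<0$. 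Thus $h$ has a zero $\mu_*\in J$, and necessarily $\mu_*<1-\|a\|_{\w}$. (If $b=0$ then $f\equiv0$, $\mu_*=0$, $x_*=0$ and the theorem is immediate, so one may assume $b\ne0$, whence $h(0)>0$.)

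For uniqueness the key point is that $f$ is convex on $J$. Writing $(A-\mu I)^{-1}=\sum_{k\ge0}\mu^kA^{-(k+1)}$ --- a series that converges on all of $J$, since \eqref{norm1} gives $\|A^{-1}\|_1\le 1/(2-2\|a\|_{\w})$ and hence $\mu\,\rho(A^{-1})\le(1-\|a\|_{\w})/(2-2\|a\|_{\w})=\tfrac12<1$ --- and using $A^{-1}\ge0$, $b\ge0$, $e\ge0$, one obtains $f(\mu)=\sum_{k\ge0}\bigl(e^TA^{-(k+1)}b\bigr)\mu^k$, a power series with nonnegative coefficients. Consequently every derivative of $f$ is nonnegative on $J$; in particular $f$ is increasing and convex, so $h=f-\mathrm{id}$ is convex. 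A convex function on $J$ that is $\ge0$ at $0$ and $<0$ at $1-\|a\|_{\w}$ has at most one zero: if $\mu_1<\mu_2$ were two zeros of $h$, then $\mu_2<1-\|a\|_{\w}$ (otherwise $h(1-\|a\|_{\w})=0$), and the three-slope inequality for $h$ at $\mu_1<\mu_2<1-\|a\|_{\w}$ would force $0=\tfrac{h(\mu_2)-h(\mu_1)}{\mu_2-\mu_1}\le\tfrac{h(1-\|a\|_{\w})-h(\mu_2)}{1-\|a\|_{\w}-\mu_2}<0$, a contradiction. Hence $\mu_*$ is the unique fixed point, and $x_*=(A-\mu_*I)^{-1}b$ is the unique solution of \eqref{eq:equation1} with $\|a\|_{\w}+\|x_*\|_1=\|a\|_{\w}+\mu_*<1$.

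I expect the convexity step to be the part requiring the most care: one must justify that the Neumann-type expansion of $(A-\mu I)^{-1}$ converges on the \emph{closed} interval $J$, not merely near $0$, which is exactly what the $M$-matrix norm estimate \eqref{norm1} provides (together with the elementary fact that the resulting scalar series has nonnegative coefficients). A route that avoids the global series is to show directly that $f'(\mu)<1$ at every fixed point: using \eqref{eq:deri} and $x_*=(A-\mu_*I)^{-1}b$ one gets $f'(\mu_*)=\|(A-\mu_*I)^{-1}x_*\|_1\le\|(A-\mu_*I)^{-1}\|_1\,\|x_*\|_1\le\mu_*/(2-2\|a\|_{\w}-\mu_*)<1$ because $\mu_*<1-\|a\|_{\w}$; combined with the sign pattern $h(0)>0>h(1-\|a\|_{\w})$, a standard $C^1$ argument then rules out a second fixed point. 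Either route completes the proof.
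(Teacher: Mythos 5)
Your proof is correct, and it follows the same scaffolding as the paper's (reduce via Lemma~\ref{lem:mu0} to a scalar fixed-point problem on $[0,1-\|a\|_{\w}]$, then prove existence by a sign change at the endpoints), but the uniqueness step is handled by a genuinely different argument. The paper shows $f'(\mu)=\|(A-\mu I)^{-2}b\|_1<1$ for \emph{every} $\mu$ in the interval, using the monotonicity $(A-\mu I)^{-1}\le (A-(1-\|a\|_{\w})I)^{-1}$ and the hypothesis $\|b\|_1<(1-\|a\|_{\w})^2$; this makes $g=f-\mathrm{id}$ strictly decreasing, so the zero is unique. You instead expand $(A-\mu I)^{-1}=\sum_{k\ge 0}\mu^k A^{-(k+1)}$, observe the scalar series $f(\mu)=\sum_k (e^T A^{-(k+1)}b)\mu^k$ has nonnegative coefficients, conclude $f$ (hence $h=f-\mathrm{id}$) is convex, and then invoke the three-slope inequality together with the endpoint signs. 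Both arguments are valid and draw on the same ingredients (the $M$-matrix structure, $A^{-1}\ge0$, the bound \eqref{norm1}, the hypothesis \eqref{eq:ba}). The paper's derivative bound is shorter and is reused later (for instance in Lemma~\ref{lem:mu} and the Newton analysis), whereas your convexity route is more structural and self-contained in that it needs neither the derivative formula \eqref{eq:deri} nor \eqref{ine:fu}; it also has the small advantage of treating the $b=0$ case explicitly, which the paper passes over when asserting $g(0)>0$. Your sketched alternative at the end (bound $f'$ only at fixed points and argue via a sign-crossing) is essentially a weakened form of the paper's global bound and would need the slightly fiddly ``first zero after $\mu_1$'' argument to be airtight, so the convexity route is the cleaner of your two.
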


\begin{proof}
According to Lemma \ref{lem:mu0}, it suffices to show there is a unique $\mu_*$ such that $\mu_* <1-\|a\|_{\w }$ and $\mu_*=f(\mu_*)$.   Observe that for any $\mu\in [0,1-\|a\|_{\w}]$, we have 
\begin{align}\label{ine:fu}
\|(A-\mu I)^{-2} b\|_1 
	&\le \|(A-(1-\|a\|_{\w}) I)^{-2} b\|_1\nonumber  \\
	&\le  \frac{\|b\|_1}{(1-\|a\|_{\w})^2}<1,
\end{align}
where the last inequality follows from \eqref{eq:ba}. Hence, $f'(\mu)<1$ for any $\mu \in [0,1-\|a\|_{\w }]$. Set $g(\mu)=f(\mu)-\mu$, we have \begin{equation}\label{gu}
    g'(\mu)=f'(\mu)-1 < 0,
\end{equation} that is, the function $g(\mu)$ is monotonically decreasing in the interval $[0,1-\|a\|_{\w}]$. Since
$g(0)=\|A^{-1}b\|_1>0,$
and 
\[
\begin{aligned}
g(1-\|a\|_{\w}) &= \|(I - T(a))^{-1}b\|_1 - 1+\|a\|_{\w}\\
& \le \frac{\|b\|_1}{1-\|a\|_{\w}}-1+\|a\|_{\w}\\
& =\frac{\|b\|_1-(1-\|a\|_{\w})^2}{1-\|a\|_{\w}}<0,
\end{aligned}
\]
 we know that the function $g(\mu)$ has one and only one root in the interval $[0,1-\|a\|_{\w}]$, that is, there is a unique $\mu_*\in [0,1-\|a\|_{\w}]$ such that $\mu_*=\|(A-\mu_* I)^{-1}b\|_1$. The proof is complete. 
 \end{proof}

\begin{remark}
Set $A=(2\gamma-\|a\|_{\w}) I-T_n(a)$ with $T_n(a)\ge 0$ and $\|a\|_{\w}<\gamma$. Suppose  $b\ge 0$ is such that $\|a\|_{\w }+\|b\|_1\le \gamma $ and $\|b\|_1\le (\gamma-\|a\|_{\w})^2$. If we replace $1-\|a\|_{\w}$ by $\gamma-\|a\|_{\w}$ in the above analysis, it can be proved that $g'(\mu)\le 0$, $g(0)>0$ and $g(\gamma-\|a\|_{\w})\le 0$, it follows that the function $g(\mu)$ has exactly one root in $[0,\gamma-\|a\|_{\w}]$.
 \end{remark}
 
{
We may extend the results in Theorem \ref{thm:solution} to a more general setting and obtain the following.
 \begin{theorem}\label{thm:solution2}
Suppose in equation \eqref{eq:equation1} the matrix $ A= (2\gamma -\|a\|_{\w })I-T_n(a)$ and  the nontrivial nonnegative vector $b$ satisfies that $\|b\|_1\le (\gamma-\|a\|_{\w})^2$, then among the solutions $x_*$ of equation \eqref{eq:equation1} there is exactly one which satisfy $\|a\|_{\w}+\|x_*\|_1\le \gamma $. 
 \end{theorem}
 }
 
 \subsection{Related rational functions}
As we can see, the functions $f(\mu)$ and $g(\mu)$ play important roles in the theoretical analysis of equation \eqref{eq:equation1}. In this subsection, we study some related functions that will be used in the subsequent analysis. For notational convenience, in what follows we set $\beta=1-\|a\|_{\w}$. 

Consider the function $g(\mu)=f(\mu)-\mu$, we have proved in the proof of Theorem  \ref{thm:solution} that $g'(\mu)<0$ for $\mu\in [0,\beta]$. Moreover, it holds that $g(\mu_*)=0$, where  $\mu_*=\|x_*\|_1\in [0,\beta]$ and $x_*$ is the unique solution of equation \eqref{eq:equation1} such that $\|x_*\|_1+\|a\|_{\w}<1$.  We may conclude the following  
\begin{lemma}\label{lem:g}
    The function $g(\mu)$ is monotonically decreasing in $[0,\beta]$, and it holds $g(\mu)>0$ for $\mu\in[0,\mu_*)$ and $g(\mu)<0$ for $\mu\in (\mu_*,\beta]$.
\end{lemma}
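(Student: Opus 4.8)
The plan is to derive Lemma~\ref{lem:g} as an essentially immediate consequence of facts already established in the proof of Theorem~\ref{thm:solution}. The function $g(\mu)=f(\mu)-\mu$ is defined on $[0,\beta]$ with $\beta=1-\|a\|_{\w}$, and there we have shown (see \eqref{gu}) that $g'(\mu)=f'(\mu)-1<0$ for every $\mu\in[0,\beta]$, which is precisely the claimed monotone decrease. We have also shown $g(0)=\|A^{-1}b\|_1>0$ and $g(\beta)<0$, and that $\mu_*$ is the unique root of $g$ in $[0,\beta]$.

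First I would record the monotonicity statement directly from \eqref{gu}: since $g'<0$ on the whole interval, $g$ is strictly decreasing there. Next, since $g$ is continuous and strictly decreasing with a (unique) zero at $\mu_*\in[0,\beta]$, the sign behaviour follows: for $\mu\in[0,\mu_*)$ we have $g(\mu)>g(\mu_*)=0$, and for $\mu\in(\mu_*,\beta]$ we have $g(\mu)<g(\mu_*)=0$. One small point to be careful about is the endpoint $\mu=0$: Theorem~\ref{thm:solution} gives $\mu_*\in[0,\beta]$, but the computation $g(0)=\|A^{-1}b\|_1>0$ shows $g(0)\neq 0$, so in fact $\mu_*>0$ and the interval $[0,\mu_*)$ is nonempty; this is worth a one-line remark so that the claim ``$g(\mu)>0$ for $\mu\in[0,\mu_*)$'' is not vacuous. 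Similarly $g(\beta)<0$ confirms $\mu_*<\beta$, so $(\mu_*,\beta]$ is nonempty as well.

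There is really no main obstacle here: every ingredient — strict monotonicity via $g'<0$, continuity, and the sign values at the endpoints — has already been proved en route to Theorem~\ref{thm:solution}, so the lemma is a packaging step that isolates these observations for later use (in particular for the convergence analysis of the fixed-point and Newton iterations and the doubling algorithm). The only thing that requires any thought is making sure the strict inequalities are stated with the correct open/closed endpoints, which the sign computations at $0$ and $\beta$ handle. I would therefore write a short proof: invoke \eqref{gu} for monotonicity, then note that a strictly decreasing continuous function with root $\mu_*$ is positive to the left and negative to the right of $\mu_*$, and finally cite $g(0)>0$ and $g(\beta)<0$ to guarantee $0<\mu_*<\beta$ so that both sub-intervals are genuinely nonempty.
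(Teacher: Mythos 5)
Your proposal is correct and matches the paper's approach: the paper presents Lemma~\ref{lem:g} as an immediate packaging of the facts $g'(\mu)<0$ on $[0,\beta]$ (from \eqref{gu}) and $g(\mu_*)=0$, both established en route to Theorem~\ref{thm:solution}, and gives no separate proof. Your extra remark that $g(0)>0$ and $g(\beta)<0$ force $0<\mu_*<\beta$ (so both sub-intervals are nonempty) is a sensible, if minor, tightening; note also that the paper misattributes the derivation of $g'<0$ to Lemma~\ref{lem:mu0} when it actually appears in the proof of Theorem~\ref{thm:solution}, which you have implicitly corrected.
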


{Defining} $g_1(\mu)=\frac{\beta-\mu}{g(\mu)}$, $\mu\in [0,\mu_*)$, we have the following 
\begin{lemma}\label{lem:g1}
The function $g_1(\mu)$ is monotonically increasing on $[0,\mu_*)$.
\end{lemma}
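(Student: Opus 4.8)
The plan is to show $g_1'(\mu) \ge 0$ on $[0,\mu_*)$ by direct differentiation and then controlling the sign of the numerator. Writing $g_1(\mu) = \frac{\beta - \mu}{g(\mu)}$, the quotient rule gives
\[
g_1'(\mu) = \frac{-g(\mu) - (\beta - \mu) g'(\mu)}{g(\mu)^2},
\]
so since $g(\mu)^2 > 0$ on $[0,\mu_*)$, it suffices to prove that the numerator $h(\mu) := -g(\mu) - (\beta - \mu) g'(\mu)$ is nonnegative there. Recalling $g(\mu) = f(\mu) - \mu$ with $f(\mu) = \|(A-\mu I)^{-1}b\|_1$, and using $f'(\mu) = \|(A - \mu I)^{-2} b\|_1$ from \eqref{eq:deri}, I would rewrite $h(\mu)$ purely in terms of $f$: $h(\mu) = -(f(\mu) - \mu) - (\beta - \mu)(f'(\mu) - 1) = \beta - f(\mu) - (\beta - \mu) f'(\mu)$.

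Next I would exploit the resolvent identity to express everything through $e^T(A - \mu I)^{-1} b$ and $e^T (A - \mu I)^{-2} b$ (all quantities are nonnegative since $A - \mu I$ is an invertible $M$-matrix and $b \ge 0$, so the $1$-norms drop the absolute values, exactly as in the derivation of \eqref{eq:deri0}). The key structural fact I expect to need is a bound relating $f'(\mu)$ to $f(\mu)$ via the spectral data of $A - \mu I$: concretely, if $w = (A-\mu I)^{-1} b \ge 0$, then $(A - \mu I)^{-2} b = (A-\mu I)^{-1} w$, and since $A - \mu I = (2 - \|a\|_{\w} - \mu) I - T_n(a) = (\beta + 1 - \mu) I - T_n(a)$ with $T_n(a) \ge 0$, one has the operator bound $(A - \mu I)^{-1} \ge \frac{1}{\beta + 1 - \mu} I$ entrywise applied to nonnegative vectors, hence $f'(\mu) = e^T (A-\mu I)^{-1} w \ge \frac{1}{\beta + 1 - \mu} e^T w = \frac{f(\mu)}{\beta + 1 - \mu}$. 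Plugging this into $h$: since $\beta - \mu \ge 0$, we get $h(\mu) \le \beta - f(\mu) - \frac{(\beta-\mu) f(\mu)}{\beta + 1 - \mu}$ — but this is an upper bound, the wrong direction, so I would instead look for a \emph{lower} bound on $h$, which requires an \emph{upper} bound on $f'(\mu)$ in terms of $f(\mu)$ together with a lower bound on $f(\mu)$; alternatively, and more robustly, I would use the integral/mean-value representation: on $[0,\mu_*)$ we have $g(\mu) > 0 = g(\mu_*)$, so $g(\mu) = -\int_\mu^{\mu_*} g'(t)\, dt = \int_\mu^{\mu_*} (1 - f'(t))\, dt$, and since $f'$ is itself monotonically increasing (because $f'(\mu) = \|(A-\mu I)^{-2}b\|_1$ and $(A - \mu I)^{-1}$ is entrywise increasing in $\mu$), one can compare $g(\mu)$ with $(\beta - \mu)(1 - f'(\mu))$ — namely $g(\mu) \le (\mu_* - \mu)(1 - f'(\mu)) \le (\beta - \mu)(1 - f'(\mu)) = -(\beta-\mu)g'(\mu)$, which gives exactly $h(\mu) = -g(\mu) - (\beta-\mu)g'(\mu) \ge 0$.

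So the cleanest route, which I would adopt, is: (i) establish that $f'$ is nondecreasing on $[0,\beta]$ — this follows since $\mu \mapsto (A - \mu I)^{-1}$ is entrywise nondecreasing (expand $(A - \mu I)^{-1} = \sum_{i \ge 0} \mu^i (\beta+1)^{-i-1}(\,\cdot\,)$-type Neumann series, or note $\frac{d}{d\mu}(A-\mu I)^{-1} = (A-\mu I)^{-2} \ge 0$ and iterate), hence $f''(\mu) = 2\|(A-\mu I)^{-3}b\|_1 \ge 0$, i.e. $f$ is convex; (ii) convexity of $f$ gives, for $0 \le \mu < \mu_* \le \beta$, the tangent-line inequality $f(\mu_*) \ge f(\mu) + f'(\mu)(\mu_* - \mu)$, and using $f(\mu_*) = \mu_*$, rearrange to $\mu_* - f(\mu) \ge f'(\mu)(\mu_* - \mu)$, i.e. $g(\mu) = f(\mu) - \mu \le -f'(\mu)(\mu_* - \mu) + (\mu_* - \mu) = (1 - f'(\mu))(\mu_* - \mu)$; (iii) since $1 - f'(\mu) = -g'(\mu) > 0$ on $[0,\beta]$ and $0 \le \mu_* - \mu \le \beta - \mu$, conclude $g(\mu) \le -g'(\mu)(\beta - \mu)$, which is precisely $h(\mu) \ge 0$, hence $g_1'(\mu) \ge 0$. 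The main obstacle is getting the direction of the inequalities right: naively one compares $g_1$ at two points, but the convexity-plus-fixed-point argument is what makes the numerator's sign transparent, and one must be careful that it is $\mu_* - \mu$ (not $\beta - \mu$) that appears from convexity, with the final slack coming from $\mu_* \le \beta$ and $g' < 0$.
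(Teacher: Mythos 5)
Your argument is correct, but it proceeds by a genuinely different route than the paper's. Both proofs reduce to showing the numerator $\widehat{g}_1(\mu) = \beta - f(\mu) - (\beta-\mu)f'(\mu)$ is positive, but they bound it in opposite ways. The paper uses the hypotheses explicitly: from $\|b\|_1 < \beta^2$ and the $M$-matrix resolvent bound $\|(A-\mu I)^{-1}\|_1 \le \frac{1}{2\beta-\mu}$ it obtains $f(\mu) \le \frac{\beta^2}{2\beta-\mu}$, $f'(\mu) \le \frac{\beta^2}{(2\beta-\mu)^2}$, and an algebraic simplification then yields $\widehat{g}_1(\mu) \ge \frac{\beta(\beta-\mu)^2}{(2\beta-\mu)^2} > 0$, an explicit positive lower bound. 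You instead observe that $f''(\mu) = 2\|(A-\mu I)^{-3}b\|_1 \ge 0$, so $f$ is convex, and apply the tangent-line inequality at the fixed point $\mu_*$ together with $f'<1$ and $\mu_*\le\beta$; this is more conceptual, uses the hypothesis $\|b\|_1<\beta^2$ only indirectly (through $f'<1$), and would transfer to any convex $f$ with $f'<1$ and a fixed point in $[0,\beta]$. One small point: your chain gives $\widehat{g}_1(\mu) \ge 0$, i.e.\ $g_1' \ge 0$; to match the paper's strict monotonicity you should note that $\mu_* = \|x_*\|_1 < \beta$ strictly, which tightens $(1-f'(\mu))(\mu_*-\mu) < (1-f'(\mu))(\beta-\mu)$ and hence $\widehat{g}_1(\mu) > 0$. (The detour with $f'(\mu) \ge f(\mu)/(\beta+1-\mu)$ that you flag as going the wrong direction is indeed a dead end and can be deleted.)
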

\begin{proof}
    It suffices to prove $g_1'(\mu)>0$ for $\mu\in [0,\mu_*)$. A direct computation yields $g_1'(\mu)=\frac{\widehat{g}_1(\mu)}{g^2(\mu)}$, where 
 $\widehat{g}_1(\mu)= \beta - \|(A-\mu I)^{-1}b\|_1 - (\beta -\mu)\|(A-\mu I)^{-2}b\|_1.$ Recall that $\|b\|_1\leq (1-\|a\|_{\w})^2=\beta^2$, it follows that  $\|(A-\mu I)^{-1}b\|_1\leq \|(A-\mu I)^{-1}\|_1 \|b\|_1\leq \frac{\beta^2}{2\beta-\mu}$, we thus have
\[
\widehat{g}_1(\mu)\geq \beta-\frac{\beta^2}{2\beta-\mu}-\frac{\beta^2(\beta -\mu )}{(2\beta -\mu )^2}\\
=\frac{\beta(\beta-\mu)^2}{(2\beta -\mu )^2}.
\]
Since $\beta-\mu =1-\|a\|_{\w}-\mu\ne 0$ for $\mu \in [0,\mu_*)$, we have $2\beta -\mu \ne 0$, this implies $\widehat{g}_1(\mu)>  0$, and  {thus}  $g'_1(\mu)> 0$ for $\mu\in [0,\mu_*)$. 
\end{proof}

Another function that will be encountered is   $g_2(\mu) =- \frac{\mu}{g(\mu)}$, $\mu\in (\mu_*,\beta]$. {We} have 
\begin{lemma}\label{lem:g2}
The function $g_2(\mu)$ is monotonically decreasing on  $(\mu_*,\beta]$. 
\end{lemma}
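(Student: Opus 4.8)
The plan is to mirror the argument used for Lemma~\ref{lem:g1}: reduce the monotonicity of $g_2$ to the sign of the numerator of its derivative, and then bound that numerator using the available norm estimates together with the sign information for $g(\mu)$ on $(\mu_*,\beta]$. First I would compute $g_2'(\mu)$ by the quotient rule. Writing $g_2(\mu)=-\mu/g(\mu)$, we get
\[
g_2'(\mu)=\frac{-g(\mu)+\mu g'(\mu)}{g(\mu)^2}=\frac{\widehat{g}_2(\mu)}{g(\mu)^2},
\qquad \widehat{g}_2(\mu):=\mu g'(\mu)-g(\mu).
\]
Since $g(\mu)^2>0$ for $\mu\in(\mu_*,\beta]$ (note $g(\mu)\ne 0$ there by Lemma~\ref{lem:g}), it suffices to show $\widehat{g}_2(\mu)<0$ on this interval. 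Using $g(\mu)=f(\mu)-\mu=\|(A-\mu I)^{-1}b\|_1-\mu$ and $g'(\mu)=f'(\mu)-1=\|(A-\mu I)^{-2}b\|_1-1$ from \eqref{eq:deri}, expand
\[
\widehat{g}_2(\mu)=\mu\|(A-\mu I)^{-2}b\|_1-\mu-\|(A-\mu I)^{-1}b\|_1+\mu
=\mu\|(A-\mu I)^{-2}b\|_1-\|(A-\mu I)^{-1}b\|_1.
\]

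So the claim is equivalent to $\mu\,\|(A-\mu I)^{-2}b\|_1<\|(A-\mu I)^{-1}b\|_1$ for $\mu\in(\mu_*,\beta]$. To see this, set $w=(A-\mu I)^{-1}b\ge 0$ (nonnegative since $A-\mu I$ is an invertible $M$-matrix for $\mu\in[0,\beta]$, as established before \eqref{eq:f}), so $\|w\|_1=f(\mu)$ and $(A-\mu I)^{-2}b=(A-\mu I)^{-1}w\ge 0$. Bounding via \eqref{norm1}, $\|(A-\mu I)^{-1}w\|_1\le \|(A-\mu I)^{-1}\|_1\|w\|_1\le \frac{1}{2\beta-\mu}f(\mu)$, hence
\[
\mu\,\|(A-\mu I)^{-2}b\|_1\le \frac{\mu}{2\beta-\mu}\,f(\mu).
\]
Thus it is enough that $\frac{\mu}{2\beta-\mu}f(\mu)<f(\mu)$, i.e. that $\frac{\mu}{2\beta-\mu}<1$, which holds because $\mu\le\beta<2\beta-\mu$ (using $\beta>0$ and $\mu<2\beta$). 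The only subtlety is that this chain needs $f(\mu)>0$ to cancel it; since $\mu>\mu_*\ge 0$ forces $f(\mu)=g(\mu)+\mu>\mu>0$ on $(\mu_*,\beta]$, this is fine, and in fact when $\mu_*=0$ one needs $\mu>0$ which holds on the open end; if $f(\mu)=0$ were possible the inequality $\widehat{g}_2(\mu)<0$ would still follow directly since then $\mu\|(A-\mu I)^{-2}b\|_1\le 0<\mu$. Therefore $\widehat{g}_2(\mu)<0$ and $g_2'(\mu)<0$ on $(\mu_*,\beta]$.

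The main obstacle I anticipate is purely bookkeeping: making sure the strict inequality survives the cancellation of $f(\mu)$, which requires confirming $f(\mu)>0$ on the half-open interval $(\mu_*,\beta]$ — this follows from $g(\mu)<0$ there (Lemma~\ref{lem:g}) giving $f(\mu)=g(\mu)+\mu$, combined with $\mu>0$ on that interval. Everything else is a direct computation paralleling Lemma~\ref{lem:g1}, relying only on $(A-\mu I)^{-1}\ge 0$, the norm bound \eqref{norm1}, and $b\ge 0$.
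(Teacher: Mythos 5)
Your argument is essentially identical to the paper's: both reduce the claim to the sign of the numerator $\mu\|(A-\mu I)^{-2}b\|_1-\|(A-\mu I)^{-1}b\|_1$ and bound it using $\|(A-\mu I)^{-1}\|_1\le\frac{1}{2-2\|a\|_{\w}-\mu}=\frac{1}{2\beta-\mu}$. One small slip: your chain $\mu\le\beta<2\beta-\mu$ fails at the endpoint $\mu=\beta$ (there $2\beta-\mu=\beta$, so $\frac{\mu}{2\beta-\mu}=1$), so you only get $\widehat{g}_2(\mu)\le 0$ rather than strictly negative on all of $(\mu_*,\beta]$; this is still enough for monotone decreasing, and is in fact exactly the non-strict inequality the paper proves.
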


\begin{proof}
We show $g_2'(\mu)\le 0$ for $\mu\in (\mu_*,\beta]$. A direct computation yields
	$$g'_2(\mu) = \frac{\mu\|(A-\mu I)^{-2}b\|_1 - \|(A-\mu I)^{-1}b\|_1 }{g^2(\mu)}.$$
Observe that 
	\begin{align*}
		 \mu \|(A-\mu I)^{-2}b\|_1 - \|(A-\mu I)^{-1}b\|_1
		&\le \|(A-\mu I)^{-1}b\|_1(\mu \|(A-\mu I)^{-1}\|_1 - 1)\\
		&\le  \|(A-\mu I)^{-1}b\|_1\Big(\frac{\mu}{2-2\|a\|_{\w}-\mu}-1\Big )\\
		&\leq 2\|(A-\mu I)^{-1}b\|_1\frac{(\mu+\|a\|_{\w }-1)}{2-2\|a\|_{\w}-\mu}\\
		&\le 0,
	\end{align*}
	where the last inequality follows from the fact that  $\mu \leq 1- \|a\|_{\w }$. Hence, $g'_2(\mu) \le 0$ for $\mu \in (\mu_*,\beta ]$. 
\end{proof}

Consider the  function 
\[
 {h_{\tau}(\mu)}=\tau \| (A-\mu I)^{-1}b\|_1+(1-\tau)\mu,\ \mu\in [0, \beta].
\]
 We study  properties of  $h_{\tau}(\mu)$ since this function will be used in the  analysis of a relaxed fixed-point iteration.   Observe that $\|(A-\beta I)^{-1}b\|_1\leq \frac{\|b\|_1}{1-\|a\|_{\w}}<1-\|a\|_{\w}$, from which we obtain $\beta-\|(A-\beta I)^{-1}b\|_1>0$, so that $\frac{\beta}{\beta-\|(A-\beta I)^{-1}b\|_1}$ is well-defined, and it is clear that  $\frac{\beta}{\beta-\|(A-\beta I)^{-1}b\|_1}>1$. 

\begin{lemma}\label{lem:closed}
For $0\leq \tau\leq \min\{\frac{\beta}{\|A^{-1}b\|_1}, \frac{\beta}{\beta-\|(A-\beta I)^{-1}b\|_1}\}$ it holds that  $ {h_{\tau}(\mu)}\in [0,\beta]$ for any $\mu \in [0,\beta]$. 
\end{lemma}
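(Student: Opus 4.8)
The plan is to show that $h$ maps $[0,\beta]$ into itself by bounding $h(\mu)$ above and below on the interval. Since $(A-\mu I)^{-1}\geq 0$ and $b\geq 0$, the quantity $\|(A-\mu I)^{-1}b\|_1$ is nonnegative, and $\mu\geq 0$, so with $\tau\in[0,1]$ (which is implied by $\tau\leq \beta/\|A^{-1}b\|_1$ together with $g(0)>0$, i.e. $\|A^{-1}b\|_1>\beta$ would force $\tau<1$, but in general we should just check $\tau\le 1$ follows from the hypotheses, or argue directly) we get $h(\mu)\geq 0$ immediately. So the lower bound $h(\mu)\geq 0$ is the easy half. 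The real content is the upper bound $h(\mu)\leq \beta$.

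For the upper bound, I would write $h(\mu)-\beta = \tau\big(\|(A-\mu I)^{-1}b\|_1 - \mu\big) + (1-\tau)(\mu - \beta) = \tau g(\mu) + (1-\tau)(\mu-\beta)$, using the notation $g(\mu)=f(\mu)-\mu$ from the paper. On $[0,\mu_*]$ we have $g(\mu)\geq 0$ by Lemma~\ref{lem:g}, so I need the bound on $g(\mu)$ coming from $g_1$: since $g_1(\mu)=\frac{\beta-\mu}{g(\mu)}$ is increasing on $[0,\mu_*)$ by Lemma~\ref{lem:g1}, its minimum is at $\mu=0$, giving $g_1(0)=\beta/g(0)=\beta/\|A^{-1}b\|_1$, hence $g(\mu)\leq \frac{(\beta-\mu)\|A^{-1}b\|_1}{\beta}$ for $\mu\in[0,\mu_*)$. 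Plugging in, $h(\mu)-\beta \leq \tau\frac{(\beta-\mu)\|A^{-1}b\|_1}{\beta} - (1-\tau)(\beta-\mu) = (\beta-\mu)\big(\frac{\tau\|A^{-1}b\|_1}{\beta} - 1 + \tau\big)$. The factor $\beta-\mu\geq 0$, so it suffices that $\tau\|A^{-1}b\|_1/\beta + \tau \leq 1$... but actually the hypothesis only gives $\tau\|A^{-1}b\|_1/\beta\leq 1$, so I should be more careful: use instead $\tau g(\mu) \le \tau(\beta-\mu)$ directly when... no. The cleaner route: on $[0,\mu_*)$, from $g_1$ monotonic, $g(\mu)\le \beta-\mu$ would require $g_1(\mu)\ge 1$, which holds since $g_1$ is increasing and $g_1(0)=\beta/\|A^{-1}b\|_1$; if $\|A^{-1}b\|_1\le\beta$ then $g_1(0)\ge1$ and we're fine, and $\tau\le1$; if $\|A^{-1}b\|_1>\beta$ then $\tau<1$ and one uses $g(\mu)\le \frac{(\beta-\mu)\|A^{-1}b\|_1}{\beta}$ to get $\tau g(\mu)\le\frac{\tau\|A^{-1}b\|_1}{\beta}(\beta-\mu)\le(\beta-\mu)$, and then $h(\mu)-\beta\le(\beta-\mu)-(1-\tau)(\beta-\mu)=\tau(\beta-\mu)$... that's not $\le0$ either. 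So the decomposition must be handled per-subinterval, matching the two terms in the min.

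On $[\mu_*,\beta]$, where $g(\mu)\leq 0$, I would instead use $g_2(\mu)=-\mu/g(\mu)$, which is decreasing by Lemma~\ref{lem:g2}, so $g_2(\mu)\geq g_2(\beta) = -\beta/g(\beta) = \beta/(\beta - \|(A-\beta I)^{-1}b\|_1)$, giving $-\mu/g(\mu)\geq \beta/(\beta-\|(A-\beta I)^{-1}b\|_1)$, i.e. $-g(\mu)\leq \frac{\mu(\beta-\|(A-\beta I)^{-1}b\|_1)}{\beta}$, equivalently $g(\mu)\geq -\frac{\mu}{\tau^{-1}_{\max}}$ where $\tau^{-1}_{\max} = \beta/(\beta-\|(A-\beta I)^{-1}b\|_1)$ is the second term in the min. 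Then $h(\mu)-\beta = \tau g(\mu)+(1-\tau)(\mu-\beta)$; here $\mu-\beta\le0$ and $\tau g(\mu)\le 0$... again need to combine. Actually $h(\mu)\le\beta \iff \tau g(\mu)\le(1-\tau)(\beta-\mu)+0$; since RHS $\ge0$ and $g(\mu)\le0$, LHS $\le0\le$ RHS, \emph{done trivially} on $[\mu_*,\beta]$ — so the $g_2$ bound and the second term of the min are actually needed for the lower bound $h(\mu)\ge0$ on that subinterval, not the upper bound. I will reorganize: upper bound $h(\mu)\le\beta$ is automatic on $[\mu_*,\beta]$ and needs the $g_1$ estimate (plus $\tau\le1$, which I must verify) on $[0,\mu_*]$; lower bound $h(\mu)\ge0$ is automatic on $[0,\mu_*]$ and needs the $g_2$ estimate on $[\mu_*,\beta]$.

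The main obstacle I anticipate is exactly this bookkeeping: correctly splitting at $\mu_*$, identifying which of the two bounds in the min controls which subinterval for which inequality ($h\ge0$ versus $h\le\beta$), and confirming that $\tau\le1$ is a consequence of the stated constraint (since $g(0)=\|A^{-1}b\|_1>0$ need not exceed $\beta$, one may need the companion observation that $\beta-\|(A-\beta I)^{-1}b\|_1<\beta$ forces the second term $>1$, and separately argue $\tau\le1$ or note it is harmless). Once the interval split is set up, each piece reduces to the monotonicity facts in Lemmas~\ref{lem:g1} and \ref{lem:g2} evaluated at an endpoint, which is routine.
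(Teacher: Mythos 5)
Your reorganization in the final paragraph lands on the correct architecture, which is exactly the paper's: split at $\mu_*$; the lower bound $h(\mu)\geq 0$ is trivial on $[0,\mu_*]$ and follows from the $g_2$ estimate on $(\mu_*,\beta]$; the upper bound $h(\mu)\leq\beta$ is trivial on $[\mu_*,\beta]$ and follows from the $g_1$ estimate on $[0,\mu_*)$; and each of the two quantities in the $\min$ controls one of these two nontrivial pieces. So the plan is right.

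However, the actual computation never closes because the identity you start from, $h(\mu)-\beta = \tau g(\mu) + (1-\tau)(\mu-\beta)$, is wrong. Expanding the right-hand side gives $\tau f(\mu) + (1-2\tau)\mu - (1-\tau)\beta$, whereas $h(\mu)-\beta = \tau f(\mu)+(1-\tau)\mu-\beta$; the two differ by $\tau(\beta-\mu)$. The correct (and much cleaner) identity is simply
\[
h(\mu) = \tau g(\mu) + \mu,
\]
since $\tau\bigl(f(\mu)-\mu\bigr)+\mu = \tau f(\mu)+(1-\tau)\mu$. With this, the nontrivial halves are immediate and match the paper: on $[0,\mu_*)$, $g_1$ increasing gives $\tau \leq g_1(0)\leq g_1(\mu) = (\beta-\mu)/g(\mu)$, hence (using $g(\mu)>0$) $\tau g(\mu)\leq \beta-\mu$ and $h(\mu)=\tau g(\mu)+\mu\leq\beta$; on $(\mu_*,\beta]$, $g_2$ decreasing gives $\tau\leq g_2(\beta)\leq g_2(\mu)=-\mu/g(\mu)$, hence (using $g(\mu)\leq 0$) $\tau g(\mu)\geq -\mu$ and $h(\mu)\geq 0$. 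Once the identity is corrected, the spurious $+\tau$ term in your estimate vanishes, and your recurring worry about whether $\tau\leq 1$ must be verified evaporates — no such bound is needed here (indeed Lemma~\ref{lem:mu} allows $\tau$ up to $2$). That unresolved $\tau\leq 1$ question was a symptom of the algebraic slip, not a real obstacle.
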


\begin{proof}
Observe that   ${h_{\tau}(\mu)}= \tau g(\mu ) + \mu$ and $h_{\tau}(\mu_*)=\tau g(\mu_*)+\mu_*=\mu_*\in [0,\beta]$. We next show that ${h_{\tau}(\mu)}\in [0,\beta]$ for $\mu\in [0,\mu_*)$ and $\mu\in (\mu_*,\beta]$, respectively.  

For $\mu\in [0,\mu_*)$,  we have from Lemma \ref{lem:g} that $g(\mu)>0$, so that ${h_{\tau}(\mu)} \ge \mu\geq  0$. It is left to show  ${h_{\tau}(\mu)}\le \beta$. In view of Lemma \ref{lem:g1}, we know that the function $g_1(\mu)=\frac{\beta-\mu}{g(\mu)}$ satisfies   $g_1(\mu) \ge g_1(0)=\frac{\beta }{\|A^{-1}b\|_1}$. Since $\tau \leq \frac{\beta }{\|A^{-1}b\|_1}$, it follows that  
\[
\tau \le  g_1(\mu)=\frac{1-\|a\|_{\w}-\mu}{g(\mu)},
\]
together with the fact that $g(\mu)>0$ for $\mu\in [0,\mu_*)$, we obtain 
\[
\tau g(\mu)+\mu \le 1-\|a\|_{\w},
\]
that is,  $ {h_{\tau}(\mu)}\leq \beta$ for $\mu \in [0,\mu_*)$.

Concerning the case where $\mu\in (\mu_{* }, \beta ]$, it follows from Lemma \ref{lem:g} that $g(\mu)\le 0$,   we obtain  $ h_{\tau}(\mu)\leq \mu  \leq \beta $. To  show  $h_{\tau}(\mu)\ge 0$,  
	consider the function $g_2(\mu) =- \frac{\mu}{g(\mu)}$. In view of Lemma \ref{lem:g2}, we have $g_2(\mu) \ge g_2(\beta )= \frac{\beta}{\beta - \|(A-\beta I)^{-1}b\|_1}\geq \tau$.  Together with the fact that $g(\mu)\leq 0$ for $\mu\in (\mu_*, \beta]$, we have 
	\[
	h_{\tau}(\mu)=\tau g(\mu)+\mu \geq g_2(\mu)g(\mu)+\mu =0.
	\] 
The proof is complete.
\end{proof}

\subsection{Related issues in graph theory}\label{lap}
The square root of Laplacian matrices in graph theory plays an important role in fractional diffusion on networks \cite{lap,lap2}. 
We consider the case where the Laplacian matrix is of the form $L=W-\one v^T$, where the matrix $W$ is a block upper triangular $M$-matrix and $v$ is a nontrivial nonnegative vector. For instance,  consider the undirected graph in Figure \ref{fig:sto2}, 
\begin{figure}
\center
  \includegraphics[width=0.6\textwidth]{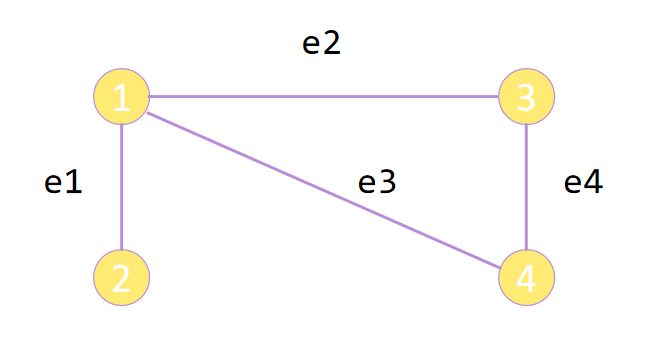}
\caption{An undirected graph.}\label{fig:sto2}     
\end{figure}
the Laplacian matrix is 
\[
L=\begin{pmatrix}
3& -1 & -1 & -1\\
-1 & 1 & 0 & 0\\
-1& 0 & 2 & -1\\
-1 & 0& -1 & 2
\end{pmatrix}=W-\one v^T,
\]
where $W=\begin{pmatrix}
4& -1 & -1 & -1\\
0 & 1 & 0 & 0\\
0& 0 & 2 & -1\\
0 & 0& -1 & 2
\end{pmatrix}$ is a block upper triangular $M$-matrix and $v=(1,0,0,0)^T$. 

Since the matrix $L$ here is a singular $M$-matrix, efficient methods to compute $L^{\frac12}$ would be Cyclic reduction (CR) or the Incremental
Newton (IN) iteration, both of which are  variants of the Newton's
method. However, the singularity of $L$ leads to linear convergence. In this section, we show that $L$ admits a unique square root $V-\one x^T$, where $x$ solves an equation in the format of \eqref{eq:equation1}. Applying the theoretical analysis for equation \eqref{eq:equation1},  we obtain that $L^{\frac12}=W^{\frac12}-\one v^TW^{-\frac12}$ (see the next Theorem \ref{thm:lap}).  Consequently, we can compute $W^{\frac12}$ and $W^{-\frac12}$ efficiently, and since $W$ is a nonsingular $M$-matrix,  quadratic convergence can be achieved in the iterative computation.

Observe that $W$ admits a unique square root that is also an $M$-matrix and $v$ is a nonnegative vector,  we show that  $L=W-\mathbf{1}v^T = (V - \mathbf{1}x^T)^2$, where  $V$ is the principal  square root of $W$ and $x$ is a nonnegative vector.  Without loss of generality, we write $L$ as $L=\ell (I-W_1-\one \tilde{v}^T)$. This way, we have $\tilde{v}=\frac{1}{\ell}v\ge 0$, and it follows from   $W=\ell(I-W_1) $ is an $M$-matrix that  $W_1\ge 0$ and $\rho(W_1)\le 1$. According to \cite[page 165]{higham}, we know that a natural iteration to compute the square root of $I-W_1-\one \tilde{v}^T$ is 
\begin{equation}\label{binomial}
X_{k+1}=\frac12(W_1+\one \tilde{v}^T+X_k^2),\quad X_0=0. 
\end{equation}
We have the following
\begin{lemma}\label{vge}
For a Laplacian matrix $L=\ell (I-W_1-\one \tilde{v}^T)$, if $W_1\geq 0$ and  $\tilde{v}\ge 0$, then the sequence $\{X_k\}$ generated by \eqref{binomial} satisfies $X_k=S_k+\one x_k^T$, where $S_k\geq 0$ and $S_k\one =s_k \one $  for some $s_k\ge 0$, $s_k\in \mathbb R$, and $x_k\geq 0$, $x_k\in \mathbb R^n$. 
\end{lemma}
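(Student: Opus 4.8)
The plan is to proceed by induction on $k$, showing that the claimed structure $X_k = S_k + \one x_k^T$ with $S_k \geq 0$, $S_k\one = s_k\one$ for some scalar $s_k \geq 0$, and $x_k \geq 0$, is preserved by the iteration \eqref{binomial}. The base case $k=0$ is immediate: $X_0 = 0$, so take $S_0 = 0$, $s_0 = 0$, and $x_0 = 0$, all of which trivially satisfy the requirements. For the inductive step, I assume $X_k = S_k + \one x_k^T$ has the stated form and compute $X_k^2$ and then $X_{k+1} = \tfrac12(W_1 + \one\tilde v^T + X_k^2)$.

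First I would expand $X_k^2 = (S_k + \one x_k^T)(S_k + \one x_k^T) = S_k^2 + S_k\one x_k^T + \one x_k^T S_k + \one x_k^T\one x_k^T$. Using the induction hypothesis $S_k\one = s_k\one$, the second term becomes $s_k\one x_k^T$, and the last term is $(x_k^T\one)\one x_k^T = \|x_k\|_1\, \one x_k^T$ since $x_k \geq 0$. Collecting, $X_k^2 = S_k^2 + \one\big(x_k^T S_k\big) + (s_k + \|x_k\|_1)\one x_k^T = S_k^2 + \one\big(x_k^T S_k + (s_k + \|x_k\|_1)x_k^T\big)$. Hence
\[
X_{k+1} = \tfrac12 W_1 + \tfrac12 S_k^2 + \tfrac12\one\Big(\tilde v^T + x_k^T S_k + (s_k + \|x_k\|_1)x_k^T\Big).
\]
So I set $S_{k+1} = \tfrac12(W_1 + S_k^2)$ and $x_{k+1} = \tfrac12\big(\tilde v + S_k^T x_k + (s_k + \|x_k\|_1)x_k\big)$. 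Nonnegativity of $S_{k+1}$ follows from $W_1 \geq 0$ and $S_k \geq 0$; nonnegativity of $x_{k+1}$ follows from $\tilde v \geq 0$, $S_k \geq 0$, $x_k \geq 0$, and $s_k \geq 0$. It remains to check that $S_{k+1}\one$ is a nonnegative multiple of $\one$: here I use the key structural fact that $W_1$ itself has constant row sums. Indeed, $L = \ell(I - W_1 - \one\tilde v^T)$ is a Laplacian, so $L\one = 0$, giving $W_1\one + \one\tilde v^T\one = \one$, i.e. $W_1\one = (1 - \|\tilde v\|_1)\one$. Then $S_{k+1}\one = \tfrac12(W_1\one + S_k^2\one) = \tfrac12\big((1-\|\tilde v\|_1)\one + s_k^2\one\big) = \tfrac12(1 - \|\tilde v\|_1 + s_k^2)\one$, so $s_{k+1} = \tfrac12(1 - \|\tilde v\|_1 + s_k^2)$, which is nonnegative provided $\|\tilde v\|_1 \leq 1$ (equivalently $0 \leq s_k^2 + (1 - \|\tilde v\|_1)$); this holds because $\tilde v = \tfrac1\ell v$ comes from the off-diagonal entries of the Laplacian row and $W_1 \geq 0$ forces $1 - \|\tilde v\|_1 = $ (the common row sum of $W_1$) $\geq 0$.

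The main obstacle I anticipate is not the algebra of the induction but making sure the row-sum bookkeeping is airtight — specifically, verifying that $W_1$ genuinely has constant row sums and that this constant is nonnegative. This is where the Laplacian hypothesis (as opposed to a generic block upper triangular $M$-matrix) is essential: the relation $L\one = 0$ is exactly what forces $W\one = \one v^T\one$, i.e. $W$ has constant row sums, which then descends to $W_1$. I would state this as a preliminary observation before the induction. Everything else — nonnegativity propagation and the explicit recursions for $S_k$, $x_k$, $s_k$ — is routine once that observation is in place, and the induction then closes immediately.
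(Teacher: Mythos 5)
Your proof is correct and takes essentially the same inductive route as the paper: expand $X_k^2$, identify the recursions $S_{k+1}=\tfrac12(W_1+S_k^2)$ and $x_{k+1}=\tfrac12\big((s_k+\|x_k\|_1)I+S_k^T\big)x_k+\tfrac12\tilde v$, and use $L\one=0$ together with $W_1\geq 0$ to get $W_1\one=(1-\|\tilde v\|_1)\one\geq 0$ and hence $s_{k+1}=\tfrac12(1-\|\tilde v\|_1+s_k^2)\geq 0$. The only cosmetic difference is that you anchor the induction at $k=0$ with $X_0=0$ whereas the paper starts from the computed $X_1$; the substance is identical.
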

\begin{proof}
 For $k=0$, we have $X_1=\frac12W_1+\frac12\one \tilde{v}^T$, from which we see that $S_1=\frac12W_1\geq 0$ and $x_1=\frac12 \tilde{v}\geq 0$.  Moreover, we have $S_1\one =\frac{1}{2}W_1\one$. Since $L\one =0$ and $\tilde{v}=\frac{1}{\ell}v\ge 0$, we obtain $W_1\one + \|\tilde{v}\|_1\one =\one $, and it follows from $W_1\ge 0$ that $W_1\one =(1-\|\tilde{v}\|_1)\one \geq 0$, so that $s_1=\frac12(1-\|\tilde{v}\|_1)\ge 0$. 
  
  Suppose $X_k=S_k+\one x_k^T$, where $S_k\geq 0$, $S_k\one =s_k\one$ for some $s_k\ge 0$, and $x_k\geq 0$, then we have
    \[
    \begin{aligned}
    X_{k+1}&=\frac12 (W_1+\one \tilde{v}^T+S_k^2+S_k\one x_k^T+\one x_k^TS_k+\one x_k^T\one x_k^T)\\
    &= \frac{1}{2}(W_1+S_k^2) +\frac12\one (\tilde{v}^T+s_kx_k^T+x_k^TS_k+\|x_k\|_1x_k^T),
   \end{aligned}
 \]
    from which one can see that $S_{k+1}=\frac12(W_1+S_k^2)\geq 0$ and $S_{k+1}\one =s_{k+1}\one $, where $s_{k+1}=\frac12(1-\|\tilde{v}\|_1+s_k^2)\ge 0$. Moreover, it can be seen that $x_{k+1}=\frac12((s_k+\|x_k\|_1)I+S_k^T)x_k+\frac12 \tilde{v}\geq 0$. 
\end{proof}

Observe that the sequence $\{S_k\}$ satisfies $S_{k+1}=\frac12(W_1+S_k^2), \ S_0=0$, according to \cite[page 165]{higham}, we know that $\{S_k\}$ converges to $V_1$ such that $(I-V_1)^2=I-W_1$.  On the other hand, according to the convergence of the sequence $S_k+\one x_k^T$, it can be seen that there is $\tilde{x}\geq 0$ such that $x_k$ converges to $\tilde{x}$. Hence, we have  $I-W_1-\one \tilde{v}^T=(I-V_1-\one \tilde{x}^T)^2$,  from which we know that $W-\one v^T=(V-\one x^T)^2$, where $V$ is the square root of $W$ and $x\geq 0$. Then we have   
\begin{align}\label{eq:lap0}
	W-\mathbf{1}v^T = V^2 - V\mathbf{1}x^T - \mathbf{1}x^TV + \mathbf{1}x^T\mathbf{1}x^T. 
\end{align}

Since for a Laplacian matrix $L=W-\one v^T$, we have $L\bf 1=0$, it follows that  $W {\bf 1} = \|v\|_1 \mathbf{1}$ if the vector $v$ is nonnegative. It can be verified analogously to \cite[Theorem 3.9]{arxiv_jie} that  $(V-\one x^T)\one =0$ and $V\mathbf{1} = \nu \mathbf{1}$, where  $\nu = \sqrt{\|v\|_1}$. Then equation \eqref{eq:lap0} is equivalent to 
\begin{equation}\label{eq:lap}
    (\nu I + V^T)x - \|x\|_1x = v. 
\end{equation}
 According to Lemma \ref{vge}, we know that $x\geq 0$ if $v\geq 0$, then we have from  $(V-\one x^T)\one =0$ that $\|x\|_1=\nu $. If $W$ is invertible, it follows that $V$ is invertible, then we obtain from \eqref{eq:lap} that $x=(V^T)^{-1}v$. 
Analogously to  Lemma \ref{lem:mu0}, we deduce that 
\begin{theorem}\label{thm:lap}
For a  Laplacian matrix $L=W-\one v^T$, where $W$ is an invertible $M$-matrix and $v\geq 0$, then equation \eqref{eq:lap} has a solution $x_*=( V^T)^{-1}v$, where $V$ is the principal square root of $W$. Moreover, let $\nu=\sqrt{\|v\|_1}$, then $\|x_*\|_1=\nu$ and $\nu$ solves the  equation of $\mu$, which is 
 \begin{equation}\label{eq:lmu}
 \mu=\|((\nu -\mu) I + V^T)^{-1}v\|_1. 
 \end{equation}
\end{theorem}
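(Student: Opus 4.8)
The plan is to assemble the facts already obtained in the discussion preceding the statement and to mirror the argument of Lemma \ref{lem:mu0}. First I would record that the principal square root $V$ of the invertible $M$-matrix $W$ is again an invertible $M$-matrix, so that $(V^T)^{-1}$ exists; this is what legitimizes writing $x_*=(V^T)^{-1}v$ at all. Next I would invoke Lemma \ref{vge} together with the convergence of the binomial iteration \eqref{binomial} recalled just before the statement: the sequences $\{S_k\}$ and $\{x_k\}$ converge, their limits yield $I-W_1-\one\tilde v^T=(I-V_1-\one\tilde x^T)^2$ with $\tilde x\geq 0$, hence $W-\one v^T=(V-\one x^T)^2$ with $x:=\tilde x\geq 0$. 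Expanding this square gives \eqref{eq:lap0}, and using $(V-\one x^T)\one=0$ and $V\one=\nu\one$ (verified analogously to \cite[Theorem 3.9]{arxiv_jie}, with $W\one=\|v\|_1\one$ because $v\geq 0$) it reduces to \eqref{eq:lap}; moreover $(V-\one x^T)\one=0$ forces $\|x\|_1=\nu$. Substituting $\|x\|_1=\nu$ into \eqref{eq:lap} and cancelling the $\nu x$ terms leaves $V^Tx=v$, i.e. $x=(V^T)^{-1}v$. Thus $x_*=(V^T)^{-1}v\geq 0$ is a solution of \eqref{eq:lap}.

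For the norm identity I would argue directly rather than through the iteration: from $V\one=\nu\one$ we get $V^{-1}\one=\nu^{-1}\one$, hence, with $e=\one$ and $\nu^2=\|v\|_1$,
\[
\|x_*\|_1=e^Tx_*=e^T(V^T)^{-1}v=(V^{-1}e)^Tv=\nu^{-1}e^Tv=\nu^{-1}\|v\|_1=\nu,
\]
the first equality using $x_*\geq 0$.

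Finally, for \eqref{eq:lmu} I would proceed exactly as in Lemma \ref{lem:mu0}, viewing \eqref{eq:lap} as an instance of \eqref{eq:equation1} with $A=\nu I+V^T$ and $b=v$: a solution $x$ with $\mu_x=\|x\|_1$ satisfies $x=(A-\mu_xI)^{-1}b=((\nu-\mu_x)I+V^T)^{-1}v$, whence $\mu_x=\|((\nu-\mu_x)I+V^T)^{-1}v\|_1$. Taking $x=x_*$, so $\mu_x=\nu$, shows that $\nu$ solves \eqref{eq:lmu}; equivalently, one just plugs $\mu=\nu$ into the right-hand side of \eqref{eq:lmu} and reads off $\|(V^T)^{-1}v\|_1=\|x_*\|_1=\nu$.

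I expect the only genuine subtlety to be the mild circularity in the first paragraph: one cannot write down $x_*=(V^T)^{-1}v$ and check that it solves \eqref{eq:lap} without first knowing $\|x_*\|_1=\nu$, and that value is pinned down precisely by the geometric constraint $(V-\one x^T)\one=0$ coming from $L\one=0$ together with $V\one=\nu\one$. Everything else---invertibility of $V$, nonnegativity of $x_*$, and the passage from $\|v\|_1$ to $\nu$---is routine given Lemma \ref{vge} and the cited facts, so the proof is essentially a matter of organizing material already developed above.
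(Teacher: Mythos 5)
Your proposal is correct and takes essentially the same route as the paper: existence and nonnegativity of $x$ from Lemma \ref{vge}, $\|x\|_1=\nu$ from the geometric constraint $(V-\one x^T)\one=0$ together with $V\one=\nu\one$, then $x=(V^T)^{-1}v$ by substituting $\|x\|_1=\nu$ into \eqref{eq:lap}, with \eqref{eq:lmu} following exactly as in Lemma \ref{lem:mu0}. Your extra direct computation $e^T(V^T)^{-1}v=\nu^{-1}\|v\|_1=\nu$ via $V^{-1}\one=\nu^{-1}\one$ is a nice independent confirmation of the norm identity but does not change the underlying argument.
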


The following lemma shows that $\nu$ is the only solution of  equation \eqref{eq:lmu} in the interval $[0,\nu]$, which, in turn, confirms that $V- \one v^TV^{-1}$ is the unique square root of $L$ that is also an $M$-matrix. 

\begin{lemma}\label{lem:unique}
If $W$ is an invertible $M$-matrix and $v\ge 0$ is nontrivial,   then equation \eqref{eq:lmu} has one and only one solution in the interval $[0,\nu]$. 
\end{lemma}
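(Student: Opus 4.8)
The plan is to recast \eqref{eq:lmu} as a scalar fixed-point problem, exactly as in the proof of Theorem~\ref{thm:solution}. Define
\[
\phi(\mu)=\|((\nu-\mu)I+V^T)^{-1}v\|_1,\qquad \mu\in[0,\nu],
\]
so that the claim is that $\phi$ has a unique fixed point in $[0,\nu]$. If $v=0$, then $\nu=\sqrt{\|v\|_1}=0$, the interval degenerates to $\{0\}$, and $\mu=0$ is trivially the only solution; so I would assume $\nu>0$ from here on.

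First I would check that $\phi$ is well defined and takes nonnegative values. Since $W$ is an invertible $M$-matrix, its principal square root $V$ is an invertible $M$-matrix, hence so is $V^T$; writing $V^T=sI-B$ with $B\ge 0$ and $s>\rho(B)$, for every $\mu\in[0,\nu]$ we have $(\nu-\mu)I+V^T=(s+\nu-\mu)I-B$ with $s+\nu-\mu\ge s>\rho(B)$, so $(\nu-\mu)I+V^T$ is an invertible $M$-matrix and its inverse is nonnegative. Together with $v\ge 0$ this shows that $y:=((\nu-\mu)I+V^T)^{-1}v\ge 0$, and therefore $\phi(\mu)=\|y\|_1=\one^T y$.

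The key step is to evaluate $\phi$ in closed form, using the structural identities recorded before Theorem~\ref{thm:lap}: $V\one=\nu\one$ (hence $\one^T V^T=(V\one)^T=\nu\one^T$) and $\|v\|_1=\nu^2$. Premultiplying $((\nu-\mu)I+V^T)y=v$ by $\one^T$ gives $(2\nu-\mu)\,\one^T y=\one^T v=\|v\|_1=\nu^2$; since $2\nu-\mu\ge\nu>0$ on $[0,\nu]$ and $y\ge 0$, this yields
\[
\phi(\mu)=\|y\|_1=\frac{\nu^2}{2\nu-\mu}.
\]
Consequently $\mu=\phi(\mu)$ is equivalent to $\mu(2\nu-\mu)=\nu^2$, i.e.\ $(\nu-\mu)^2=0$, whose only root is $\mu=\nu$, which indeed lies in $[0,\nu]$. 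This simultaneously reconfirms existence (consistent with Theorem~\ref{thm:lap}) and gives uniqueness. I do not anticipate a genuine obstacle here; the only points needing care are the $M$-matrix bookkeeping that guarantees $((\nu-\mu)I+V^T)^{-1}\ge 0$ (so that $\|y\|_1=\one^T y$) and the remark that $2\nu-\mu$ is bounded away from $0$ on $[0,\nu]$. Alternatively one could mimic Theorem~\ref{thm:solution} more literally by showing $\phi'(\mu)=\nu^2/(2\nu-\mu)^2\le 1$ on $[0,\nu]$, so that $\mu\mapsto\phi(\mu)-\mu$ is monotonically decreasing, but the explicit formula above is shorter and self-contained.
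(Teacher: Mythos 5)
Your proposal is correct and takes essentially the same route as the paper: both use $V\one=\nu\one$ and $\|v\|_1=\nu^2$ to obtain the closed form $\|((\nu-\mu)I+V^T)^{-1}v\|_1=\frac{\nu^2}{2\nu-\mu}$, and then observe that the resulting scalar equation has $\mu=\nu$ as its unique root in $[0,\nu]$ (you solve the quadratic directly, the paper invokes the AM--GM inequality, but these are trivially equivalent since $\frac{\nu^2}{2\nu-\mu}-\mu=\frac{(\nu-\mu)^2}{2\nu-\mu}$).
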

\begin{proof}
Define $f(\mu) = \|((\nu-\mu)I + V^T)^{-1}v\|_1 - \mu$ for $\mu\in [0,\nu]$. Since $W$ is an invertible $M$-matrix and $V$ is the principal square root of $W$, it can be seen that  $(\nu -\mu)I + V^T$ is an invertible $M$-matrix for $\mu\in [0,\nu]$ and $((\nu -\mu)I + V^T)^{-1} \ge 0$.  Recall that  $V\one =\nu  \one$, from which one can prove that  $\one ^T((\nu -\mu)I + V^T)^{-1}=\frac{1}{2 \nu  -\mu}\one^T$ and 
\begin{align}\label{eq:1}
	\|((\nu -\mu)I + V^T)^{-1}v\|_1 = \sum_{i = 1}^{n} \frac{v_i}{2\nu -\mu} = \frac{\nu^2}{2\nu -\mu}, 
\end{align}
where the  equality follows from the fact that $\sum_{i=1}^nv_i=\|v\|_1=\nu^2$. Then,  we have from \eqref{eq:1} that $f(\mu) = \frac{\nu^2}{2\nu-\mu} - \mu$, $\mu\in [0,\nu]$. Observe that   $\frac{\nu^2}{2\nu-\mu} - \mu + 2\nu\geq 2\sqrt{\nu^2}=2\nu$, and the equality holds if $\mu = \nu$. It follows that  $f(\mu) = \frac{\nu^2}{2\nu-\mu} - \mu\geq 0$ for $\mu\in[0,\nu]$ and  $f(\mu)=0$ if and only if $\mu=\nu$. Hence, the function $f(\mu)$ has a unique root $\nu$ in the interval $[0,\nu]$.
\end{proof}

In view of  Theorem \ref{thm:lap} and Lemma  \ref{lem:unique}, we know that if a Laplacian matrix $L=W-\one v^T$ where $W=\begin{pmatrix} W_{1}& W_2\\O& W_3\end{pmatrix}$ is an invertible block upper triangular $M$-matrix, then  $L^{\frac12}=W^{\frac12}-\one v^TW^{-\frac12}$. Hence, instead of computing the square root of $L$, we may compute the square root of $W$. This way, the iteration of Denman and Beavers (DB) \cite{DB,higham}, defined as 
\begin{align*}
X_{k+1}&=\frac12 (X_k+Y_k^{-1}),\quad X_0=W,\\
Y_{k+1}&=\frac12 (Y_k+X_k^{-1}), \quad Y_0=I,
\end{align*}
serves as a good choice as it simultaneously yields  $W^{\frac12}$ and $W^{-\frac12}$, with  $\lim_{k\rightarrow \infty}X_k=W^{\frac12}$ and $\lim_{k\rightarrow \infty}Y_k=W^{-\frac12}$. 

\section{Numerical treatment}\label{comp}

The generalized equation in Theorem \ref{thm:solution2} can be reduced to the canonical form \eqref{eq:equation1} via a linear scaling by introducing the transformed variables $\tilde{x} = x/\beta$, $\tilde{A} = A/\beta$, and $\tilde{b} = b/\beta^2$. This transformation preserves the structural properties of \eqref{eq:equation1}, with the exception of the boundary case $\|b\|_1 = (1 - \|a\|_w)^2$, which corresponds to a numerical singularity where the iterative operator may lose its strict contraction property. To maintain conciseness and remain consistent with the motivating application of quasi-Toeplitz $M$-matrix square roots, our subsequent algorithmic analysis focuses on the standard form \eqref{eq:equation1} under the strict inequality in \eqref{eq:ba}. The treatment of the critical case, which involves more delicate convergence analysis, is deferred to future work.

 We  design and analyze the convergence of a relaxed fixed-point iteration and Newton's iteration, based on the  rational function $f(\mu)=\|(A-\mu I)^{-1}b\|_1$. Then, we show that a structure-preserving doubling algorithm can be applied,  prove that the sequences obtained by the doubling algorithm are well-defined and that the convergence is at least linear  with rate 1/2.

\subsection{A relaxed fixed-point iteration}

In \cite{arxiv_jie}, the fixed-point iteration 
\begin{equation} \label{fixed1}
	x_{k+1} = (A-\mu_k I)^{-1}b, \quad x_0 = 0,
\end{equation}
where $\mu_k= \|x_k\|_1$,  has been proposed for solving equation \eqref{eq:equation1} in the case where the coefficient matrices and vectors are of infinite sizes.  For the numerical treatment of equation \eqref{eq:equation1}, we consider a relaxed  version of  \eqref{fixed1}, that is, 
\begin{align}\label{para2}
	\begin{cases}
		x_{k+1} = (A-\mu_k I)^{-1}b ,\\
		\mu_{k+1} = \tau  \|x_{k+1}\|_1+ (1-\tau)\mu_k,
	\end{cases}
\end{align}
where $\tau\in (0,1]$ and  $\mu_0\in [0,\beta]$.  When $\tau=1$ and we choose $\mu_0=0$, it can be seen that the sequence $\{x_k\}$ consists in those defined in \eqref{fixed1}. 
To show the convergence of the sequence $\{x_k\}$ to the solution $x_*$ of equation \eqref{eq:equation1}, we have the following 
\begin{lemma}\label{mu_and_x}
If the sequence $\{\mu_k\}$ converges to a constant $\mu_*\in [0,\beta]$, then the sequence $\{x_k\}$ generated by \eqref{para2},  converges to the unique solution $x_*$ of equation \eqref{eq:equation1}, and   $x_*=(A-\mu_* I)^{-1}b$.
\end{lemma}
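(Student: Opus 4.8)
The plan is to exploit continuity of the map $\mu \mapsto (A-\mu I)^{-1}$ on the interval $[0,\beta]$, which holds because $A-\mu I=(2-\|a\|_{\w}-\mu)I-T_n(a)$ is an invertible $M$-matrix throughout this interval (this is exactly the observation already used to make $f(\mu)$ well defined). First I would note that, since $\mu_k \to \mu_*$ with $\mu_* \in [0,\beta]$, all but finitely many iterates lie in a compact subinterval of $[0,\beta]$ on which $(A-\mu I)^{-1}$ is uniformly bounded in norm and (Lipschitz-)continuous in $\mu$; concretely, $(A-\mu I)^{-1}-(A-\mu_* I)^{-1}=(A-\mu I)^{-1}(\mu-\mu_*)(A-\mu_* I)^{-1}$, so $\|(A-\mu_k I)^{-1}-(A-\mu_* I)^{-1}\|_1 \le |\mu_k-\mu_*|\,\|(A-\mu_k I)^{-1}\|_1\,\|(A-\mu_* I)^{-1}\|_1$, and both factors are bounded by $\tfrac{1}{1-\|a\|_{\w}}$ via \eqref{norm1}.

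Next I would pass to the limit in the first line of \eqref{para2}: $x_{k+1}=(A-\mu_k I)^{-1}b \to (A-\mu_* I)^{-1}b =: \tilde{x}$, by the above continuity estimate together with $x_{k+1}-\tilde{x}=\big((A-\mu_k I)^{-1}-(A-\mu_* I)^{-1}\big)b$. Thus $\{x_k\}$ converges, and its limit $\tilde{x}$ satisfies $\|\tilde{x}\|_1=\lim_k\|x_{k+1}\|_1$. Taking the limit in the second line of \eqref{para2}, $\mu_* = \tau\|\tilde{x}\|_1+(1-\tau)\mu_*$, which gives (since $\tau>0$) $\mu_*=\|\tilde{x}\|_1=\|(A-\mu_* I)^{-1}b\|_1$; that is, $\mu_*$ solves \eqref{eq:mumu}. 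By Lemma \ref{lem:mu0}, $\tilde{x}=(A-\mu_* I)^{-1}b$ is then a solution of \eqref{eq:equation1}.

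Finally I would identify $\tilde{x}$ with $x_*$. Since $\mu_*\in[0,\beta]$, i.e. $\mu_*\le 1-\|a\|_{\w}$, we get $\|\tilde{x}\|_1=\mu_*\le 1-\|a\|_{\w}$, hence $\|a\|_{\w}+\|\tilde{x}\|_1\le 1$; and in fact the strict inequality of Theorem \ref{thm:solution} applies because $\mu_*$ must coincide with the unique fixed point of $f$ in $[0,\beta]$, which by the proof of Theorem \ref{thm:solution} lies strictly below $\beta$ (it is the unique root of the strictly decreasing $g$, with $g(\beta)<0$). So $\|a\|_{\w}+\|\tilde{x}\|_1<1$, and by the uniqueness part of Theorem \ref{thm:solution}, $\tilde{x}=x_*$. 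The only mildly delicate point is the bookkeeping that guarantees the limit $\mu_*$ is genuinely a fixed point of $f$ rather than merely a limit of the recursion — handled by the $\tau>0$ cancellation above — and making sure the continuity estimate is invoked on an interval where all inverses exist, which is immediate from $\mu_k,\mu_*\in[0,\beta]$.
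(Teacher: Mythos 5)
Your proposal is correct and follows essentially the same approach as the paper: pass to the limit in both lines of \eqref{para2} to conclude that $\mu_*$ is a fixed point of $f$ and that $x_k \to (A-\mu_* I)^{-1}b$, then invoke Lemma \ref{lem:mu0} and Theorem \ref{thm:solution}. You simply make explicit the continuity of $\mu \mapsto (A-\mu I)^{-1}$ (via the resolvent identity and \eqref{norm1}) and the verification that $\|a\|_{\w}+\|\tilde{x}\|_1<1$, steps the paper leaves implicit.
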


\begin{proof}
If $\lim_{k\rightarrow\infty}\mu_k=\mu_*$, then we can see from \eqref{para2} that $\mu_*$ satisfies 
\[
\mu_*=\tau \| (A-\mu_* I)^{-1}b\|_1+(1-\tau)\mu_*,
\]
which yields
\[
\mu_*=\| (A-\mu_* I)^{-1}b\|_1.
\]
Then, in view of Lemma \ref{lem:mu0} and Theorem \ref{thm:solution},  we know that $(A-\mu_* I)^{-1}b$ is the unique solution of equation \eqref{eq:equation1}. On the other hand, we know from \eqref{para2} that $\lim_{k\rightarrow \infty}x_k=(A-\mu_* I)^{-1}b$, this means the sequence $\{x_k\}$ converges to the required solution of equation \eqref{eq:equation1}. 
\end{proof}

According to Lemma \ref{mu_and_x}, one can see that in order to prove the convergence of the sequence $\{x_k\}$, it is sufficient to prove the convergence of the sequence $\{\mu_k\}$. We have the following convergence result.   

\begin{lemma}\label{lem:mu}
		Suppose $0<\tau 
		\leq \min\{2, \frac{\beta}{\|A^{-1}b\|_1}, \frac{\beta}{\beta-\|(A-\beta I)^{-1}b\|_1}\}$, then the sequence $\{\mu_k\}$ generated by \eqref{para2} converges to a constant $\mu_*\in [0,\beta]$.  
\end{lemma}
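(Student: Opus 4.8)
The plan is to show that the iteration $\mu_{k+1}=h(\mu_k)$, where $h(\mu)=\tau\|(A-\mu I)^{-1}b\|_1+(1-\tau)\mu = \tau g(\mu)+\mu$, produces a monotone sequence that stays in $[0,\beta]$ and therefore converges; the limit is then automatically a fixed point of $h$, hence a solution of \eqref{eq:mumu}. First I would observe that under the stated bound on $\tau$ we have $\tau\le\min\{\frac{\beta}{\|A^{-1}b\|_1},\frac{\beta}{\beta-\|(A-\beta I)^{-1}b\|_1}\}$, so Lemma \ref{lem:closed} applies and $h$ maps $[0,\beta]$ into itself; since $\mu_0\in[0,\beta]$, the whole sequence $\{\mu_k\}$ is well defined and lies in $[0,\beta]$. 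It is worth noting that the extra constraint $\tau\le 2$ is what is needed to control the sign of $h'$ (or of the relevant difference quotient) rather than just the range.

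Next I would establish monotonicity by a case split on the position of $\mu_0$ relative to the unique fixed point $\mu_*$ from Theorem \ref{thm:solution}. If $\mu_0\in[0,\mu_*)$, then by Lemma \ref{lem:g} $g(\mu_0)>0$, so $\mu_1=\mu_0+\tau g(\mu_0)>\mu_0$; I would then argue inductively that the sequence stays in $[0,\mu_*]$ and is nondecreasing. The key point for the induction is that $h$ is monotonically increasing on $[0,\beta]$: since $h'(\mu)=\tau g'(\mu)+1=1-\tau(1-f'(\mu))$ — wait, more precisely $h'(\mu)=\tau f'(\mu)+(1-\tau)=1-\tau+\tau f'(\mu)$, and using $0\le f'(\mu)<1$ from \eqref{ine:fu} together with $0<\tau\le 2$ we get $h'(\mu)>1-\tau\ge -1$ and $h'(\mu)<1$; the lower bound $h'(\mu)>-1$ is not quite enough for monotonicity, so here I'd instead use that $f'(\mu)\ge 0$ gives $h'(\mu)\ge 1-\tau$, which combined with a sharper lower bound on $f'$ or a direct argument that $h(\mu_k)\le\mu_*$ whenever $\mu_k\le\mu_*$ (because $h(\mu_*)=\mu_*$ and $h$ is increasing where it matters). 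Concretely: from $\mu_k\le\mu_*$ and $h(\mu_*)=\mu_*$, monotonicity of $h$ yields $\mu_{k+1}=h(\mu_k)\le\mu_*$; and $g(\mu_k)\ge 0$ on $[0,\mu_*]$ gives $\mu_{k+1}=\mu_k+\tau g(\mu_k)\ge\mu_k$. Thus $\{\mu_k\}$ is nondecreasing and bounded above by $\mu_*$, hence convergent. The symmetric case $\mu_0\in(\mu_*,\beta]$ uses Lemma \ref{lem:g} to get $g(\mu_k)\le 0$, so the sequence is nonincreasing and bounded below by $\mu_*$; and the degenerate case $\mu_0=\mu_*$ gives a constant sequence. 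In all cases $\lim_k\mu_k$ exists in $[0,\beta]$.

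Finally, since $h$ is continuous on $[0,\beta]$ (the resolvent $(A-\mu I)^{-1}$ depends continuously on $\mu$ there, as $A-\mu I$ is an invertible $M$-matrix for every $\mu\in[0,\beta]$), passing to the limit in $\mu_{k+1}=h(\mu_k)$ shows that $\mu_\infty:=\lim_k\mu_k$ satisfies $\mu_\infty=h(\mu_\infty)$, i.e. $\tau g(\mu_\infty)=0$, hence $g(\mu_\infty)=0$; by Theorem \ref{thm:solution} the only such point in $[0,\beta]$ is $\mu_*$, so $\mu_\infty=\mu_*$. The main obstacle I anticipate is pinning down the monotonicity step cleanly — specifically making sure that the constraint $\tau\le 2$ (as opposed to the tighter constraints needed in Lemma \ref{lem:closed}) is exactly what guarantees $h$ is nondecreasing on the relevant subinterval, so that the induction $\mu_k\le\mu_*\Rightarrow\mu_{k+1}\le\mu_*$ goes through; once monotonicity and the invariance $h([0,\beta])\subseteq[0,\beta]$ are in hand, convergence is immediate from the monotone convergence theorem and the identification of the limit is routine.
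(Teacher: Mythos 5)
The invariance step (applying Lemma \ref{lem:closed} to get $h([0,\beta])\subseteq[0,\beta]$) matches the paper, but the monotone-convergence argument you build on top of it has a genuine gap, and you already put your finger on exactly where: you need $h$ to be nondecreasing on $[0,\mu_*]$ to conclude $\mu_k\le\mu_*\Rightarrow\mu_{k+1}=h(\mu_k)\le h(\mu_*)=\mu_*$, but the hypothesis $\tau\le 2$ does not give this. Indeed, $h'(\mu)=1-\tau\bigl(1-f'(\mu)\bigr)$, and $h'(\mu)\ge 0$ would require $\tau\le \frac{1}{1-f'(\mu)}$. Since $f'$ is nondecreasing, the binding point is $\mu=0$, i.e.\ $\tau\le\frac{1}{1-\|A^{-2}b\|_1}$. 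But the argument after \eqref{eq:xi} in the paper shows
\[
1-\|A^{-2}b\|_1\;\ge\;\frac{\|A^{-1}b\|_1+\beta-\|(A-\beta I)^{-1}b\|_1}{\beta}\;>\;\frac{\beta-\|(A-\beta I)^{-1}b\|_1}{\beta},
\]
so $\frac{1}{1-\|A^{-2}b\|_1}<\frac{\beta}{\beta-\|(A-\beta I)^{-1}b\|_1}$ strictly whenever $b\ne 0$. Hence $\tau$ can legitimately be chosen in the open interval $\bigl(\frac{1}{1-\|A^{-2}b\|_1},\,\frac{\beta}{\beta-\|(A-\beta I)^{-1}b\|_1}\bigr]$, and for such $\tau$ we have $h'(0)<0$: the iterate can overshoot $\mu_*$ and the sequence is not monotone. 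So the case split on the position of $\mu_0$ and the claim $\mu_{k+1}\le\mu_*$ do not survive.

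The paper avoids this entirely. It does not try to show monotonicity; instead it establishes $-1<h'(\mu)<1$ for all $\mu\in[0,\beta]$ (the upper bound from $f'(\mu)=\|(A-\mu I)^{-2}b\|_1<1$ as in \eqref{ine:fu}, the lower bound from $\tau\le 2$ and $f'\ge 0$, which give $h'\ge 1-\tau\ge -1$, and in fact $h'>-1$ since $\tau(\|A^{-2}b\|_1-1)>-2$), so $h$ is a contraction on the compact invariant interval $[0,\beta]$. The Banach fixed-point theorem then yields convergence from \emph{any} $\mu_0\in[0,\beta]$, possibly with an oscillating (alternating-sign error) sequence, which is precisely the behaviour your monotone argument cannot accommodate. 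The constant $2$ in the hypothesis is what buys $h'>-1$, not nonnegativity of $h'$; if you want a monotone iteration you would need the stronger restriction $\tau\le\frac{1}{1-\|A^{-2}b\|_1}$. So: keep your use of Lemma \ref{lem:closed} for invariance, replace the monotonicity step by the two-sided derivative bound, and invoke the contraction mapping principle.
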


\begin{proof}
Consider the function $h_{\tau}(\mu)= \tau \|(A-\mu I)^{-1}b\|_1 + (1-\tau)\mu$, $\mu\in [0,\beta]$. According to Lemma \ref{lem:closed}, we know that $ h_{\tau}$  maps the closed interval $[0,\beta]$ into itself.
Observe that $h_{\tau}(\mu)=\tau f(\mu)+(1-\tau)\mu$, where $f(\mu)$ is defined in \eqref{eq:f}. In view of \eqref{eq:deri}, we obtain
$$
h'_{\tau}(\mu)=\tau(\|(A-\mu I)^{-2}b\|_1-1)+1.
$$
Recall from \eqref{ine:fu} that
$\|(A-\mu I)^{-2}b\|_1<1$,
 it follows 
 \begin{equation}\label{in:ge1}
h'_{\tau}(\mu)<1.
\end{equation}
On the other hand, 
it follows from  $(A-\mu I)^{-1}\geq A^{-1}\geq 0$ that
	\begin{align}\label{eq:tau}
		h'_{\tau}(\mu) 
				&\ge \tau (\|A^{-2}b\|_1-1) + 1. 
	\end{align}
Since  $0<\|A^{-2}b\|_1<\frac{\|b\|_1}{(1-\|a\|_{\w})^2}<1$ and  $\frac{2}{1-\|A^{-2}b\|_1}>2$ , it follows from $\tau\leq 2$ that $\tau (\|A^{-2}b\|_1-1)> -2$, and according to \eqref{eq:tau}, we obtain $ h'_{\tau}(\mu)>-1$.  Together with \eqref{in:ge1}, we obtain $|h'_{\tau}(\mu)|<1$ for $\mu\in [0,\beta]$. This implies that $ h_{\tau}$ is a contraction   on $[0,\beta]$. 	Hence, according to  the  Banach fixed-point theorem, there is $\mu_*\in [0,\beta]$ such that $\mu_*= h_{\tau}(\mu_*)$ and  any sequence $\{\mu_k\}$ generated by \eqref{para2} converges to $\mu_*$.
\end{proof}

It is natural to ask whether one can choose an optimal parameter 
$\tau$ to accelerate the convergence of iteration \eqref{para2}.
	We provide some insights on how to choose a proper parameter $\tau$. For any initial value $\mu_0\in [0,\beta]$, we have  $$\mu_*= h_{\tau}(\mu_0)+ h'_{\tau}(\mu_0)(\mu_*-\mu_0)+O(|\mu_*-\mu_0|^2),$$ 
    from which we obtain
    $$|\mu_*-\mu_1|=| h'_{\tau}(\mu_0)||\mu_*-\mu_0|+O(|\mu_*-\mu_0|^2).$$ 
 We may choose $\tau$ such that  $h'_{\tau}(\mu_0) = \tau (\|(A-\mu_0  I)^{-2}b\|_1-1) + 1=0$. This way, we obtain 
\begin{equation}\label{eq:tao0}
\tau(\mu_0) =\frac{1}{1-\|(A-\mu_0  I)^{-2}b\|_1}. 
\end{equation}
Recall that  to prove the convergence of iteration \eqref{para2},  Lemma \ref{lem:mu} requires that  
$$\tau\leq \min\{2, \frac{\beta}{\|A^{-1}b\|_1}, \frac{\beta}{\beta-\|(A-\beta I)^{-1}b\|_1}\}.$$
Therefore, for different choice of $\mu_0$, we  need to check if  $\tau(\mu_0)$ defined in \eqref{eq:tao0} satisfies this condition. If $\mu_0=0$, we have $\tau(0)=\frac{1}{1-\|A^{-2}b\|_1}$. We prove that this choice makes sense. Indeed, since $\|b\|_1 < (1-\|a\|_{\w})^2$ and $\|A^{-2}b\|_1 \leq  \frac{\|b\|_1}{(2-2\|a\|_{\w})^2} < \frac{1}{2}$,  where the first inequality for $\|A^{-2}b\|_1$ follows from $\|A^{-2}b\|_1\leq \|(A-\mu I)^{-2}b\|_1$ and \eqref{ine:fu}, we  know that
\begin{equation}\label{taole2}
  \tau(0)=  \frac{1}{1-\|A^{-2}b\|_1} < 2.
\end{equation}

On the other hand, recall that  $g'(\mu) = \|(A-\mu I)^{-2}b\|_1 - 1<0$ for $\mu\in [0,\beta]$, thus there exist $\xi \in (0,\beta)$ such that $\frac{g(0)-g(\beta)}{\beta} = -g'(\xi)$, that is, 
\begin{align}\label{eq:xi}
    \frac{\|A^{-1}b\|_1 + \beta-\|(A-\beta I)^{-1}b\|_1}{\beta} = -g'(\xi).  
\end{align}

Observe that  $(A-\xi I)^{-1}\geq A^{-1}\geq 0$ so that $(A-\xi I)^{-2}b\geq A^{-2}b\geq 0$,  it follows that $\|(A-\xi I)^{-2}b\|_1\geq \|A^{-2}b\|_1$, from which we obtain $g'(\xi)=\|(A-\xi I)^{-2}b\|_1-1\geq g'(0)=\|A^{-2}b\|_1-1$. Then, together with \eqref{eq:xi}, we have 
  $$1-\|A^{-2}b\|_1\geq \frac{\|A^{-1}b\|_1 + \beta-\|(A-\beta I)^{-1}b\|_1}{\beta}, $$ 
  which, together with \eqref{taole2}, implies 
  $$\tau(0)=\frac{1}{1-\|A^{-2}b\|_1}\le \min\{2, \frac{\beta}{\|A^{-1}b\|_1}, \frac{\beta}{\beta-\|(A-\beta I)^{-1}b\|_1}\}.$$

\subsection{Newton's iteration}

In this section, we show that the Newton's iteration can be applied to compute the solution of equation \eqref{eq:equation1}. In particular, we show that  the Newton's iteration can be used to find a solution of equation $g(\mu)=f(\mu)-\mu=0$, where $f(\mu )=\|(A-\mu I)^{-1}b\|_1$.

 Observe that when applying  Newton's iteration to equation $g(\mu) = 0$, we obtain  the sequence $\{\mu_k\}$ generated by
\begin{equation}\label{2.2}
	\mu_{k+1} = \mu_k - \frac{g(\mu_k)}{g'(\mu_k)} ,\quad \mu_0=0.
\end{equation}
Recall from \eqref{gu} that $g'(\mu)=\|(A-\mu I)^{-2}b\|_1-1<0$  for any $\mu \in [0,\beta]$,  
hence, to show  that the sequence $\{\mu_k\}$ is well-defined, it suffices to show $\mu_k\in [0, \beta ]$ for $k=0,1,2,\ldots.$ Since $\mu_*<\beta $, it suffices to show $0 \le \mu_k \le \mu_{k+1} \le \mu_*$. To this end, we need the following analysis. 

If $\mu_k\leq \mu_*$,  it follows from $0\le (A-\mu_k I)^{-1}\le (A-\mu_* I)^{-1}$ that
	\begin{align}
	&\|(A-\mu_k I)^{-1}b\|_1 - \|(A-\mu_* I)^{-1}b\|_1 \notag \\
		&= \one^{T} (A-\mu_k I)^{-1}[(A-\mu_* I) - (A-\mu_k I)](A-\mu_* I)^{-1}b  \nonumber \\
        &= {\|(A-\mu_k I)^{-1}(A-\mu_* I)^{-1}b\|_1(\mu_k - \mu_*)}, \notag 
\end{align}
together with the fact that $\mu_*=\|(A-\mu_* I)^{-1}b\|_1$, we obtain 
	\begin{align}\label{eq:muk}
	g(\mu_k)&=
		\|(A-\mu_k I)^{-1}b\|_1 -  \mu_k\notag  \\
		 &= \|(A-\mu_k I)^{-1}b\|_1 - \|(A-\mu_* I)^{-1}b\|_1 - \mu_k +\mu_*  \nonumber \\
        &={\big (\|(A-\mu_k I)^{-1}(A-\mu_* I)^{-1}b\|_1-1\big)(\mu_k - \mu_*). }
	\end{align}

Now we are ready to show the convergence of the sequence $\{\mu_k\}$ generated by Newton's iteration. We have
\begin{theorem} \label{Th2}
Suppose $x_*$   is the unique solution of equation \eqref{eq:equation1}  with $\|x_*\|_1<1-\|a\|_{\w}$, then  
 	the sequence $\{\mu_k\}$ generated by \eqref{2.2}  converges to $\mu_*$, where $\mu_*=\|x_*\|_1$, and it satisfies $0 \le \mu_k \le \mu_{k+1} \le \mu_*$. 
\end{theorem}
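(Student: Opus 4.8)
The plan is to establish the chain of inequalities $0 \le \mu_k \le \mu_{k+1} \le \mu_*$ by induction on $k$, from which well-definedness of the sequence follows immediately (since each $\mu_k \in [0,\mu_*] \subset [0,\beta]$, where $g'$ is strictly negative by \eqref{gu}, the Newton step $\mu_{k+1} = \mu_k - g(\mu_k)/g'(\mu_k)$ makes sense), and then deduce convergence from monotonicity and boundedness. The base case is $\mu_0 = 0$: one has $g(0) = \|A^{-1}b\|_1 > 0$ and $g'(0) < 0$, so $\mu_1 = -g(0)/g'(0) > 0 = \mu_0$, and it remains to see $\mu_1 \le \mu_*$. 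For this I would use the key identity \eqref{eq:muk}, valid whenever $\mu_k \le \mu_*$: writing $g(\mu_k) = \big(\|(A-\mu_k I)^{-1}(A-\mu_* I)^{-1}b\|_1 - 1\big)(\mu_k - \mu_*)$ and recalling $g'(\mu_k) = \|(A-\mu_k I)^{-2}b\|_1 - 1$, the Newton correction becomes
\[
\mu_{k+1} - \mu_* = (\mu_k - \mu_*) - \frac{g(\mu_k)}{g'(\mu_k)} = (\mu_k - \mu_*)\left(1 - \frac{\|(A-\mu_k I)^{-1}(A-\mu_* I)^{-1}b\|_1 - 1}{\|(A-\mu_k I)^{-2}b\|_1 - 1}\right).
\]

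The crux is therefore to control that bracketed factor and show it lies in $[0,1)$ when $\mu_k \le \mu_*$, which would simultaneously give $\mu_{k+1} \le \mu_*$ and (after separately checking $\mu_{k+1} \ge \mu_k$) close the induction. Since $0 \le A - \mu_* I \le A - \mu_k I$ implies $0 \le (A-\mu_* I)^{-1}$ dominates $(A-\mu_k I)^{-1}$ entrywise, we get $0 \le (A-\mu_k I)^{-2}b \le (A-\mu_k I)^{-1}(A-\mu_* I)^{-1}b$ entrywise, hence $\|(A-\mu_k I)^{-2}b\|_1 \le \|(A-\mu_k I)^{-1}(A-\mu_* I)^{-1}b\|_1$; both quantities are strictly less than $1$ by the bound \eqref{ine:fu} (applied to $(A-\mu_k I)^{-2}$ and, analogously, to the mixed product, using $\|(A-\mu I)^{-1}\|_1 \le 1/(2-2\|a\|_{\w}-\mu) \le 1/(1-\|a\|_{\w})$ and $\|b\|_1 < (1-\|a\|_{\w})^2$). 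Writing $p = 1 - \|(A-\mu_k I)^{-2}b\|_1 \in (0,1]$ and $q = 1 - \|(A-\mu_k I)^{-1}(A-\mu_* I)^{-1}b\|_1$ with $0 < q \le p$ and $q \le 1$, the bracket equals $1 - q/p \in [0,1)$; combined with $\mu_k - \mu_* \le 0$ this yields $\mu_* - \mu_{k+1} = (\mu_* - \mu_k)(1 - q/p) \ge 0$ and $\le \mu_* - \mu_k$, i.e. $\mu_k \le \mu_{k+1} \le \mu_*$. The inequality $\mu_{k+1} \ge \mu_k$ also follows directly from $g(\mu_k) \ge 0$ (immediate from \eqref{eq:muk} since $\mu_k \le \mu_*$ makes both factors nonpositive, wait — $\mu_k - \mu_* \le 0$ and $\|(A-\mu_k I)^{-1}(A-\mu_* I)^{-1}b\|_1 - 1 < 0$, so the product is $\ge 0$) together with $g'(\mu_k) < 0$.

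The main obstacle I anticipate is not any single estimate but making sure the two $\ell^1$-norm quantities are genuinely ordered and both strictly below $1$ uniformly — the entrywise monotonicity of $M$-matrix inverses under the ordering $A - \mu_* I \le A - \mu_k I$ is the workhorse and must be invoked carefully (the relevant fact is $(A-\mu_k I)^{-1} \ge (A-\mu_* I)^{-1} \ge 0$, which holds because both are inverse $M$-matrices and $A - \mu_k I \le A - \mu_* I$ entrywise with equal, nonpositive off-diagonals). Once monotonicity and the uniform bound are in place, the induction runs cleanly. Finally, since $\{\mu_k\}$ is nondecreasing and bounded above by $\mu_*$, it converges to some limit $\bar\mu \le \mu_*$; passing to the limit in the Newton recursion and using continuity of $g$ and $g'$ on $[0,\beta]$ (with $g'$ nonvanishing there) forces $g(\bar\mu) = 0$, and by the uniqueness of the root of $g$ in $[0,\beta]$ established in Theorem \ref{thm:solution} (via Lemma \ref{lem:g}), $\bar\mu = \mu_*$. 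This completes the argument.
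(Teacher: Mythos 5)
Your proof is correct and follows essentially the same route as the paper's: both hinge on the identity \eqref{eq:muk} rewritten as $\mu_* - \mu_{k+1} = c_k(\mu_* - \mu_k)$ with the same contraction factor (your $1 - q/p$ equals the paper's $c_{k+1}$), both establish $c_k \in [0,1)$ via the entrywise ordering $0 \le (A-\mu_k I)^{-2} \le (A-\mu_k I)^{-1}(A-\mu_* I)^{-1}$ and the bound \eqref{ine:fu}, and both conclude by monotone convergence plus uniqueness of the root of $g$. The only cosmetic difference is that you derive $\mu_{k+1}\ge\mu_k$ as a by-product of the factor being $<1$, whereas the paper argues it separately from the signs of $g$ and $g'$.
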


\begin{proof}
		For $k=0$, we have $\mu_1=-\frac{g(\mu_0)}{g'(\mu_0)}$, where $g(\mu_0)=\|A^{-1}b\|_1\geq 0$ and $g'(\mu_0)=f'(0)-1<0$ in view of \eqref{gu}, so that $\mu_1\geq \mu_0=0$.
 It follows from \eqref{eq:muk} that {$g(\mu_0)=\big (1-\|A^{-1}(A-\mu_* I)^{-1}b\|_1\big)\mu_*$}, from which we obtain  
	\begin{equation}\label{eq:uu}
	\begin{aligned}
    {\mu_*-\mu_1
	=\left(\frac{\|A^{-1}(A-\mu_* I)^{-1}b\|_1-\|A^{-2}b\|_1}{1-\|A^{-2}b\|_1}\right)\mu_*}.
	\end{aligned}
	\end{equation}

Since $0\leq A^{-1}\leq (A-\mu I)^{-1}$ for $\mu\in [0,\beta]$, it follows that  $0\leq A^{-2}\leq A^{-1}(A-\mu_* I)^{-1}$, so that $\|A^{-1}(A-\mu_* I)^{-1}b\|_1-\|A^{-2}b\|_1\geq 0$.   Together with \eqref{eq:uu}, we obtain $\mu_*-\mu_1\geq 0$, that is, $\mu_*\geq \mu_1$.

Now assume $0\leq \mu_k\leq \mu_{k+1}\leq \mu_*$.  Then  according to Lemma \ref{lem:g} and inequality \eqref{gu} we have  $g(\mu_{k+1})\ge 0$ and $g'(\mu_{k+1})<0$, from which we obtain 
\[
\mu_{k+2}-\mu_{k+1}=-\frac{g(\mu_{k+1})}{g'(\mu_{k+1})}\ge 0,
\]
  that is,  $\mu_{k+2}\geq \mu_{k+1}.$ It remains to show $\mu_{k+2}\leq \mu_*$. According to \eqref{eq:muk}, we have
\[
\begin{aligned}
\mu_*-\mu_{k+2}
&=c_{k+1}(\mu_*-\mu_{k+1}),
\end{aligned}
\]
where {$c_{k+1}=\left(\frac{\|(A-\mu_{k+1} I)^{-1}(A-\mu_* I)^{-1} b\|_1-\|(A-\mu_{k+1}I)^{-2}b\|_1}{1-\|(A-\mu_{k+1}I)^{-2}b\|_1}\right )$}.

Observe that $0\leq (A-\mu_{k+1} I)^{-1}\le (A-\mu_* I)^{-1}$, so that 
$$\|(A-\mu_{k+1} I)^{-1}(A-\mu_* I)^{-1} b\|_1\geq \|(A-\mu_{k+1} I)^{-2} b\|_1,$$ which implies that $c_{k+1}>0$. Hence $\mu_*-\mu_{k+2}\geq 0$, that is, $\mu_*\geq \mu_{k+2}$. Since the sequence $\{\mu_k\}$ is monotonically increasing and bounded above, there is $\mu\in [0,\mu_*]$ such that $\lim_{k\rightarrow \infty}\mu_k=\mu$ and $\mu=f(\mu)$. According to  Lemma \ref{lem:mu0} and Theorem \ref{thm:solution}, we know such $\mu$ is unique and $\mu=\mu_*$, that is, the sequence $\{\mu_k\}$ converges to $\mu_*$. The proof is complete. 
\end{proof}


\subsection{Doubling algorithm}
In this section, we show that a structure-preserving doubling algorithm can be applied to compute the solution $x_*$ of equation \eqref{eq:equation1}. To this end, we first recall the structure-preserving doubling algorithm (SDA) in \cite{dario_book}. 

The doubling algorithm can be used in computing $X{\in \mathbb R^{n\times n}}$ that satisfies the following system 
\begin{align}\label{DA}
	N \left(\begin{array}{cc}I\\X\end{array}\right) = K\left(\begin{array}{cc}I\\X\end{array}\right)W,
\end{align}
 where $N, K\in \mathbb R^{(m + n) \times (m + n)}$ and $W\in \mathbb R^{n\times n}$. If there is  $U$ and $L$ such that $LK = UN$, then it follows from \cite[page 147]{dario_book} that 
  \begin{align}\label{N1K1}
	LN \left(\begin{array}{cc}I\\X\end{array}\right) = UK\left(\begin{array}{cc}I\\X\end{array}\right)W^2. 
	\end{align}

 Set $N_1 = LN$, $K_1 = UK$, if for each $k$, there is $L_k$ and $U_k$ such that $L_kK_k=U_kN_k$, then by setting $N_{k+1} = L_kN_k$ and  $K_{k+1} = U_kK_k$, 
 a similar analysis as \eqref{N1K1} yields 
 \begin{align}\label{W^2^k}
	N_k \left(\begin{array}{cc}I\\X\end{array}\right) =  K_k\left(\begin{array}{cc}I\\X\end{array}\right)W^{2^k}.  
\end{align}
If the matrices $N$ and $K$ in \eqref{DA} are of the  form 
\begin{align}
	N =  \left(\begin{array}{cc}E &0 \\-P &I\end{array}\right),   K = \left(\begin{array}{cc}I &-Q\\0 &F\end{array}\right), 
\end{align} 
where $E, P, F, Q  \in \mathbb{R}^{n \times n}$, then  $N$ and $K$ are said to be in the standard structured form-I (SSF-I)  \cite[page 148]{dario_book}. Set $N_0 = N$ and $K_0 = K$. {If} for each $k\geq 0$ there are
\begin{align}\label{LU}
	L_k =  \left(\begin{array}{cc}L_{11}^{(k)} &0 \\L_{21}^{(k)} &I\end{array}\right),\quad    U_k = \left(\begin{array}{cc}I &U_{12}^{(k)}\\0 &U_{22}^{(k)}\end{array}\right), 
\end{align} 
  such that $L_kK_k = U_kN_k$, {then} for $k\geq 0$, the matrices  $K_{k+1}:=U_kK_k$ and $N_{k+1}:=L_kN_k$ preserve the standard structured form-I \cite{dario_book}, that is, 
 \begin{align}\label{NK}
 	N_{k} =  \left(\begin{array}{cc}E_{k} &0 \\-P_{k} &I\end{array}\right), \quad   K_{k} = \left(\begin{array}{cc}I &-Q_{k}\\0 &F_{k}\end{array}\right). 
 \end{align} 
 
 The following theorem provides sufficient conditions under which the matrices $L_k$ and $U_k$ in \eqref{LU} exist, so that the matrices $N_k$ and $K_k$ in \eqref{NK} are well defined. 
 \begin{lemma}\cite[Theorem 5.2]{dario_book}\label{lem:dabook}
	Let $k \ge 0$ and assume that the matrices $K_k$, $N_k$ are in the form
	\eqref{NK}. If the matrix $I - Q_k P_k$ is nonsingular, then also $I- P_k Q_k$ is nonsingular and	there exist matrices $L_k$, $U_k$ having the structure \eqref{LU} which satisfy $L_kK_k = U_kN_k$. Therefore, the matrices $K_{k+1} = U_kK_k$, $N_{k+1} = L_kN_k$ are well defined and have the
	structure \eqref{NK}, with
	\begin{equation}\label{EPFQ}
		\begin{aligned}
			E_{k+1}&=E_k(I-Q_kP_k)^{-1}E_k,\\
			 P_{k+1}&=P_k+F_k(I-P_kQ_k)^{-1}P_kE_k,\\
			F_{k+1}&=F_{k}(I-P_{k}Q_{k})^{-1}F_{k},\\
			Q_{k+1}&=Q_{k}+E_{k}(I-Q_{k}P_{k})^{-1}Q_{k}F_{k},
		\end{aligned}
	\end{equation}
where $E_0=E$, $P_0=P$, $Q_0=Q$ and $F_0=F$. 
 \end{lemma}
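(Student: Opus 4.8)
Although this result is quoted from \cite{dario_book}, for completeness I sketch how one would prove it. The plan is to reduce the existence of $L_k,U_k$ with the prescribed block structure to a small system of matrix equations obtained by equating the four blocks of $L_kK_k$ and $U_kN_k$, to solve that system explicitly using the stated nonsingularity hypothesis, and finally to read off \eqref{EPFQ} by carrying out the two block products $L_kN_k$ and $U_kK_k$.

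First I would use the forms \eqref{NK} and \eqref{LU} to write
\[
L_k K_k = \left(\begin{array}{cc} L_{11}^{(k)} & -L_{11}^{(k)}Q_k \\ L_{21}^{(k)} & F_k-L_{21}^{(k)}Q_k \end{array}\right), \qquad
U_k N_k = \left(\begin{array}{cc} E_k-U_{12}^{(k)}P_k & U_{12}^{(k)} \\ -U_{22}^{(k)}P_k & U_{22}^{(k)} \end{array}\right).
\]
Equating the $(1,2)$ and $(2,1)$ blocks gives $U_{12}^{(k)}=-L_{11}^{(k)}Q_k$ and $L_{21}^{(k)}=-U_{22}^{(k)}P_k$; substituting these into the $(1,1)$ and $(2,2)$ blocks yields $L_{11}^{(k)}(I-Q_kP_k)=E_k$ and $U_{22}^{(k)}(I-P_kQ_k)=F_k$. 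Since $I-Q_kP_k$ is nonsingular by hypothesis and $\det(I-P_kQ_k)=\det(I-Q_kP_k)$ (Sylvester's determinant identity, equivalently $P_kQ_k$ and $Q_kP_k$ share the same nonzero spectrum), the matrix $I-P_kQ_k$ is nonsingular too. Hence I may set
\[
L_{11}^{(k)}=E_k(I-Q_kP_k)^{-1},\quad U_{22}^{(k)}=F_k(I-P_kQ_k)^{-1},\quad U_{12}^{(k)}=-L_{11}^{(k)}Q_k,\quad L_{21}^{(k)}=-U_{22}^{(k)}P_k,
\]
and a back-substitution (for instance $E_k-U_{12}^{(k)}P_k=E_k+L_{11}^{(k)}Q_kP_k=L_{11}^{(k)}$) confirms that all four block identities hold, so $L_kK_k=U_kN_k$ with $L_k,U_k$ of the form \eqref{LU}.

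It then remains to identify $N_{k+1}=L_kN_k$ and $K_{k+1}=U_kK_k$. A direct block multiplication gives
\[
N_{k+1}=\left(\begin{array}{cc} L_{11}^{(k)}E_k & 0 \\ L_{21}^{(k)}E_k-P_k & I \end{array}\right),\qquad
K_{k+1}=\left(\begin{array}{cc} I & -Q_k+U_{12}^{(k)}F_k \\ 0 & U_{22}^{(k)}F_k \end{array}\right),
\]
so $N_{k+1},K_{k+1}$ are again in the form \eqref{NK} with $E_{k+1}=L_{11}^{(k)}E_k$, $P_{k+1}=P_k-L_{21}^{(k)}E_k$, $F_{k+1}=U_{22}^{(k)}F_k$ and $Q_{k+1}=Q_k-U_{12}^{(k)}F_k$; inserting the closed forms of $L_{11}^{(k)},L_{21}^{(k)},U_{12}^{(k)},U_{22}^{(k)}$ found above produces exactly the recursions \eqref{EPFQ}.

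The computation is essentially bookkeeping with $2\times2$ block matrices, and I do not expect a genuine obstacle. The one step that is not pure calculation is the passage from nonsingularity of $I-Q_kP_k$ to that of $I-P_kQ_k$, which I would justify by the determinant identity $\det(I-XY)=\det(I-YX)$ for square $X,Y$; a minor additional care is needed to verify that the explicit choice of $L_k,U_k$ satisfies all four block equations and not merely the two used to derive it, which the back-substitution above handles.
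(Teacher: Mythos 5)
Your proof is correct and is essentially the standard argument for this lemma; the paper itself does not prove it but simply cites Theorem~5.2 of \cite{dario_book}, and what you wrote is a faithful reconstruction of the derivation there. The block computations of $L_kK_k$ and $U_kN_k$ are right, the reduction to $L_{11}^{(k)}(I-Q_kP_k)=E_k$ and $U_{22}^{(k)}(I-P_kQ_k)=F_k$ is right, and the invocation of $\det(I-P_kQ_k)=\det(I-Q_kP_k)$ to pass nonsingularity across is exactly what is needed; the final block products for $L_kN_k$ and $U_kK_k$ then yield \eqref{EPFQ} as you state.
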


Now we are ready to show how a structure-preserving doubling algorithm can be applied to solve equation \eqref{eq:equation1}. 
 Suppose ${x_*}$ is a solution of equation \eqref{eq:equation1} such that ${x_*}\geq 0$ and $\|{x_*}\|_1+\|a\|_{\w}<1$.  Then
equation \eqref{eq:equation1} can be equivalently written as  
\begin{align}
	\left(\begin{array}{cc}0&\textbf{1}^{T}\\ -b&A\end{array}\right)\left(\begin{array}{cc}1\\ {x_*}\end{array}\right) = \left(\begin{array}{cc}1&0\\ 0&I\end{array}\right)\left(\begin{array}{cc}1\\ {x_*}\end{array}\right)\mu_*. \nonumber
\end{align}
Multiplying on the left by $\begin{pmatrix} 1& -\one^T A^{-1}\\ O& A^{-1} \end{pmatrix}$, we obtain 
 \begin{align}\label{SSF-I}
 	\left(\begin{array}{cc}\textbf{1}^{T}A^{-1}b&0\\ -A^{-1}b&I\end{array}\right)\left(\begin{array}{cc}1\\ {x_*}\end{array}\right) = \left(\begin{array}{cc}1&-\textbf{1}^{T}A^{-1}\\ 0&A^{-1}\end{array}\right)\left(\begin{array}{cc}1\\ {x_*}\end{array}\right)\mu_*. 
 \end{align}
 
It is clear that the matrices \[
N=\left(\begin{array}{cc}\textbf{1}^{T}A^{-1}b&0\\ -A^{-1}b&I\end{array}\right)\quad  {\rm and}\ \quad  K=\left(\begin{array}{cc}1&-\textbf{1}^{T}A^{-1}\\ 0&A^{-1}\end{array}\right)
\]
are in the form of SSF-I. 
For notational convenience, we replace $(E_k, P_k, Q_k, F_k)$ by $(c_k, u_k, v_k^T, F_k)$, and rewrite \eqref{EPFQ} as
\begin{equation}\label{EPFQ2}
		\begin{aligned}
			c_{k+1}&=c_k(1-v^T_ku_k)^{-1}c_k,\\
			u_{k+1}&=u_k+c_kF_k(I-u_kv^T_k)^{-1}u_k,\\
			v_{k+1}&=v_{k}+c_{k}(1-v^T_{k}u_{k})^{-1}F_{k}^Tv_{k},\\
			F_{k+1}&=F_{k}(I_{n}-u_{k}v^T_{k})^{-1}F_{k},
		\end{aligned}
	\end{equation}
where $c_0 = \textbf{1}^{T}A^{-1}b$, $u_0 = A^{-1}b$, $v_0 = A^{-T}\one $ and $F_0 = A^{-1}$. It can be seen that  for $k\ge 0$, $c_k \in \mathbb{R}$, $u_k \in \mathbb{R}^{n \times 1}$, $v_k \in \mathbb{R}^{n \times 1}$ and  $F_k \in \mathbb{R}^{n \times n}$.

   Concerning the computation of  $(I-u_kv^T_k)^{-1}$, which appears in the iterative process \eqref{EPFQ2}, since $(I-u_kv^T_k)^{-1}=I+ u_k(1-v_k^Tu_k)^{-1}v_k^T$, in the numerical treatment, it suffices to compute the scalar $(1-v_k^Tu_k)^{-1}$.

According to Lemma \ref{lem:dabook},  
we see that  if $1-v_k^Tu_k\ne 0$, then the sequences $\{c_{k+1}\}, \{u_{k+1}\}, \{v_{k+1}\}$ and $\{F_{k+1}\}$ are well-defined and  satisfy the iterative process  \eqref{EPFQ2}. In what follows,  we show that   $1-v_k^Tu_k\ne 0$ is always true for $k=0,1,2,\ldots$. To this end, we first investigate some properties of the sequences $\{c_k\}$, $\{u_k\}$, $\{v_k\}$ and $\{F_k\}$. 

\begin{lemma}\label{lem:abeta}
Set $\alpha_k = \|u_kv^T_k\|_1$, $\beta_k = \|c_kF_k\|_1$. 
If $\|u_kv_k^T\|_1<1$ for $k\in \mathbb Z$, then it holds 
\begin{equation}\label{eq:abeta1}
 \beta_{k+1} \leq \Big (\frac{\beta_k}{1-\alpha_k}\Big )^2, 
 \quad \alpha_{k+1} \le  \alpha_k\Big (1+\frac{\beta_k}{1-\alpha_k}\Big )^2.
\end{equation}
\end{lemma}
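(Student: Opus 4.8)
The plan is to prove the two inequalities in \eqref{eq:abeta1} directly from the recursion \eqref{EPFQ2}, using submultiplicativity of the $1$-norm together with the standard Neumann-series bound $\|(I-u_kv_k^T)^{-1}\|_1 \le \frac{1}{1-\alpha_k}$, which is available precisely because we are assuming $\|u_kv_k^T\|_1 = \alpha_k < 1$ (and note $\|(1-v_k^Tu_k)^{-1}\| = \frac{1}{|1-v_k^Tu_k|} \le \frac{1}{1-\alpha_k}$ as well, since $|v_k^Tu_k| \le \|u_kv_k^T\|_1$).

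First I would handle $\beta_{k+1}$. From the recursion $F_{k+1} = F_k(I-u_kv_k^T)^{-1}F_k$ and $c_{k+1} = c_k(1-v_k^Tu_k)^{-1}c_k$, and since $c_k$ is a scalar, I can write $c_{k+1}F_{k+1} = (c_kF_k)(1-v_k^Tu_k)^{-1}(I-u_kv_k^T)^{-1}(c_kF_k)$; taking $1$-norms and applying submultiplicativity twice gives $\beta_{k+1} \le \beta_k^2 \cdot \frac{1}{|1-v_k^Tu_k|}\cdot\frac{1}{1-\alpha_k} \le \bigl(\frac{\beta_k}{1-\alpha_k}\bigr)^2$. (If one prefers to avoid worrying about the sign of $1-v_k^Tu_k$, one can instead bound $c_{k+1} = \frac{c_k^2}{1-v_k^Tu_k}$ by $\frac{|c_k|^2}{1-\alpha_k}$ directly, then combine with $\|F_{k+1}\|_1 \le \frac{\|F_k\|_1^2}{1-\alpha_k}$.) Next, for $\alpha_{k+1}$, from $u_{k+1} = u_k + c_kF_k(I-u_kv_k^T)^{-1}u_k$ and $v_{k+1} = v_k + c_k(1-v_k^Tu_k)^{-1}F_k^Tv_k$, I would bound $\|u_{k+1}\|_1 \le \|u_k\|_1\bigl(1 + \frac{\beta_k}{1-\alpha_k}\bigr)$ and similarly $\|v_{k+1}\|_1 \le \|v_k\|_1\bigl(1 + \frac{\beta_k}{1-\alpha_k}\bigr)$, using $\|c_kF_k(I-u_kv_k^T)^{-1}u_k\|_1 \le \beta_k \cdot \frac{1}{1-\alpha_k}\cdot\|u_k\|_1$. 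Then $\alpha_{k+1} = \|u_{k+1}v_{k+1}^T\|_1 \le \|u_{k+1}\|_1\|v_{k+1}\|_1 \le \|u_k\|_1\|v_k\|_1\bigl(1+\frac{\beta_k}{1-\alpha_k}\bigr)^2$, and since $\|u_k\|_1\|v_k\|_1 = \|u_kv_k^T\|_1 = \alpha_k$ (the rank-one $1$-norm identity $\|uv^T\|_1 = \|u\|_1\|v\|_1$), this is exactly the claimed bound.

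The only genuinely delicate point is the bookkeeping around $1-v_k^Tu_k$ versus $I - u_kv_k^T$: the hypothesis is phrased in terms of $\|u_kv_k^T\|_1 < 1$, and one needs $|1-v_k^Tu_k|\ge 1-\alpha_k > 0$, which follows from $|v_k^Tu_k| = |\mathrm{tr}(u_kv_k^T)| \le \|u_kv_k^T\|_1$ (or more elementarily $|v_k^Tu_k| \le \|u_k\|_1\|v_k\|_\infty \le \|u_k\|_1\|v_k\|_1$). I also need the identity $\|uv^T\|_1 = \|u\|_1\|v\|_1$ for real vectors, which is immediate since the $j$-th column of $uv^T$ is $v_j u$, so $\|uv^T\|_1 = \max_j |v_j|\,\|u\|_1$ — wait, that gives $\|v\|_\infty\|u\|_1$, not $\|v\|_1\|u\|_1$, so I should instead use $\|uv^T\|_1 \le \|u\|_1\|v\|_1$ as an inequality (which suffices for the direction needed) and be careful not to claim equality; everything in \eqref{eq:abeta1} only requires the upper bounds, so this causes no difficulty. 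Beyond that, the proof is a routine chain of norm inequalities, and I expect the write-up to be short.
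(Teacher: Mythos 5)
Your bound on $\beta_{k+1}$ is correct and matches the paper's argument. But your bound on $\alpha_{k+1}$ has a genuine gap, and you came within a hair of spotting it yourself. You write
\[
\alpha_{k+1} \le \|u_{k+1}\|_1\|v_{k+1}\|_1 \le \|u_k\|_1\|v_k\|_1\Bigl(1+\tfrac{\beta_k}{1-\alpha_k}\Bigr)^2,
\]
and then need $\|u_k\|_1\|v_k\|_1 \le \alpha_k$ to close the argument. You correctly notice that the ``identity'' $\|uv^T\|_1 = \|u\|_1\|v\|_1$ is false (the true one is $\|uv^T\|_1 = \|u\|_1\|v\|_\infty$), and you replace it by the inequality $\|uv^T\|_1 \le \|u\|_1\|v\|_1$ --- but that gives $\alpha_k \le \|u_k\|_1\|v_k\|_1$, which is the \emph{wrong direction}. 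Your chain therefore does not terminate at $\alpha_k(1+\tfrac{\beta_k}{1-\alpha_k})^2$, and the claim that this ``causes no difficulty'' is mistaken.

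There are two standard ways to repair it. The paper's way: expand $u_{k+1}v_{k+1}^T$ into its four cross terms, each of which contains the rank-one block $u_kv_k^T$ as a factor, and bound the $1$-norm of each term by $\alpha_k$ times the appropriate submultiplicative factors ($\tfrac{\beta_k}{1-\alpha_k}$ from $c_kF_k(I-u_kv_k^T)^{-1}$, or $\tfrac{\beta_k}{1-\alpha_k}$ from $(1-v_k^Tu_k)^{-1}c_kF_k$). Summing gives $\alpha_k + 2\alpha_k\tfrac{\beta_k}{1-\alpha_k} + \alpha_k\bigl(\tfrac{\beta_k}{1-\alpha_k}\bigr)^2 = \alpha_k(1+\tfrac{\beta_k}{1-\alpha_k})^2$ exactly. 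Alternatively, you can rescue your own route by using the correct rank-one identity $\alpha_{k} = \|u_{k}\|_1\|v_{k}\|_\infty$ and pairing your bound $\|u_{k+1}\|_1 \le \|u_k\|_1(1+\tfrac{\beta_k}{1-\alpha_k})$ with the $\infty$-norm bound $\|v_{k+1}\|_\infty \le \|v_k\|_\infty(1+\tfrac{\beta_k}{1-\alpha_k})$, which follows from the recursion for $v_{k+1}$ together with $\|F_k^T\|_\infty = \|F_k\|_1$; multiplying these two then lands precisely on $\alpha_k(1+\tfrac{\beta_k}{1-\alpha_k})^2$. As written, though, your argument does not go through.
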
 

\begin{proof}
Observe that 
 \begin{align}\label{betak1}
 \beta_{k+1} &= \|c_{k+1}F_{k+1}\|_1\nonumber \\
 & = \|(1-v_k^Tu_k)^{-1}c_kF_k(I-u_kv^T_k)^{-1}c_kF_k\|_1\nonumber\\
 & \le |(1-v_k^Tu_k)^{-1}|\beta_k^2\|(I-u_kv^T_k)^{-1}\|_1.
 \end{align}
 
If $\|u_kv_k^T\|_1<1$, we have
 \begin{equation}\label{eq:alpha11}
 \|(I-u_kv^T_k)^{-1}\|_1=\|\sum_{i=0}^{\infty}(u_kv_k^T)^i\|_1\leq \sum_{i=0}^{\infty}\alpha_k=\frac{1}{1-\alpha_k}.
 \end{equation}
 On the other hand,  one can verify  that 

  \begin{equation}\label{eq:uvk}
 \|u_kv_k^T\|_1=\|u_k\|_{1}\|v_k\|_{\infty}\geq |v^T_ku_k|,
 \end{equation}
 from which we obtain 
 \begin{equation}\label{eq:alpha2}
|(1-v_k^Tu_k)^{-1}|\leq \frac{1}{1-|v_k^Tu_k|}\leq \frac{1}{1-\alpha_k}. 
 \end{equation}
 It follows from \eqref{betak1}, \eqref{eq:alpha11} and \eqref{eq:alpha2} that
 \[
\beta_{k+1} \leq \big(\frac{\beta_k}{1-\alpha_k}\big)^2.  
 \]
 On the other hand, we have from \eqref{eq:alpha11} and \eqref{eq:alpha2} that 
  \begin{align*}
 \alpha_{k+1}&= \|u_{k+1}v^T_{k+1}\|_1 \\
 &= \|u_kv^T_k + (1-v^T_ku_k)^{-1}u_kv^T_kc_kF_k + c_kF_k(I-u_kv^T_k)^{-1}u_kv^T_k \\
 &+ c_kF_k(I-u_kv^T_k)^{-1}u_kv^T_k(1-v^T_ku_k)^{-1}c_kF_k\|_1\\ 
 &\le \alpha_k + 2\alpha_k\frac{\beta_k}{1-\alpha_k} + \alpha_k \big (\frac{\beta_k}{1-\alpha_k}\big)^2 \\
 &= \alpha_k\big(1+\frac{\beta_k}{1-\alpha_k}\big)^2. 
 \end{align*}
The proof is complete. 
\end{proof}

In what follows, we show that $\|u_kv^T_k\|_1<1$ is always true  for $k=0,1,2\ldots$, which, together with \eqref{eq:uvk}, implies that $1-v_k^Tu_k\ne 0$ for $k=0,1,2,\ldots$.

 \begin{lemma}\label{alphabeta}
 	Let $\alpha_k = \|u_kv^T_k\|_1$, $\beta_k = \|c_kF_k\|_1$, then it holds 
 	
 	 \begin{align}\label{eq:abeta}
 		\alpha_k <\Big (\frac{2^k}{2^k+1}\Big)^2	\ {\rm and}\  \beta_k < \frac{1}{(2^k+1)^2}.
 		\end{align}
\end{lemma}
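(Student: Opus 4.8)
The plan is to prove the two bounds in \eqref{eq:abeta} simultaneously by induction on $k$, using Lemma \ref{lem:abeta} as the engine for the inductive step. The base case $k=0$ requires checking $\alpha_0 = \|u_0 v_0^T\|_1 < 1/4$ and $\beta_0 = \|c_0 F_0\|_1 < 1$. Here $u_0 = A^{-1}b$, $v_0 = A^{-T}\one$, $c_0 = \one^T A^{-1}b$ and $F_0 = A^{-1}$. Since $\|u_0 v_0^T\|_1 = \|u_0\|_1 \|v_0\|_\infty$, and $A = (2-\|a\|_{\w})I - T_n(a)$ with $\|a\|_{\w}<1$, I would use the bound \eqref{norm1}, namely $\|A^{-1}\|_1 \le \frac{1}{2-2\|a\|_{\w}}$ (and the analogous $\infty$-norm bound, which is equal by symmetry of the estimate), together with $\|b\|_1 < (1-\|a\|_{\w})^2$ from \eqref{eq:ba}. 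This gives $\|u_0\|_1 = \|A^{-1}b\|_1 \le \frac{\|b\|_1}{2-2\|a\|_{\w}} < \frac{(1-\|a\|_{\w})^2}{2-2\|a\|_{\w}} = \frac{1-\|a\|_{\w}}{2} < \frac12$, and $\|v_0\|_\infty = \|A^{-T}\one\|_\infty \le \|A^{-1}\|_1 \le \frac{1}{2-2\|a\|_{\w}} \le \frac12$, so $\alpha_0 < 1/4 = (2^0/(2^0+1))^2$. For $\beta_0$: $\|c_0 F_0\|_1 = |c_0| \|A^{-1}\|_1 \le \|A^{-1}b\|_1 \cdot \|A^{-1}\|_1 < \frac12 \cdot \frac12 = \frac14 < 1 = 1/(2^0+1)^2$.

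For the inductive step, assume $\alpha_k < \left(\tfrac{2^k}{2^k+1}\right)^2$ and $\beta_k < \tfrac{1}{(2^k+1)^2}$. In particular $\alpha_k < 1$, so Lemma \ref{lem:abeta} applies and yields $\beta_{k+1} \le \left(\tfrac{\beta_k}{1-\alpha_k}\right)^2$ and $\alpha_{k+1} \le \alpha_k\left(1+\tfrac{\beta_k}{1-\alpha_k}\right)^2$. The key quantity is $\tfrac{\beta_k}{1-\alpha_k}$. Using the inductive hypotheses, $1-\alpha_k > 1 - \left(\tfrac{2^k}{2^k+1}\right)^2 = \tfrac{2^{k+1}+1}{(2^k+1)^2}$, hence
\[
\frac{\beta_k}{1-\alpha_k} < \frac{1/(2^k+1)^2}{(2^{k+1}+1)/(2^k+1)^2} = \frac{1}{2^{k+1}+1}.
\]
Then $\beta_{k+1} \le \left(\tfrac{\beta_k}{1-\alpha_k}\right)^2 < \tfrac{1}{(2^{k+1}+1)^2}$, which is exactly the desired bound for $\beta$. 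For $\alpha$, $\alpha_{k+1} \le \alpha_k\left(1+\tfrac{\beta_k}{1-\alpha_k}\right)^2 < \left(\tfrac{2^k}{2^k+1}\right)^2 \left(1 + \tfrac{1}{2^{k+1}+1}\right)^2 = \left(\tfrac{2^k}{2^k+1} \cdot \tfrac{2^{k+1}+2}{2^{k+1}+1}\right)^2 = \left(\tfrac{2^k}{2^k+1} \cdot \tfrac{2(2^k+1)}{2^{k+1}+1}\right)^2 = \left(\tfrac{2^{k+1}}{2^{k+1}+1}\right)^2$, completing the induction.

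The main obstacle is really just bookkeeping: one must verify that Lemma \ref{lem:abeta}'s hypothesis $\|u_k v_k^T\|_1 < 1$ holds at each stage, but this is immediate from $\alpha_k < \left(\tfrac{2^k}{2^k+1}\right)^2 < 1$, and the algebraic identity $\tfrac{2^k}{2^k+1}\left(1 + \tfrac{1}{2^{k+1}+1}\right) = \tfrac{2^{k+1}}{2^{k+1}+1}$ is the one computation that makes everything click — it is worth isolating and stating cleanly. I would also remark afterward that, combined with \eqref{eq:uvk}, the bound $\alpha_k < 1$ guarantees $1 - v_k^T u_k \ne 0$ for all $k$, so by Lemma \ref{lem:dabook} the doubling sequences in \eqref{EPFQ2} are well-defined for all $k \ge 0$, and that $\beta_k \to 0$ at rate (at least) $1/2$ per step, which is the source of the claimed linear convergence.
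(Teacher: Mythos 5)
Your proof takes essentially the same route as the paper: verify $\alpha_0, \beta_0 < 1/4$ from the bounds $\|A^{-1}\|_1 \le \frac{1}{2-2\|a\|_{\w}}$ and $\|b\|_1 < (1-\|a\|_{\w})^2$, then close the induction via Lemma \ref{lem:abeta} and the key estimate $\frac{\beta_k}{1-\alpha_k} < \frac{1}{2^{k+1}+1}$. One small slip in the base case: you write $\frac14 < 1 = 1/(2^0+1)^2$, but $1/(2^0+1)^2 = \frac14$, not $1$; since you have already shown $\beta_0 < \frac14$, this is exactly the required bound and the argument stands.
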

 
 \begin{proof}
 Observe that $$\|A^{-1}\|^{2}_1\|b\|_1 \le \frac{\|b\|_1}{(2-2\|a\|_{\w})^2} < \frac{(1-\|a\|_{\w})^2}{(2-2\|a\|_{\w})^2}=\frac{1}{4},$$
from which  we get $\alpha_0 \le \|A^{-1}\|^{2}_1\|b\|_1 <\frac{1}{4}$ and   $\beta_0 \le \|A^{-1}\|^{2}_1\|b\|_1 <\frac{1}{4}$. Hence, \eqref{eq:abeta} holds true for $k=0$.
  
Assume  $\alpha_k <\big (\frac{2^k}{2^k+1}\big )^2$ and $\beta_k < \frac{1}{(2^k+1)^2}$, then according to \eqref{eq:abeta1}
we have 
 
\begin{align*}
    \alpha_{k+1} \le \alpha_k\Big (1+\frac{\beta_k}{1-\alpha_k}\Big )^2
                  = \Big (\frac{2^k}{2^k+1}\cdot\frac{2(2^k+1)}{1+2^{k+1}}\Big )^2= \Big (\frac{2^{k+1}}{2^{k+1}+1}\Big)^2,
\end{align*}
and 
\[
\beta_{k+1} \le \Big (\frac{\beta_k}{1-\alpha_k}\Big )^2 < \frac{1}{(2^{k+1}+1)^2}.
\] 
Hence, the inequalities in  \eqref{eq:abeta} are always satisfied for $k=0,1,\ldots$. 
\end{proof}

 According to  \eqref{eq:abeta}, we can see that $\|u_kv_k^T\|_1<1$ holds for $k=0, 1,2,\ldots$, implying that  $I - u_kv_k^T$ is nonsingular. It follows from \eqref{eq:uvk} that 
 \begin{equation}\label{eq:vu}
 1-v^T_ku_k\ne 0,
 \end{equation}
  so that  
the sequences $\{c_k\}, \{u_k\}, \{v_k\}$ and $\{F_k\}$ are well-defined. Moreover, we have 
 
 \begin{theorem}\label{thm:nonnegative}
 The sequences $\{c_k\}$, $\{u_k\}$, $\{v_k\}$ and $\{F_k\}$ generated by \eqref{EPFQ2}, with $c_0=\one^TA^{-1}b$, $u_0=A^{-1}b$, $v_0=A^{-T}\one$ and $F_0=A^{-1}$, are nonnegative sequences.
 \end{theorem}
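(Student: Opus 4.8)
The plan is to argue by induction on $k$, establishing simultaneously that all four iterates are nonnegative and that the auxiliary quantities appearing in the update formulas \eqref{EPFQ2} behave as needed. The base case is immediate: from the initial data $c_0=\one^TA^{-1}b$, $u_0=A^{-1}b$, $v_0=A^{-T}\one$ and $F_0=A^{-1}$, and the fact that $A$ is an invertible $M$-matrix (so $A^{-1}\geq 0$) together with $b\geq 0$ and $\one\geq 0$, we get $c_0\geq 0$, $u_0\geq 0$, $v_0\geq 0$ and $F_0\geq 0$. Moreover $c_0$ is a scalar, $F_0$ is a matrix, and $\beta_0=\|c_0F_0\|_1<1/4$ by the estimate already obtained in the proof of Lemma \ref{alphabeta}.

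For the inductive step, assume $c_k\geq 0$, $u_k\geq 0$, $v_k\geq 0$, $F_k\geq 0$ and (crucially) $\alpha_k=\|u_kv_k^T\|_1<1$, which we may invoke from Lemma \ref{alphabeta}. The key observation is that $\|u_kv_k^T\|_1<1$ forces $v_k^Tu_k<1$ by \eqref{eq:uvk}, hence $(1-v_k^Tu_k)^{-1}>0$ is a positive scalar; and since $u_kv_k^T\geq 0$ with spectral radius bounded by its $1$-norm, which is $<1$, the Neumann series gives $(I-u_kv_k^T)^{-1}=\sum_{i=0}^\infty (u_kv_k^T)^i\geq 0$. Now every term on the right-hand side of each line of \eqref{EPFQ2} is a product of nonnegative factors: $c_{k+1}=c_k(1-v_k^Tu_k)^{-1}c_k$ is a product of two nonnegative scalars and a positive scalar, hence $\geq 0$; $u_{k+1}=u_k+c_kF_k(I-u_kv_k^T)^{-1}u_k$ adds to the nonnegative vector $u_k$ the product of the nonnegative scalar $c_k$, the nonnegative matrix $F_k$, the nonnegative matrix $(I-u_kv_k^T)^{-1}$, and the nonnegative vector $u_k$; similarly $v_{k+1}\geq 0$ using $F_k^T\geq 0$; and $F_{k+1}=F_k(I-u_kv_k^T)^{-1}F_k\geq 0$ as a product of nonnegative matrices. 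This closes the induction.

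The main point to get right — and the only place where anything beyond bookkeeping is needed — is the nonnegativity of the inverses $(1-v_k^Tu_k)^{-1}$ and $(I-u_kv_k^T)^{-1}$, which is exactly why the uniform bound $\alpha_k<(2^k/(2^k+1))^2<1$ from Lemma \ref{alphabeta} is indispensable: without it the Neumann series need not converge and the sign of the scalar inverse is not controlled. Everything else is the observation that the SDA recursion \eqref{EPFQ2} is built entirely from sums and products, with no subtractions, once these two inverses are known to be entrywise nonnegative. I would therefore state at the outset that the induction hypothesis includes the conclusion of Lemma \ref{alphabeta} (so that $\alpha_k<1$ is available at every step), and then the argument is a short, purely sign-based verification line by line through \eqref{EPFQ2}.
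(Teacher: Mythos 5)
Your proof is correct and follows essentially the same inductive route as the paper: nonnegativity of the initial data from $A^{-1}\geq 0$ and $b\geq 0$, then at each step invoking $\alpha_k=\|u_kv_k^T\|_1<1$ (Lemma \ref{alphabeta}) to conclude $(1-v_k^Tu_k)^{-1}>0$ and $(I-u_kv_k^T)^{-1}\geq 0$, after which every update in \eqref{EPFQ2} is a sum of products of nonnegative factors. The only cosmetic difference is that you justify $(I-u_kv_k^T)^{-1}\geq 0$ via the Neumann series, whereas the paper phrases it as $I-u_kv_k^T$ being a nonsingular $M$-matrix; these are equivalent here.
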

 
 \begin{proof}
Recall that $A$ is a nonsingular $M$-matrix, so that $A^{-1}\geq 0$. Since $b$ is a nonnegative vector, it can be seen that $c_0, u_0, v_0$ and $F_0$ are nonnegative. Suppose  $\{c_k\}$, $\{u_k\}$, $\{v_k\}$ and $\{F_k\}$  are nonnegative sequences. Then
it follows from $\|u_kv_k^T\|_{\infty}<1$ that $I-u_kv_k^T$
is a nonsingular $M$-matrix, so that  $(I-u_kv_k^T)^{-1}\geq 0$, from which we obtain that $u_{k+1}$ and $F_{k+1}$ are nonnegative. 

On the other hand, we have from \eqref{eq:uvk} that $v^T_ku_k=|v_k^Tu_k|\leq \|u_kv_k^T\|_1<1$, so that $(1-v_k^Tu_k)^{-1}\geq 0$, from which it follows that $c_{k+1}\geq 0$ and $v_{k+1}\geq 0$. 
 \end{proof}

In view of Lemma \ref{lem:dabook} and  inequality \eqref{eq:vu}, we know that  
the structure-preserving doubling algorithm can be applied in computing $x_*$ that satisfies equation \eqref{SSF-I}, and we obtain 
\begin{align}\label{mu^*}
 			\left(\begin{array}{cc}c_k &0 \\-u_k &I\end{array}\right) \left(\begin{array}{cc}1\\x_*\end{array}\right) =  \left(\begin{array}{cc}1 &-v^T_k\\0 &F_k\end{array}\right)\left(\begin{array}{cc}1\\x_*\end{array}\right){\mu}_*^{2^k},  
 	\end{align}
 where  $\{c_k\}$, $\{u_k\}$, $\{v_k\}$ and $\{F_k\}$ are generated by \eqref{EPFQ2}.  We show in the following theorem  the convergence of the  sequence $\{u_k\}$  to $x_*$. 
 
 \begin{theorem}\label{thm:convergence}
 	Suppose $x_*$ is the unique solution of equation \eqref{eq:equation1} such that $\|x_*\|_1<1-\|a\|_{\w}$,  then the sequence $\{u_k\}$ generated by \eqref{EPFQ2} converges to $x_*$ and the convergence is at least R-linear with rate $1/2$.
 \end{theorem}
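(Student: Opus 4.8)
The plan is to read off from the doubling relation \eqref{mu^*} an explicit closed form for $u_k$ and then estimate the discrepancy $\|u_k-x_*\|_1$ using the bounds already available on $c_k$, $v_k$, $F_k$, and $\mu_*$. Expanding the first row of \eqref{mu^*} gives $c_k=\mu_*^{2^k}$, which, combined with Lemma \ref{alphabeta} ($\beta_k=\|c_kF_k\|_1<(2^k+1)^{-2}$) shows $c_k\to 0$ very fast — indeed $\mu_*<\beta<1$, so $c_k=\mu_*^{2^k}$ already decays doubly exponentially. Expanding the second row gives $-u_k+F_kx_*\mu_*^{2^k}=-x_*$, i.e.
\begin{equation}\label{eq:ukform}
x_*-u_k = -\mu_*^{2^k}F_kx_*,
\end{equation}
which is the key identity. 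Wait — I should double check the sign/row bookkeeping against \eqref{SSF-I} and \eqref{W^2^k}; the precise statement to extract is that the vector $(1,x_*^T)^T$ satisfies $N_k(1,x_*^T)^T=K_k(1,x_*^T)^T\mu_*^{2^k}$, whose bottom block reads $-u_k+x_* = F_k x_*\,\mu_*^{2^k}$, giving \eqref{eq:ukform} up to a sign I will fix when writing out the block multiplication.

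From \eqref{eq:ukform} the estimate is immediate: $\|x_*-u_k\|_1 \le \mu_*^{2^k}\,\|F_k\|_1\,\|x_*\|_1$. To control $\|F_k\|_1$ I would use $\beta_k=\|c_kF_k\|_1=c_k\|F_k\|_1$ (since $c_k\ge0$ and $F_k\ge0$ by Theorem \ref{thm:nonnegative}, the product is nonnegative and the $1$-norm is multiplicative in this rank bookkeeping — more carefully, $\|c_kF_k\|_1=c_k\|F_k\|_1$ because $c_k$ is a nonnegative scalar), together with $c_k=\mu_*^{2^k}$, to get $\|F_k\|_1=\beta_k/\mu_*^{2^k}$. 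Substituting back,
\begin{equation}\label{eq:final}
\|x_*-u_k\|_1 \le \mu_*^{2^k}\cdot\frac{\beta_k}{\mu_*^{2^k}}\cdot\|x_*\|_1 = \beta_k\|x_*\|_1 < \frac{\|x_*\|_1}{(2^k+1)^2},
\end{equation}
using Lemma \ref{alphabeta}. This bound tends to $0$, proving convergence $u_k\to x_*$, and since $(2^{k+1}+1)^{-2}/(2^k+1)^{-2}\to 1/4$, while passing from step $k$ to step $k+1$ squares the "doubling" index, the error at consecutive steps satisfies $\|x_*-u_{k+1}\|_1 \lesssim \|x_*-u_k\|_1^2/\|x_*\|_1$ only heuristically; the honest linear-with-rate-$1/2$ claim follows because going from $2^k$ to $2^{k+1}$ in the exponent is one doubling step, so the natural "iteration count" is $k$ and $\beta_k<(2^k+1)^{-2}$ behaves like $4^{-k}$, giving ratio $\approx 1/4$ per doubling step — I will phrase the conclusion as: the sequence converges and $\limsup_k \|x_*-u_k\|_1^{1/2^k} \le 1/2$ is not quite it either; rather the clean statement is that the error is $O(4^{-k})$, which is "linear with rate $1/2$" when each doubling step is counted once since $(1/2)^{2k}$...

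Let me restate that last point cleanly, since it is the only delicate part. The convergence rate claim "at least linear with rate $1/2$" for a doubling-type algorithm conventionally means: after $k$ doubling steps the error is bounded by $C\,r^{2^k}$ for some $r<1$, equivalently each step roughly squares the error. Here \eqref{eq:final} gives $\|x_*-u_k\|_1<\|x_*\|_1(2^k+1)^{-2}$, and one checks directly $(2^{k+1}+1)^{-2} < \big((2^k+1)^{-2}\big)^{?}$ — actually the cleanest route is to observe $\beta_k < \beta_{k-1}^2/(1-\alpha_{k-1})^2$ from \eqref{eq:abeta1}, and $1-\alpha_{k-1} > (2^{k-1}+1)^{-2}\cdot(\text{stuff})$... so I would instead simply state $\|x_*-u_{k+1}\|_1 \le \|x_*-u_k\|_1 \cdot \tfrac{(2^k+1)^2}{(2^{k+1}+1)^2}$ and note $\tfrac{(2^k+1)^2}{(2^{k+1}+1)^2}\to \tfrac14<\tfrac12$, hence for $k$ large the error contracts by a factor less than $1/2$ each step, which is "at least linear with rate $1/2$". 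The main obstacle is purely bookkeeping: getting the block multiplication in \eqref{mu^*} right so that \eqref{eq:ukform} has the correct form, and then matching the decay bound $\beta_k<(2^k+1)^{-2}$ to the paper's phrasing of the convergence rate; there is no analytic difficulty once \eqref{eq:ukform} and Lemma \ref{alphabeta} are in hand.
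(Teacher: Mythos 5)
Your block decomposition and the idea of combining Lemma \ref{alphabeta} with the fixed-point relation \eqref{mu^*} is the right strategy and is indeed the paper's strategy, but there is a genuine computational error in reading off the first row that conceals the missing step. Multiplying out \eqref{mu^*}, the top block gives $c_k = (1-v_k^T x_*)\,\mu_*^{2^k}$, not $c_k=\mu_*^{2^k}$: the right-hand side's first row is $(1,\,-v_k^T)(1,\,x_*^T)^T\mu_*^{2^k}$, and the off-diagonal $-v_k^T$ contributes. (The bottom block is fine once you fix the sign: $x_*-u_k = F_k x_*\,\mu_*^{2^k}$.) Your next step, $\|F_k\|_1=\beta_k/c_k$, then picks up the extra factor $1/(1-v_k^T x_*)$, so the estimate becomes
\[
\|x_*-u_k\|_1 \le \mu_*^{2^k}\,\|F_k\|_1\,\|x_*\|_1 = \frac{\beta_k\,\|x_*\|_1}{1-v_k^T x_*},
\]
not $\beta_k\|x_*\|_1$. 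That factor $1/(1-v_k^T x_*)$ is not harmless: $v_k$ is growing under the iteration \eqref{EPFQ2}, so $1-v_k^T x_*$ shrinks, and without a lower bound the right-hand side need not go to zero. Controlling it is precisely what the paper's inductive lemma does: one shows $\|x_* v_k^T\|_1 \le 2^k/(1+2^k)$ for all $k$, so $1-v_k^T x_* \ge 1/(1+2^k)$, and then $\|x_*-u_k\|_1 \le \beta_k(1+2^k)\|x_*\|_1 < \|x_*\|_1/(1+2^k) < 2^{-k}$. This also corrects your rate bookkeeping: the honest bound is $O(2^{-k})$, i.e. linear with rate $1/2$, not $O(4^{-k})$ --- the cancellation between $\beta_k < (2^k+1)^{-2}$ and the inflation factor $(1+2^k)$ from the $c_k$ identity is exactly where one power of $(1+2^k)$ is lost. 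So the conceptual route matches the paper, but you are missing the induction on $\|x_* v_k^T\|_1$, which is the heart of the argument.
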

 
 \begin{proof}
 Observe that a direct computation of \eqref{mu^*} yields 
\begin{equation}\label{eq:ck}
	c_k=(1-v_k^T{x_*}){\mu}_*^{2^k}
\end{equation}
 	\begin{equation}\label{eq:pkx}
 		{x_*}-u_k=F_k{x_*}{\mu}_*^{2^k}. 
 	\end{equation}

We prove by induction that for $k=0,1,2,\ldots$
\begin{equation}\label{eq:vx}
\|{x_*}v_k^T\|_1\leq \frac{2^k}{1+2^k}.
\end{equation}

For $k=0$, we have 
\begin{align*}
\|{x_*}v_0^T\|_1=\|{x_*}\one ^TA^{-1}\|_1
\leq \|A^{-1}\|_1\|{x_*}\|_1
<\frac{1-\|a\|_{\w}}{2(1-\|a\|_{\w})}=\frac12,
\end{align*}
so that inequality \eqref{eq:vx} holds true for $k=0$. Suppose  	it holds for $k=i$, that is, $\|x_*v_i^T\|_1\leq \frac{2^i}{1+2^i}$,  then for $i+1$, we have
\[
\begin{aligned}
\|{x_*}v_{i+1}^T\|_1&=\|{x_*}(v_i^T+c_i(1-v_i^Tu_i)^{-1}v_i^TF_i)\|_1\\
&=\|{x_*}v_i^T+(1-v_i^Tu_i)^{-1}{x_*}v_i^T(c_iF_i)\|_1\\
&\leq \|{x_*}v_i^T\|_1+\frac{\beta_i}{1-\alpha_i}\|{x_*}v_i^T\|_1\\
&\leq \Big(1+\frac{1}{2^{i+1}+1 }\Big)\frac{2^i}{1+2^i}\\
&= \frac{2^{i+1}}{1+2^{i+1}},
\end{aligned}
\]
where the first inequality follows from \eqref{eq:uvk} and the second inequality follows from \eqref{eq:abeta}. Hence, $\|{x_*}v_k^T\|_1\leq \frac{2^k}{1+2^k}$ holds for $k=0,1,2,\ldots$. 

On the other hand,  analogously to \eqref{eq:uvk}, one can see that  $v_k^T{x_*}\leq |v_k^T{x_*}|\leq \|{x_*}v_k^T\|_1$, so that   $v_k^T{x_*}\leq \frac{2^k}{1+2^k}$. Then it follows from \eqref{eq:ck} that  ${\mu}_*^{2^k}\leq {c_k}(2^k+1)$, and together with \eqref{eq:abeta} and \eqref{eq:pkx}, we have
\begin{align}\label{eq:xuk}
\|{x_*}-u_k\|_1&\leq {\beta_k}(1+2^k)\|{x_*}\|_1\leq \frac{ \|x_*\|_1(1+2^k)}{(1+2^k)^2} < \frac{1}{2^k}.
\end{align}
Hence, we obtain $\lim_{k\rightarrow \infty}\sqrt[k]{\|{x_*}-u_k\|_1}\leq \frac{1}{2}$, that is, the sequence $\{u_k\}$ converges to ${\mu_*}$ at least  R-linearly with rate $1/2$. 
\end{proof}

 Theorem \ref{thm:convergence} shows that the structure-preserving doubling algorithm has a convergence which is at least R-linear with rate $1/2$. However, in the numerical experiments, we observe that the convergence is linear for the first few steps while a quadratic convergence occurs later.


\section{Numerical experiments}
In this section,  we show by numerical experiments  the effectiveness of the relaxed fixed-point iteration \eqref{para2}, Newton's iteration \eqref{2.2} and the structure-preserving doubling algorithm(SDA) \eqref{EPFQ2} in computing the required solution $x_*$  of equation  \eqref{eq:equation1}. The iteration is  terminated if 
\[\|Ax_k-\mu_kx_k-b\|_1 \le 10^{-15}.\]
The tests were performed in MATLAB/version R2024a on a PC with an Intel(R) Core(TM) i5-9300H CPU @ 2.40GHz processor and 8 GB main memory.

\begin{example}\label{ex1}

We establish a semi-infinite quasi-Toeplitz matrix  $T(a)$ with  $a(z) = \sum_{i = -p}^{q}a_iz^i$,  relying on the package CQT-Toolbox of \cite{BSR}, which is available at https://github.com/numpi/cqt-toolbox. The construction of $T(a) = \tt cqt(a_n,a_p)$ in MATLAB is as follows 
		\[{\tt a_1=rand(p,1);\  a_2=rand(q,1);}\]
		\[{\tt a_1(1) = a_2(1); s=sum(a_1)+sum(a_2);}\]
		\[{\tt a_n=a_1/s;\ 	a_p=a_2/s};\]
	
Then we compute the square root of $I-T(a):=T(1-a)$ by the evaluation{/}interpolation technique. Set $\hat{A} = (2-\|g\|_{\w})I - T(g)$, where $I-T(g)$ is the Toeplitz part of the square root of $I-T(a)$.  Let $\hat{b}$ be an infinite vector whose first $n$ elements are nonzero and it satisfies that $\|\hat{b}\|_1 <(1-\|g\|_{\w})^2$.

Set $p = 3$, $q = 10$, $n = 400$. For $m = 2000$,  we truncate the matrix $\hat{A}$ and vector $\hat{b}$ to an $m$-dimensional matrix $A$ and vector $b$.   For different parameters $\tau$, we apply the relaxed fixed-point iteration $\eqref{para2}$ to calculate the solution $x_*$ of the equation $\eqref{eq:equation1}$. 

\end{example}
	
According to \eqref{eq:tao0},  for different choices of the initial value $\mu_0\in [0,\beta]$, we set the parameter $\tau $ as $\tau(\mu_0)=\frac{1}{1-\|(A-\mu_0  I)^{-2}b\|_1}$.  The number of iterations, CPU time, and relative residuals are reported in Table \ref{table4}. For the case  where $\tau = 1$, the relaxed fixed-point iteration reduces to the fixed-point iteration \eqref{fixed1}. From Tables \ref{table4} and \ref{table5}, we see that when the starting point is set to $\mu_0=0$, the relaxed fixed-point iteration with $\tau=\tau(0)$ is more efficient.

\begin{table}
	\centering
	\begin{tabular}{cccc}
		\toprule
		$\tau$&Number of Iterations&CPU time(seconds) & Residuals \\
		\midrule
		1&18&1.5000&3.0272e-16\\
		$\tau(0)$&11&0.9038&3.1861e-16\\
        $\tau(\frac{\beta}{2})$ &12&1.0044&4.9872e-16\\
		$\tau(\beta)$&59&4.8394&9.7740e-16\\
		\bottomrule
	\end{tabular}
    \caption{\textbf{Comparison of  the relaxed fixed-point iteration \eqref{para2}:  the parameter $\tau$ is chosen according to the choice of the initial points $\mu_0$. } }\label{table4}
\end{table}

\begin{table}
	\centering
	\begin{tabular}{cccc}
		\toprule
		$\tau$&Number of Iterations&CPU time(seconds) & Residuals \\
		\midrule
		0.1&319&26.0767&9.2822e-16\\
		0.5&55&4.6749&9.9874e-16\\
            0.9&23&1.8630&3.3170e-16 \\
		$\tau(0)$&11&0.9038&3.1861e-16\\
            1.5&24&2.0857&7.1183e-16 \\
            1.9&61&4.8924&7.9216e-16 \\
		\bottomrule
	\end{tabular}
    \caption{\textbf{Comparison of  the relaxed fixed-point  iteration \eqref{para2}: the initial points is set to $\mu_0 = 0$, the parameter $\tau$ is chosen randomly. } }\label{table5}
\end{table}

\begin{figure}
\centerline{\includegraphics[width=0.45\textwidth]{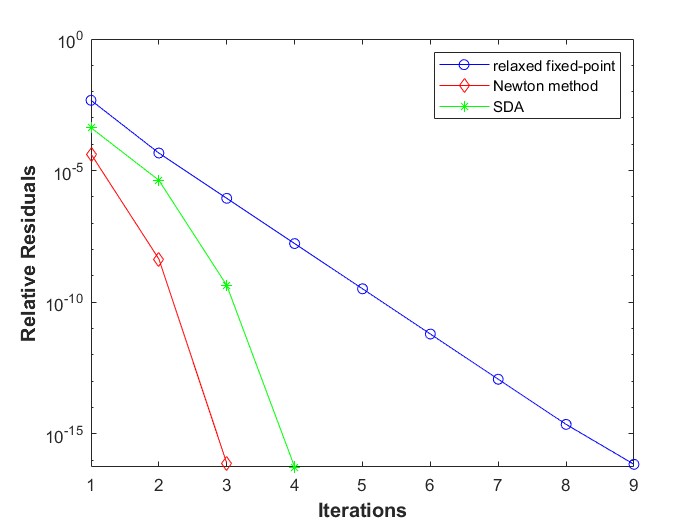}}
\caption{Comparison of Newton's iteration, SDA, and relaxed fixed-point iteration with parameter $\tau(0)$.}
\label{fig:comparision_FNS}
\end{figure}

In Figure \ref{fig:comparision_FNS}, we compare the number of iterations required by the relaxed fixed-point iteration, Newton's iteration and SDA. It can be seen that the number of iterations required by Newton's iteration and SDA is much less
than the number of iterations required by the relaxed  fixed-point iteration.

\begin{example}\label{ex2}
We consider a general $M$-matrix $A$, where $A=(2-\|M\|_{1})I - M$ with $M\geq 0$ and $\|M\|_1<1$. The matrix $M$ is constructed in MATLAB as 
		  \[{\tt M = {{rand(n,n)}}; \ M = M/(norm(M,1)+\delta*norm(M,1) ),}\]
    where $\delta>0$. 
 The vector $b$ is constructed as follow 
\[{\tt b=rand(n,1);\  b=(1-\|M\|_{\infty}-\sigma)^2*b/\|b\|_1. }\]
where $0<\sigma<1-\|M\|_{\infty}$. 
\end{example}

In this example, we set $n = 1000$, $\delta = 0.9$, $\sigma = 0.001$, and $n=100$, $\delta=0.01$, $\sigma=0.00001$, respectively. This way, the matrix 
$A$ is well-conditioned in the first case, but nearly singular in the second. We examine the effectiveness of the relaxed fixed-point iteration \eqref{para2}, Newton's iteration \eqref{2.2} and the structure-preserving doubling algorithm \eqref{EPFQ2} in computing the solution $x_*$ of equation \eqref{eq:equation1} such that $\|x_*\|_1+\|M\|_{1}<1$.

For the relaxed fixed-point iteration, we choose $\tau=\tau(0)=\frac{1}{1-\|A^{-2}b\|_1}$. The number of iterations, relative residuals and CPU time are reported in Table $\ref{table2}$ and Table $\ref{table3}$, respectively.

\begin{table}
	\centering
	\begin{tabular}{cccc}
		\toprule
		Algorithms &Number of Iterations&CPU time(seconds) & Residuals \\
		\midrule
		RFPI&{{51}} &{{0.9317}} & {{5.8414e-16}} \\
		NI&{{6}}&{{0.4963}} & {{2.2542e-16}} \\
        SDA&{{7}} &{{0.7221}}&{{2.2602e-16}} \\
		\bottomrule
	\end{tabular}\caption{\textbf{ Comparison of the relaxed fixed-point iteration (RFPI), Newton's iteration ( NI) and SDA, when $n = 1000$, $\delta = 0.9$, $\sigma = 0.001$.}}\label{table2}
\end{table}

\begin{table}
	\centering
	\begin{tabular}{cccc}
		\toprule
		Algorithms &Number of Iterations&CPU time(seconds) & Residuals \\
		\midrule
		RFPI &{{6}} &{{0.1088}} & {{6.2246e-17}} \\
		NI&{{3}} &{{0.3035}} & {{6.9468e-19}} \\
        SDA&{{4}} &{{0.3733}} & {{6.8145e-19}} \\
		\bottomrule
	\end{tabular}\caption{\textbf{ Comparison of the  relaxed fixed-point iteration (RFPI), Newton's iteration (NI) and SDA, when $n = {100}$, $\delta = {0.01}$, $\sigma = 1\times 10^{-5}$.}}\label{table3}
\end{table}

In the first case, Table \ref{table2} shows that  Newton's iteration is {superior} to both the SDA and the relaxed fixed-point iteration in terms of the number of iterations and CPU time.  In the second case, where the matrix $A$ is nearly singular, we see from Table \ref{table3} that the relaxed-fixed point iteration takes less time than Newton's iteration and SDA. This provides an example where the relaxed fixed-point iteration outperforms the other two methods in terms of efficiency.

\begin{figure}
	\center
\includegraphics[width=0.45\textwidth]{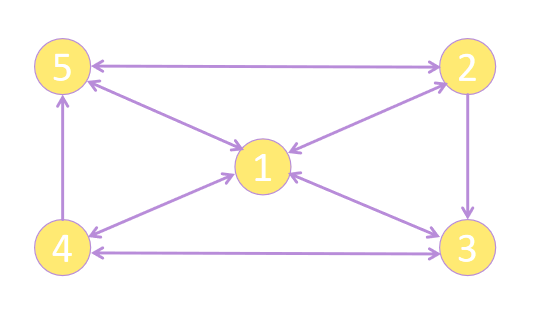}
	\caption{A directed graph with five nodes}\label{graph22}
\end{figure}

\begin{example}\label{L}
   The   Laplacian matrix $L$  of  the directed  graph from Figure \ref{graph22} is 
$$L=\left(\begin{array}{ccccc}4 & -1 & -1 & -1 & -1 \\ -1 & 3 & -1 & 0 & -1 \\ -1 & 0 & 2 & -1 & 0 \\ -1 & 0 & -1 & 3 & -1 \\ -1 & -1 & 0 & 0 & 2\end{array}\right).$$
It can be seen that $L=W-\one v^T$, where  $$W = \left(\begin{array}{ccccc}5 & -1 & -1 & -1 & -1 \\ 0 & 3 & -1 & 0 & -1 \\ 0 & 0 & 2 & -1 & 0 \\ 0 & 0 & -1 & 3 & -1 \\ 0 & -1 & 0 & 0 & 2\end{array}\right)$$
 and  the vector $v = [1,0,0,0,0]^T$. 
\end{example}

According to Section \ref{lap}, we know that $L^{\frac12}=W^{\frac{1}{2}}- \one y^T$, where $y=(W^{-\frac12})^Tv$. We compute $W^{\frac12}$ and  $W^{-\frac12}$ by the DB iteration and obtain 
$$y=[0.4772 \quad 0.1097\quad  0.1667\quad   0.1097\quad   0.1667]^T.$$

\begin{example}
Suppose $A=\begin{pmatrix}3& -1/4 &-1/4& -1/4& -1/4\\
     -1/4 &  3 & -1/4 & -1/4&  -1/4\\
     -1/4 & -1/4 & 3 & -1/4&  -1/4\\
    -1/4 & -1/4 & -1/4 &  3& -1/4\\
    -1/4& -1/4&  -1/4& -1/4 & 3
    \end{pmatrix}$ and $b=[1,0,0,0,0]^T$. We apply Newton's iteration and SDA to equation $Ax-\|x\|_1x=b$. 
\end{example}

According to Theorem \ref{thm:solution2}, this equation has a unique solution. Furthermore, it can be verified that $\mu_* = 1$ is a double root of the corresponding function $g(\mu)=0$. Because of this singularity, both methods drop to a linear convergence rate.  However,  a  difference in stability occurs:  the Newton's iteration achieves a relative error of only $1.7684 \times 10^{-8}$, whereas SDA successfully maintains a much higher precision, achieving a relative error of $9.1093 \times 10^{-14}$.

\section{Conclusions}

We investigated the theoretical and computational properties of a vector equation $Ax-\|x\|_1x=b$, where $A=(2-\|a\|_{\w})I-T_n(a)$ is an invertible $M$-matrix of Toeplitz structure, and $b$ is a nonnegative vector.  We showed that such equation has a unique solution $x_*$ such that $\|x_*\|_1+\|a\|_{\w}<1$ under the condition that $\|b\|_1<(1-\|a\|_{\w})^2$. Such solution is of interest when related to the square root of certain  invertible $M$-matrix with a rank-1 correction.  We proposed a relaxed fixed-point iteration and Newton's iteration for the computation of such solution. Moreover, we showed that a structure-preserving doubling algorithm can be applied in computing the solution, the convergence rate is at least 1/2. 


\section*{Financial Disclosure}

This work has been partially supported by the National Natural Science Foundation of China under grant No. 12201591 and
the National Research Foundation of Korea (NRF) grant funded by the Korea government (MOE) (RS-2023-00240475).

\section*{Conflict of interest}

The authors declare no potential conflict of interests.

\end{document}